\DeclareFontFamily{U}{matha}{\hyphenchar\font45}
\DeclareFontShape{U}{matha}{m}{n}{
<-6> matha5 <6-7> matha6 <7-8> matha7
<8-9> matha8 <9-10> matha9
<10-12> matha10 <12-> matha12
}{}
\DeclareSymbolFont{matha}{U}{matha}{m}{n}
\DeclareFontFamily{U}{mathx}{\hyphenchar\font45}
\DeclareFontShape{U}{mathx}{m}{n}{
<-6> mathx5 <6-7> mathx6 <7-8> mathx7
<8-9> mathx8 <9-10> mathx9
<10-12> mathx10 <12-> mathx12
}{}
\DeclareSymbolFont{mathx}{U}{mathx}{m}{n}
\DeclareMathDelimiter{\vvvert} {0}{matha}{"7E}{mathx}{"17}%
\DeclareMathOperator{\N}{\mathbb{N}}
\DeclareMathOperator{\Z}{\mathbb{Z}}
\DeclareMathOperator{\R}{\mathbb{R}} 
\DeclareMathOperator{\C}{\mathbb{C}}
\DeclareMathOperator{\spl}{\mathfrak{sl}_2(\C)}
\DeclareMathOperator{\so}{\mathfrak{so}(3,1)}
\DeclareMathOperator{\g}{\mathfrak{g}}
\DeclareMathOperator{\id}{Id}
\DeclareMathOperator{\dif}{\mathrm{d}}
\DeclareMathOperator{\Lie}{\mathcal{L}}
\newtheorem{theorem}{Theorem}
\newtheorem{proposition}{Proposition}[section]
\newtheorem{lemma}[proposition]{Lemma}
\newtheorem{corollary}[proposition]{Corollary}
\newtheorem{remark}[proposition]{Remark}
\DeclareDocumentCommand{\norm}{ s m }{%
	\IfBooleanTF{#1}
	{#2}
	{\lVert{#2}\rVert}%
}
\DeclareDocumentCommand{\fnorm}{ s m }{%
	\IfBooleanTF{#1}
	{#2}
	{[]{#2}[]}%
}
\newcommand{\Addresses}{{
  \bigskip
  \footnotesize

Ioan M\u{a}rcu\cb{t},\par\nopagebreak
\textsc{Radboud University Nijmegen, 6500 GL Nijmegen, The Netherlands}\par\nopagebreak
  \textit{E-mail address}: \texttt{i.marcut@math.ru.nl}

\medskip

 Florian Zeiser,\par\nopagebreak
\textsc{University of Illinois at Urbana-Champaign, 61801 Urbana, United States}\par\nopagebreak
\textit{E-mail address}: \texttt{fzeiser@illinois.edu}
}}
\title{The Poisson linearization problem for $\mathfrak{sl}_2(\C)$\\
[1ex] \Large Part II: The Nash-Moser method}
\author{Ioan M\u{a}rcu\cb{t} and Florian Zeiser}
\pgfplotsset{compat=1.16}
\DeclarePairedDelimiter{\oldnormaux}{\bracevert}{\bracevert}
\NewDocumentCommand{\oldnorm}{som}{%
  \IfBooleanTF{#1}
    {\oldnormaux*{#3}}
    {\IfNoValueTF{#2}
       {\oldnormaux*{\vphantom{dq}#3}}
       {\oldnormaux[#2]{#3}}%
    }%
}
\begin{document}
\maketitle

\begin{abstract}This is the second of two papers, in which we prove a version of Conn's linearization theorem for the Lie algebra $\mathfrak{sl}_2(\C)\simeq \so$. Namely, we show that any Poisson structure whose linear approximation at a zero is isomorphic to the Poisson structure associated to $\mathfrak{sl}_2(\C)$ is linearizable. 

In the first part, we calculated the Poisson cohomology associated to $\mathfrak{sl}_2(\C)$, and we constructed bounded homotopy operators for the Poisson complex of multivector fields that are flat at the origin.

In this second part, we obtain the linearization result, which works for a more general class of Lie algebras. For the proof, we develop a Nash-Moser method for functions that are flat at a point.
\end{abstract}

\setcounter{theorem}{3}

\section*{Introduction to Part II}

The Nash-Moser method, developed in \cite{Nash} and \cite{Mosera,Moserb}, is a powerful tool for solving PDEs, which can be understood as an inverse function theorem for non-linear differential operators whose linearization is invertible, but the inverse ``loses derivatives''. 

In order to apply the Nash-Moser method to the problem of linearization for Poisson structures with linear part corresponding to a Lie algebra $\g$, as was done by Conn in the compact semisimple case \cite{Conn85}, one would need linear operators:
\begin{align}\label{eq: homotopy maps:general:Lie algebra}
    \mathfrak{X}^1( \overline{B}_R) \xleftarrow{h^1} \mathfrak{X}^2( \overline{B}_R)\xleftarrow{h^2}\mathfrak{X}^3( \overline{B}_R),
\end{align} 
where $\overline{B}_R\subset \g^*$ is the closed ball of radius $R>0$ around the origin, which satisfying the homotopy equation:
\begin{align}\label{eq: homotopy relation}
    W = h^2\circ \dif_{\pi_{\g}}(W) +\dif_{\pi_{\g}}\circ h^1(W),
\end{align}
for all $W\in \mathfrak{X}^2(\overline{B}_R)$, and, for some fixed $d\in \N_0$, inequalities of the sort: 
\begin{align}\label{eq: tame estimate}
    \norm{h^{i}(W)}_{n,R}\le C\cdot \norm{W}_{n+d,R},
\end{align}
for all $W\in \mathfrak{X}^{i+1}(\overline{B}_R)$, all $n\in \N_0$ and some constants $C=C(n)$. Here $\norm{\cdot}_{n,R}$ are usual $C^n$-norms on multivector fields on $\overline{B}_R$. These types of estimates are standard for applying the Nash-Moser method; see e.g.\ \cite{Ham}, where they are called ``tame'' estimates.  

In Theorem C of Part I of the paper, we constructed explicit homotopy operators for the Lie algebra $\spl$. In contrast to the compact semisimple case from Conn's paper, our operators are defined only on the subcomplex of multivector fields that are flat at the origin: 
\begin{align}\label{eq: homotopy maps}
    \mathfrak{X}^1_0( \mathfrak{sl}_2(\C)^*) \xleftarrow{h^1} \mathfrak{X}^2_0( \mathfrak{sl}_2(\C)^*)\xleftarrow{h^2}\mathfrak{X}^3_0( \mathfrak{sl}_2(\C)^*)
\end{align} 
These still satisfy the homotopy relation \eqref{eq: tame estimate}, for all flat bivector fields $W\in \mathfrak{X}^{i+1}_0( \mathfrak{sl}_2(\C)^*)$.  Moreover, as shown in Part I, they satisfy the SLB property, meaning that they induced homotopy operators on the balls of radius $R>0$:
\begin{align}\label{eq: homotopy maps for r} \mathfrak{X}^1_0(\overline{B}_R) \xleftarrow{h^1} \mathfrak{X}^2_0( \overline{B}_R)\xleftarrow{h^2}\mathfrak{X}^3_0( \overline{B}_R),
\end{align}
which satisfy, for some integers $a,b,c\in \N_0$, estimates of the following type:
\begin{align}\label{eq: property H estimate}
    |h^i(W)|_{n,k,R}\le C\, |W|_{n+a,k+bn+c,R},
\end{align}
for all $W\in \mathfrak{X}^{i+1}(\overline{B}_R)$
and all $n,k\in \N_0$, where $C=C(R,n,k)$ depends continuously on $R$. The double-indexed norms $|\cdot|_{n,k,R}$ are natural to the space of multivector fields that are flat at the origin, and are defined as a type of weighted $C^n$-norms: \begin{align}\label{eq: flat norms 2}
    |W|_{n,k,R}:= \norm{\frac{1}{|x|^k}W}_{n,R},
\end{align}
for $W\in \mathfrak{X}^{i}_0(\overline{B}_r)$. We will show in Subsection \ref{subsec: equivalent seminorms}, that these norms are equivalent to the norms $\fnorm{\cdot}_{n,k,R}$, defined in Part I. 

The main result of Part II is the following linearization result for Poisson structures with linear Taylor expansion: 
\begin{theorem}\label{theorem: rigidity:qualitative}
Let $(\mathfrak{g}^*,\pi_{\mathfrak{g}})$ be the linear Poisson structure associated to the Lie algebra $\mathfrak{g}$. Assume that, for some closed ball $\overline{B}_R\subset \mathfrak{g}^*$ around 0, there exist linear operators on the flat Poisson complex over $\overline{B}_R$, as in \eqref{eq: homotopy maps for r}, satisfying the homotopy relation \eqref{eq: homotopy relation} and the estimates \eqref{eq: property H estimate}, for some integers $a,b,c\in \N_0$. Then for any Poisson structure $\pi$ defined on $\overline{B}_R$ and satisfying
\[ j^{\infty}_0(\pi-\pi_{\mathfrak{g}})=0,\]
there exists $r>0$ and an open Poisson embedding
\[ \Phi: (B_r,\pi)\hookrightarrow (B_R,\pi_{\mathfrak{g}})\]
such that $\Phi(0)=0$ and $j^{\infty}_0(\Phi-\id)=0$.
\end{theorem}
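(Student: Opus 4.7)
My approach is to run a Nash-Moser iteration in the space of flat multivector fields on a nested sequence of balls shrinking to some $\overline{B}_r \subset \overline{B}_R$. Set $W := \pi - \pi_{\mathfrak{g}} \in \mathfrak{X}^2_0(\overline{B}_R)$, and construct $\Phi$ as the limit of compositions $\Phi_n = \phi_n \circ \cdots \circ \phi_0$, where each $\phi_n$ is the time-$1$ flow of the flat vector field
\[
X_n := -h^1(S_{t_n} W_n), \qquad W_n := (\Phi_{n-1})_*\pi - \pi_{\mathfrak{g}},
\]
and $\{S_t\}_{t\ge 1}$ is a family of smoothing operators on $\mathfrak{X}^\bullet_0(\overline{B}_R)$ to be constructed. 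Since every $X_n$ is flat at $0$, each $\phi_n$ fixes the origin to infinite order, so, assuming convergence, the limit $\Phi$ automatically satisfies $\Phi(0)=0$ and $j^\infty_0(\Phi-\id)=0$.

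A single Newton step is then analyzed as follows. Writing $\pi_n = \pi_{\mathfrak{g}} + W_n$ and using Taylor's formula for the pushforward under the flow,
\[
W_{n+1} = \bigl(W_n + \mathcal{L}_{X_n}\pi_{\mathfrak{g}}\bigr) + \mathcal{L}_{X_n}W_n + E_n^{\mathrm{flow}},
\]
with $E_n^{\mathrm{flow}}$ quadratic in $X_n$. The homotopy relation \eqref{eq: homotopy relation} applied to $S_{t_n}W_n$, together with the Jacobi identity $\dif_{\pi_{\mathfrak{g}}}W_n = -\tfrac12[W_n,W_n]$ (which follows from $[\pi_n,\pi_n]=0$), yields
\[
W_n + \mathcal{L}_{X_n}\pi_{\mathfrak{g}} = (W_n - S_{t_n}W_n) - \tfrac12 h^2\bigl([W_n,W_n]\bigr) + h^2\bigl([\dif_{\pi_{\mathfrak{g}}}, S_{t_n}]W_n\bigr).
\]
Hence $W_{n+1}$ splits into a smoothing error, a quadratic homotopy error, a smoothing-commutator error (which can be made subordinate by choosing $S_t$ compatibly with $\dif_{\pi_{\mathfrak{g}}}$), and the flow error, each controllable via \eqref{eq: property H estimate} and standard tame bounds on $S_t$ and on flows.

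The convergence scheme is then a standard Nash-Moser iteration with super-exponential parameters $t_n = \kappa^{\mu^n}$ for $\mu \in (1,2)$ and $\kappa \gg 1$, and shrinking radii $R_n \searrow r > 0$ absorbing the $C^0$-loss of the flows. Choosing appropriate sequences $N_n \nearrow \infty$ (derivative order) and $K_n \nearrow \infty$ (flatness order), one derives a recursion showing that $|W_n|_{N_n,K_n,R_n}$ decays super-exponentially. This yields summability of $\{X_n\}$ in every $C^m$-norm on $\overline{B}_r$, convergence of $\Phi_n$ to a smooth embedding on $B_r$, and, by passing to the limit in $(\Phi_n)_* \pi = \pi_{\mathfrak{g}} + W_{n+1}$, the Poisson condition $\Phi_* \pi = \pi_{\mathfrak{g}}$.

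The main obstacle is the derivative-dependent flatness loss $bn+c$ appearing in \eqref{eq: property H estimate}: applying $h^i$ costs not only $a$ extra derivatives, as in the classical tame setting of Hamilton, but also $bn$ additional orders of flatness that themselves grow with the derivative order. This couples the two indices of the weighted norm $|\cdot|_{n,k,R}$ throughout the iteration, forcing $K_n$ to grow jointly with $N_n$ and requiring the smoothing operators $S_t$ to control both indices simultaneously. A further subtlety is that ordinary mollification does \emph{not} preserve the flat subcomplex $\mathfrak{X}^\bullet_0(\overline{B}_R)$, so one must construct $\{S_t\}$ specifically adapted to the weighted norms. Carrying out this construction, and then balancing the four competing error terms in a single Nash-Moser inequality that closes on itself, will be the technical heart of the proof.
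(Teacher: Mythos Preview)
Your overall architecture matches the paper's: a Nash--Moser iteration on flat multivector fields over shrinking balls, with smoothing operators adapted to the weighted norms $|\cdot|_{n,k,r}$ and the quadratic gain coming from the Maurer--Cartan identity $\dif_{\pi_{\mathfrak g}}W=-\tfrac12[W,W]$. You also correctly identify the central difficulty, namely the coupled loss $k\mapsto k+bn+c$ in \eqref{eq: property H estimate} and the fact that ordinary mollifiers destroy flatness. That said, several of your concrete choices diverge from the paper in ways that matter.

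\textbf{Two-parameter smoothing.} You propose a single family $S_t$ that should ``control both indices simultaneously''. The paper instead builds a \emph{two}-parameter family $S_{t,s,r}=T_{s,r}\circ S_{t,r}$: the factor $S_{t,r}$ smooths in the derivative index $n$ (constructed by conjugating Nash's Schwartz-space smoothing through the inversion $x\mapsto x/|x|^2$, which exchanges flatness at $0$ with Schwartz decay at $\infty$), while $T_{s,r}$ is multiplication by a cutoff $\chi(s|x|)$ and smooths in the weight index $k$. The two parameters are run at different rates ($t_i^{12p}$ versus $t_i$) precisely to absorb the $bn$ term in \eqref{eq: property H estimate}. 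A one-parameter family is unlikely to close the iteration.

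\textbf{Order of smoothing and homotopy.} You set $X_n=-h^1(S_{t_n}W_n)$; the paper uses $X_i=S_i(h^1(Z_i))$. With your order the linearized remainder acquires the commutator term $h^2([\dif_{\pi_{\mathfrak g}},S_{t_n}]W_n)$, which you then have to argue is subordinate. With the paper's order the remainder is simply $\dif_{\pi_{\mathfrak g}}\big((I-S_i)h^1(Z_i)\big)-\tfrac12 h^2([Z_i,Z_i])$, and no commutator appears; this is not merely cosmetic, since it removes one place where the double loss of \eqref{eq: property H estimate} could re-enter.

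\textbf{Fixed versus growing norm indices.} You let $N_n,K_n\nearrow\infty$ along the iteration. The paper does the opposite: it fixes three index pairs $(x_a,y_a),(x_b,y_b),(x_c,y_c)$ and proves the recursive inequalities $(a_i),(b_i),(c_i)$ at those fixed levels; only afterwards, in a separate bootstrap, does it upgrade to arbitrary $(n,k)$. The point is that all constants $C(n,k)$ in \eqref{eq: property H estimate}, in the smoothing estimates, and in the interpolation inequalities depend on $(n,k)$, so a scheme with moving indices would have to control that dependence uniformly, which you do not address.

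\textbf{Initial smallness and rescaling.} Nash--Moser needs $W_0$ small in a fixed norm to start. You do not say how this is achieved. The paper first proves a quantitative version (Theorem~\ref{theorem: rigidity}) assuming $\|\pi-\pi_{\mathfrak g}\|_{N,R}\le\delta(r)$, and then deduces Theorem~\ref{theorem: rigidity:qualitative} by rescaling: $\pi_t:=t\cdot m_t^*\pi$ satisfies $\pi_0=\pi_{\mathfrak g}$ and hence is close to $\pi_{\mathfrak g}$ for small $t$, and linearity of $\pi_{\mathfrak g}$ lets one undo the rescaling afterwards. The same rescaling also produces homotopy operators on every ball $\overline{B}_r$ from the ones on $\overline{B}_R$ (Remark~\ref{remark: homotopy ops}), which your iteration implicitly needs but your hypotheses only supply on $\overline{B}_R$.
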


In Section \ref{section: flat linearization} we will use Theorem C from Part I, Theorem \ref{theorem: rigidity:qualitative}, and Weinstein's formal linearization theorem for semisimple Lie algebras \cite{Wein83}, to deduce Theorem A in Part I, i.e., that $\spl$ is a Poisson non-degenerate Lie algebra. Of course, the same argument implies Conn's theorem \cite{Conn85}, and can be used for any semisimple Lie algebra $\g$ for which the Poisson linearization question is still open (see the Introduction to Part I), provided the second Poisson cohomology of $\g^*$ vanishes and one can construct homotopy operators on its flat Poisson complex that satisfy estimates of the type \eqref{eq: property H estimate}. 

\medskip

We will use the Nash-Moser method to prove Theorem \ref{theorem: rigidity}, which is a quantitative and stronger version of Theorem \ref{theorem: rigidity:qualitative}. For this, we had to make specific adaptation of the method, which we explain here. First, note that the estimates \eqref{eq: property H estimate} together with the trivial estimates $|\cdot|_{n,k,r}\leq \norm{\cdot}_{n+k,r}$ (Corollary \ref{corollary: degree shift}) imply that, for some $p$ and $d$, our operators satisfy:
\begin{align}\label{eq: bad estimate}
    \norm{h^{i}(W)}_{n,r} \le C\cdot \norm{W}_{p\cdot n +d,r}.
\end{align} 
It has been shown in \cite{LoZe} that the Nash-Moser method still works under these assumptions for $p<2$, and that the method fails for $p\geq 2$, which unfortunately is the case for us. In order to make better use of inequalities \eqref{eq: property H estimate}, note that the loss of derivatives is ``tame'' in $n$, but the price we have to pay is accumulated in the weight $k$. We manage to use the Nash-Moser method as in \cite{LoZe} for the estimates of the form \eqref{eq: property H estimate} by using the following idea. For a large $q$, if we consider the family of norms 
\[ \oldnorm{W}_{n,r}:=|W|_{n,q\cdot n,r},\]
then the estimates \eqref{eq: property H estimate} imply estimates of type \eqref{eq: bad estimate}:
\[\oldnorm{ h^{i}(W)}_{n,r} \le C\cdot \oldnorm{ W}_{p\cdot n +d,r}, \]
where $p\in (1,2)$, as in \cite{LoZe} (e.g.\ $p=\tfrac{3}{2}$, for $q=2b$, $d=a+c$). 

The other crucial ingredient in the Nash-Moser method is the use of a family of smoothing operators $\{S_{t}\}_{t>1}$ which approximate the identity operator when $t\to \infty$. In the classical setting, such operators are obtained by convolution with mollifiers, and their construction goes back to the work of Nash \cite{Nash}. However, the smoothing operators we found in the literature do not preserve the space of flat functions. We managed to construct smoothing operators which do preserve flatness by making use of the following observation: under the inversion map, $x\mapsto x/|x|^2$, flatness at $0$ corresponds to Schwartz-like behavior at infinity. In Appendix \ref{section: functional analysis} we build a 2-parametric family of smoothing operators corresponding to the 2-parametric family of weighted norms \eqref{eq: flat norms 2} on flat multivector fields.

\medskip

The Nash-Moser method for flat functions, developed in the paper, should be useful in many other settings. For example, several local form problems in differential geometry can be split into a formal part and a flat part. The formal problem is at the level of formal power series, and can often be treated using algebraic methods. The flat problem is of an analytic nature, and can potentially be approached with the tools developed here.

\tableofcontents
\setcounter{section}{4}

\section{A quantitative linearization theorem}\label{section: flat linearization}

First, we show how to obtain the main result of this paper, namely that $\spl$ is Poisson non-degenerate, from the results of Part I and Theorem \ref{theorem: rigidity:qualitative}:

\begin{proof}[Proof: (Theorem C $+$ Theorem \ref{theorem: rigidity:qualitative}) $\implies$ Theorem A]
Since $\spl$ is semi\-simple, $\pi_{\spl}$ is formally non-degenerate \cite[Theorem 6.1]{Wein83}. Theorem C provides the homotopy operators needed to apply Theorem \ref{theorem: rigidity:qualitative}, and obtain Theorem A.
\end{proof}

Note that Theorem \ref{theorem: rigidity:qualitative} gives no information about the size of the ball on which the Poisson structure $\pi$ is linearizable. Below we give a more quantitative version of the result, which states that the size of this ball can be predetermined, if $\pi$ is close enough to $\pi_{\mathfrak{g}}$. It will be this version of the theorem that we will prove with the Nash-Moser technique. 

\begin{theorem}\label{theorem: rigidity}
Under the assumptions of Theorem \ref{theorem: rigidity:qualitative} the following holds. There exists $N\geq 0$ such that for any $r\in (0,R)$ there exists $\delta(r)>0$ such that for every Poisson structure $\pi$ on $\overline{B}_R$ satisfying
\[ j^{\infty}_0(\pi-\pi_{\mathfrak{g}})=0 \ \ \ \ \ \text{ and } \ \ \ \ \ \norm{\pi-\pi_{\mathfrak{g}}}_{N,R}\le \delta(r),\]
there exists an open Poisson embedding 
\[ \Phi: (B_r,\pi)\hookrightarrow (B_R,\pi_{\mathfrak{g}})\]
satisfying $\Phi(0)=0$ and $j^{\infty}_0(\Phi-\id)=0$.
\end{theorem}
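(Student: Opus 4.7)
The plan is to set up a Newton--Nash--Moser iteration in which the classical homotopy identity for $\pi_{\mathfrak{g}}$ is used to linearize the Poisson condition $[\pi,\pi]=0$ one step at a time. Writing $W=\pi-\pi_{\mathfrak{g}}$, the Poisson condition becomes $\dif_{\pi_{\mathfrak{g}}} W=-\tfrac{1}{2}[W,W]$, so the homotopy relation \eqref{eq: homotopy relation} applied to a flat $W$ yields
\[
W = \dif_{\pi_{\mathfrak{g}}} h^1(W) - \tfrac{1}{2}h^2([W,W]).
\]
Hence the vector field $X:=h^1(W)$ is an approximate infinitesimal correction: if $\Phi=\exp(X)$ and $\pi':=\Phi^{*}\pi$, then $W':=\pi'-\pi_{\mathfrak{g}}$ satisfies a schematic estimate of the form $W' = O(W^2)$ modulo the smoothing error, and $W'$ is again flat because $h^1$, $h^2$ and $\Phi$ all preserve flatness at $0$.

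Next I would convert the ``bad'' estimate \eqref{eq: property H estimate} into a Nash--Moser-compatible one via the trick announced in the introduction: fix $q=2b$ and introduce
\[
\oldnorm{W}_{n,r} := |W|_{n,\,qn,\,r},
\]
so that \eqref{eq: property H estimate} degrades to the tame form
\[
\oldnorm{h^{i}(W)}_{n,r} \le C\,\oldnorm{W}_{pn+d,r}\qquad (p=\tfrac{3}{2}).
\]
With $p<2$, the iterative scheme of \cite{LoZe} becomes applicable, provided I have at hand a family of smoothing operators $\{S_t\}_{t>1}$ acting on flat multivector fields that satisfies the usual Jackson/Bernstein inequalities for the norms $\oldnorm{\cdot}_{n,r}$; these are the operators constructed in the appendix via the inversion $x\mapsto x/|x|^2$, under which flatness at $0$ becomes Schwartz decay at infinity so that mollification is well-behaved.

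With these preliminaries I would run the iteration: starting from $\pi_0=\pi$, choose an increasing sequence $t_n\to\infty$ (say $t_n = t_0^{\kappa^n}$ for some $\kappa\in(1,2/p)$) and a decreasing sequence of radii $R>r_0>r_1>\cdots>r$, and set
\[
W_n := \pi_n - \pi_{\mathfrak{g}},\qquad X_n := h^1(S_{t_n} W_n),\qquad \Phi_n := \exp(X_n),\qquad \pi_{n+1}:=\Phi_n^{*}\pi_n.
\]
The radius $r_n$ is shrunk by an amount controlled by $\oldnorm{X_n}_{0,r_n}$, which must sum to less than $r_0-r$. Convergence of this scheme in a single suitable norm $\oldnorm{\cdot}_{n_0,r}$ follows the template of \cite{LoZe}: the quadratic mechanism coming from $W'=O(W^2)$, combined with the smoothing/interpolation inequalities and the tame bound above, yields $\oldnorm{W_n}_{n_0,r_n}\to 0$ super-linearly provided the initial datum satisfies $\norm{\pi-\pi_{\mathfrak{g}}}_{N,R}\le \delta(r)$ for $N$ large enough (translating the fixed-index assumption into a bound in $\oldnorm{\cdot}_{n_0,R}$ via $|\cdot|_{n,k,r}\le \norm{\cdot}_{n+k,r}$). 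Standard bootstrapping then upgrades convergence to all $\oldnorm{\cdot}_{n,r}$, hence to all weighted $C^n$-norms on $\overline{B}_r$, so the infinite composition $\Phi=\lim \Phi_0\circ\cdots\circ\Phi_n$ exists as a smooth embedding $B_r\hookrightarrow B_R$ with $\Phi^{*}\pi_{\mathfrak{g}}=\pi$, and $j_0^\infty(\Phi-\id)=0$ because every $X_n$ is flat at $0$.

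The hard part, in my view, is the bookkeeping of the two indices $(n,k)$ throughout the iteration: interpolation inequalities and the smoothing estimates must interact correctly with the flow-composition and pullback operations, without ever leaving the scale of flat multivector fields, while at the same time the progressive domain loss from $r_n$ to $r_{n+1}$ has to be accounted for inside the norm $\oldnorm{\cdot}_{n,r_n}$. The choice $q=2b$ is what aligns this bookkeeping so that the loss of derivatives in $h^1,h^2$ shows up only as a tame factor $pn+d$ in the composite norm, and any deviation from this calibration would break the exponent $p<2$ required by the Nash--Moser machinery.
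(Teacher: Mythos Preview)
Your outline is on the right track and captures the architecture correctly, but the implementation you sketch differs from the paper's in two places that matter.

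First, you apply smoothing before the homotopy, $X_n=h^1(S_{t_n}W_n)$, whereas the paper takes $X_i=S_i\bigl(h^1_{r_i}(Z_i)\bigr)$. With the paper's order the homotopy identity gives directly
\[
Z_i-\dif_{\pi_{\mathfrak g}}X_i=[\pi_{\mathfrak g},(\id-S_i)h^1_{r_i}(Z_i)]-\tfrac12\,h^2_{r_i}([Z_i,Z_i]),
\]
so the quadratic term is exactly $h^2([Z,Z])$ and the smoothing remainder sits in a single bracket with $\pi_{\mathfrak g}$. With your order you would instead face $h^2(\dif_{\pi_{\mathfrak g}}S_{t_n}W_n)$, and since $S_{t_n}$ does not commute with $\dif_{\pi_{\mathfrak g}}$ this introduces an extra commutator term that has to be estimated separately.

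Second, and more substantially: the collapse to a single scale $\oldnorm{W}_{n,r}=|W|_{n,qn,r}$ is described in the Introduction only as a heuristic for why the method should work; the proof in the paper does \emph{not} carry this through. Instead it works directly with the double-indexed norms $\fnorm{\cdot}_{n,k,r}$, uses the genuinely two-parameter smoothing $S_{t,s,r}=T_{s,r}\circ S_{t,r}$ (with $t=t_i^{12p}$, $s=t_i$) that smooths in the derivative index \emph{and} in the weight index, and carries three fixed pairs $(x_a,y_a),(x_b,y_b),(x_c,y_c)$ through the simultaneous induction $(a_i)\wedge(b_i)\wedge(c_i)\wedge(d_i)$. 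The high-index bounds $(b_i),(c_i)$ feed back into the decay $(a_{i+1})$ via interpolation in both $n$ and $k$ separately (\eqref{eq: interpolation t} and \eqref{eq: interpolation s}), and the bootstrap to all norms is done afterwards in Lemmas~\ref{lemma: ekc}--\ref{lemma: bkc}. If you really want to run the scheme in the composite norms $\oldnorm{\cdot}_n$, you would have to verify that the two-parameter smoothing and both interpolation inequalities assemble into genuine one-parameter Nash--Moser smoothing and interpolation on that scale; this is the ``bookkeeping'' you flag as hard, and the paper avoids it by never leaving the $(n,k)$ grid.

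A minor slip: your iteration $\pi_{n+1}=\Phi_n^{*}\pi_n$ with $\pi_0=\pi$ yields $\Phi^{*}\pi=\pi_{\mathfrak g}$, not $\Phi^{*}\pi_{\mathfrak g}=\pi$; the constructed $\Phi$ goes $(\overline B_r,\pi_{\mathfrak g})\to(B_R,\pi)$, as in the paper's Section~\ref{section: convergence}.
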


We show now that it will be sufficient to prove this result:

\begin{proof}[Proof: Theorem \ref{theorem: rigidity} $\implies$ Theorem \ref{theorem: rigidity:qualitative}]
Let $m_t:\overline{B}_{R}\to \overline{B}_{t R}$ denote scalar multiplication by $t>0$, i.e.\ $m_t(x):=tx$. For $t\in (0,1]$, define a path of Poisson bivectors $\pi_t\in \mathfrak{X}^2(\overline{B}_R)$ via
\[ \pi_t:= t\cdot m_{t}^*\pi.\]
 Note that the path extends smoothly at $t=0$ with $\pi_0=\pi_{\mathfrak{g}}$. By continuity, there exists a $t>0$ such that 
\[ \norm{\pi_{t}-\pi_{\mathfrak{g}}}_{N,R}\le \delta(R/2) .\]
Therefore, by Theorem \ref{theorem: rigidity} for $r=R/2$, there exists an open embedding $\Phi:B_{R/2}\hookrightarrow B_{R}$ such that $\Phi^*(\pi_{t})=\pi_{\g}$. Using linearity of $\pi_{\g}$ we have:
\[ \pi_{\g}=\frac{1}{t} m^*_{\frac{1}{t}}\pi_{\g}=\frac{1}{t}m_{\frac{1}{t}}^*(\Phi^*(t\cdot m_t^*(\pi)))=(m_t\circ \Phi\circ m_{\frac{1}{t}})^*(\pi).\]
Hence we obtain an open Poisson embedding 
\[ m_{t} \circ \Phi \circ m_{\frac{1}{t}}:(B_{tR/2},\pi_{\g})\hookrightarrow (B_{tR},\pi)\]
which fixes the origin to infinite order. 
\end{proof}

The homotopy operators described in the following remark will be used in the proof of Theorem \ref{theorem: rigidity}.

\begin{remark}\label{remark: homotopy ops} Theorems \ref{theorem: rigidity:qualitative} and \ref{theorem: rigidity} assume the existence of homotopy operators only on the ball $\overline{B}_R$. However, because the Poisson structure is linear, by using rescaling, we obtain homotopy operators on each ball of radius $r>0$:
\begin{align*}
    \mathfrak{X}^1_0(\overline{B}_r)\xleftarrow{h^1_r} \mathfrak{X}^2_0(\overline{B}_r)\xleftarrow{h^2_r} \mathfrak{X}^3_0(\overline{B}_r)
\end{align*}
where
\begin{align}\label{eq: homotopy on general ball}
    h^i_r:= \frac{r}{R}\cdot m^*_{\frac{R}{r}}\circ h^i\circ m^*_{\frac{r}{R}}.
\end{align}
These operators also satisfy inequalities as in the statement, with constants $C(n,k,r)$ depending continuously on $r>0$. Note however that these operators might not satisfy the SLB, because they are not necessarily compatible with the restriction maps, i.e., $h^{i}_r(W|_r)$ might be different from $h^i_R(W)|_r$, for $W\in \mathfrak{X}^{i+1}_0(\overline{B}_R)$ and $R>r$. 
\end{remark}

\section{The sketch of the Nash-Moser algorithm}\label{section: flat NM}
We explain schematically the algorithm. Given $\pi$, our aim is to find a diffeomorphism $\Phi$ such that $\Phi^*(\pi)=\pi_{\g}$. Consider the map
\begin{equation*}
    \begin{array}{ccc}
        (\pi_{\mathfrak{g}}+\mathfrak{X}^2_0(\g^*))\times \mathfrak{X}^1_0(\g^*)& \to & \mathfrak{X}^2_0(\g^*)  \\
         (\pi=\pi_{\mathfrak{g}}+Z,X)&\mapsto &\phi_X^*(\pi)-\pi_{\g} 
    \end{array}
\end{equation*}
where $\phi_X$ denotes the time-one flow of $X$. We think about the problem as follows: given $Z$ we need to find $X$ which minimizes the output of this map. For the sake of this argument, we assume that we work in a Banach space with norm $|\cdot |$. Moreover, we assume that $Z$ is small in this norm, and we are looking for an $X$ which is also small. We denote this schematically as:
\[ |Z|\leq \mathcal{O}(\epsilon),\qquad |X|\leq \mathcal{O}(\epsilon).\]
Then the pullback along the flow $\phi_X$ can be expanded as:
\[ \phi_X^*=\id +\Lie_X +\mathcal{O}(\epsilon^2).\]
This allows us to estimate 
\begin{align*}
    |\phi_X^*(\pi)-\pi_{\mathfrak{g}}|\le &\ |\phi_X^*(\pi_{\mathfrak{g}})-\pi_{\mathfrak{g}}-\Lie_X \pi_{\mathfrak{g}}|+|\phi_X^*(Z)-Z|+|Z-\dif_{\pi_{\mathfrak{g}}}X|\\
    \le &\ |Z-\dif_{\pi_{\mathfrak{g}}}X| +\mathcal{O}(\epsilon^2).
\end{align*}

The cohomological flavour of the problem is now evident: we would like to find a ``primitive'' of $Z$. Assume that $H^2(\g^*,\pi_{\g})=0$, and that we have continuous homotopy operators $h^i:\mathfrak{X}_0^{i+1}(\g^*)\to \mathfrak{X}_0^i(\g^*)$, $i=1,2$. 

Then $X:=h^1(Z)$ satisfies $X\in \mathcal{O}(\epsilon)$ and 
\begin{align*}
    Z-\dif_{\pi_{\mathfrak{g}}}X=&\ Z-\dif_{\pi_{\mathfrak{g}}}h^1(Z) =  h^2([\pi_{\mathfrak{g}},Z]) =-\frac{1}{2}h^2([Z,Z])\in \mathcal{O}(\epsilon^2),
\end{align*}
where we used the homotopy relation and we have rewritten the Poisson equation for $\pi=\pi_{\mathfrak{g}}+Z$ as a Maurer-Cartan equation for $Z$:
\[ [\pi,\pi]=0 \ \ \ \ \Leftrightarrow \ \ \ \ \dif_{\pi_{\mathfrak{g}}}Z+\frac{1}{2}[Z,Z]=0.\]

This allows us to conclude that
\begin{align*}
    |\phi_X^*(\pi)-\pi_{\mathfrak{g}}|\le \mathcal{O}(\epsilon^2).
\end{align*}
We define the following algorithm 
\[ \pi_{i+1}:=\phi_{X_{i}}^*(\pi_{i}) \ \ \ \ \ \text{ with } \ \ \ X_i:=h^1(\pi_i-\pi_{\mathfrak{g}}). \]
From the discussion above we obtain
\[ |\pi_{i+1}-\pi_{\mathfrak{g}}|\le C\, \epsilon^{2^i} \ \ \ \ \ \text{ and } \ \ \ \ \ \sum_{i\geq 0} |X_i|< \infty. \]
Therefore $\Phi:= \phi_{X_0}\circ\phi_{X_1} \circ \dots$ converges to a diffeomorphism satisfying 
\[\Phi^*(\pi)=\pi_{\mathfrak{g}}.\]

To implement this algorithm for smooth flat functions, several problems need to be overcome. First, the flows will be defined only on smaller domains, so we need to decrease the size of the balls in every step in a controlled way. The second problem is that, instead of Banach spaces, we work with the Fr\'echet space of flat functions, with the family of norms $\fnorm{\cdot}_{n,k,r}$. In order to compensate for the ``loss of derivatives'' in the estimate \eqref{eq: property H estimate} of the homotopy operator, we modify the iteration using a version of Nash's smoothing operators adapted to the flat setting.

\section{Prerequisites}\label{section: prerequisits}

The purpose of this section is to establish all the tools needed for the Nash-Moser method. Namely, we give estimates for the Schouten-Nijenhuis bracket, for the flow of vector fields and for the pullback operation via flows. 

Throughout this section $B_r\subset \R^m$ denotes the open ball of radius $r$ at the origin, and $\overline{B}_r$ its closure. On the Fr\'echet space $\mathfrak{X}^{\bullet}(\overline{B}_r)$ of multivector fields on $\overline{B}_r$, we use standard $C^n$-norms $\norm{\cdot}_{n,r}$, and on the Fr\'echet space of multivector fields flat at the origin $\mathfrak{X}^{\bullet}_0(\overline{B}_r)$, we use the norms $\fnorm{\cdot}_{n,k,r}$ recalled in Appendix \ref{subsec: equivalent seminorms}. We use the notation for partial derivatives $D^a$, $a\in \N_0^m$, from Appendix B in Part I. 

A useful tool is the following trivial inequality: 
\begin{align}\label{eq: convex}
x^{\lambda}y^{1-\lambda}\le &\ x+y \ \ \ \ \text{ for }\ x,y\in [0,\infty) , \ \lambda\in [0,1]
\end{align}


\subsection{The Schouten-Nijenhuis bracket}
We begin with the estimates for the Schouten-Nijenhuis bracket of (flat) multivector fields with respect to the weighted norms:
\begin{lemma}\label{lemma: SN}
The Schouten-Nijenhuis bracket satisfies the estimate
\begin{align*}
	\fnorm{[V,W]}_{n,k+l,r}\le &\ C (\fnorm{V}_{0,k,r}\fnorm{W}_{n+1,l,r}+\fnorm{V}_{n+1,k,r}\fnorm{W}_{0,l,r})
\end{align*}
for all $V,W\in\mathfrak{X}^{\bullet}_0(\overline{B}_r)$ and $k,l,n\in \N_0 $, with constants $C=C(n,k,l,r)>0$ depending continuously on $r>0$. The inequality also holds for $W\in\mathfrak{X}^{\bullet}(\overline{B}_r)$, if we take $l=0$.
\end{lemma}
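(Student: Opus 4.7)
The plan is to pass to the equivalent seminorms $|W|_{n,k,r}=\norm{|x|^{-k}W}_{n,r}$ from Subsection~\ref{subsec: equivalent seminorms} and reduce the claim to the classical tame product inequality for the $C^n$ norm on $\overline{B}_r$, namely
\[\norm{fg}_{n,r}\le C\bigl(\norm{f}_{0,r}\norm{g}_{n,r}+\norm{f}_{n,r}\norm{g}_{0,r}\bigr),\]
combined with the convex interpolation $\norm{f}_{i,r}\norm{g}_{j,r}\le C(\norm{f}_{0,r}\norm{g}_{i+j,r}+\norm{f}_{i+j,r}\norm{g}_{0,r})$ that follows from \eqref{eq: convex} applied to intermediate derivatives.

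In local coordinates the Schouten--Nijenhuis bracket is a universal bilinear combination of products $V^I\partial_iW^J$ and $W^J\partial_iV^I$ with constant coefficients, so I would distribute the weight as
\[|x|^{-(k+l)}V^I\partial_iW^J=(|x|^{-k}V^I)(|x|^{-l}\partial_iW^J),\]
apply the tame product inequality to each resulting scalar product, and reduce the lemma to the single-differentiation estimate
\[\norm{|x|^{-l}\partial_iW^J}_{n,r}\le C\,|W|_{n+1,l,r}\]
(and its symmetric analogue with $V,k$ in place of $W,l$), followed by one use of convex interpolation to collapse intermediate derivative counts onto the endpoints $0$ and $n+1$ appearing on the right-hand side.

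To prove the single-differentiation estimate I would use the commutator identity
\[|x|^{-l}\partial_iW=\partial_i(|x|^{-l}W)+l\,\frac{x_i}{|x|^2}\,(|x|^{-l}W),\]
whose first term is dominated by $|W|_{n+1,l,r}$ directly. The second term introduces a singular factor $x_i/|x|^2$ of order $|x|^{-1}$, and here the flatness of $W$ is essential: iterated Hadamard decomposition of the flat function $|x|^{-l}W$ combined with the radial mean-value bound $\norm{|x|^{-1}f}_{0,r}\le C\,\norm{\nabla f}_{0,r}$ gives a weight-for-derivative trade-off of the type $|W|_{n,l+1,r}\le C\,|W|_{n+1,l,r}$, which suffices to absorb the singular term within the allowed derivative budget.

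The main obstacle is precisely this singular commutator between multiplication by $|x|^{-l}$ and differentiation: naively it costs one extra unit of weight, and only the flatness of the argument allows us to trade weight for derivatives without exceeding the $n+1$ derivatives already present on the right-hand side. The case $W\in\mathfrak{X}^\bullet(\overline{B}_r)$ with $l=0$ is immediate because no commutator term is produced, which is exactly why flatness of $W$ is not needed in that regime.
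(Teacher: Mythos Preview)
Your argument is correct, but it manufactures the very obstacle it then works to overcome. By passing to the equivalent norms $|\cdot|_{n,k,r}=\norm{|x|^{-k}\,\cdot\,}_{n,r}$ (weight applied \emph{before} differentiation) you are forced to deal with the commutator $[|x|^{-l},\partial_i]$, and your Hadamard/Taylor trade-off to absorb the resulting $x_i/|x|^2$ factor essentially reproves one direction of Lemma~\ref{lemma: equivalence}. The paper instead works directly with the norms $\fnorm{\cdot}_{n,k,r}$ of \eqref{defintion:norms:appendix:flat}, where the weight is applied \emph{after} differentiation; then the Leibniz rule gives the pointwise bound
\[
\frac{|D^{a}[V,W](z)|}{|z|^{k+l}} \le C\sum_{|b_1|+|b_2|=n+1}\frac{|D^{b_1}V(z)|}{|z|^{k}}\cdot\frac{|D^{b_2}W(z)|}{|z|^{l}}
\]
with no commutator term at all, and one finishes with the same interpolation-plus-convexity step \eqref{eq: interpolation t}, \eqref{eq: convex} that you describe. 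Both routes arrive at the same estimate; the paper's choice of norms makes what you call ``the main obstacle'' disappear from the outset, at the cost of having proved the norm equivalence separately in the appendix.
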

\begin{proof}
For $V\in\mathfrak{X}^{\bullet}_0(\overline{B}_r)$, $W\in\mathfrak{X}^{\bullet}(\overline{B}_r)$ and $a\in \N_0^m$ with $|a|=n$, we have
\begin{align*}
    \frac{|D^{a}[V,W](z)|}{|z|^{k+l}} \le &\ \sum_{|b_1| +|b_2|=n+1} \frac{|D^{b_1}V(z)|}{|z|^k}\frac{|D^{b_2}W(z)|}{|z|^l}.
\end{align*}
Using the interpolation inequalities in \eqref{eq: standard interpolation} and \eqref{eq: interpolation t} we have for $1\le i\le n$
\begin{align*}
\fnorm{W}_{i,l,r}\le &\  C\fnorm{W}_{0,l,r}^{\frac{n+1-i}{n+1}}\fnorm{W}_{n+1,l,r}^{\frac{i}{n+1}}\\     
\fnorm{V}_{n+1-i,k,r}\le &\ C\fnorm{V}_{n+1,k,r}^{\frac{n+1-i}{n+1}}\fnorm{V}_{0,k,r}^{\frac{i}{n+1}}
\end{align*}
Applying inequality \eqref{eq: convex}, these give the estimate from the statement
\begin{align*}
    \fnorm{[V,W]}_{n,k+l,r}\le &\ C \sum_{i=0}^{n+1}\fnorm{V}_{n+1-i,k,r}\fnorm{W}_{i,l,r}\\
    \le&\ C \sum_{i=0}^{n+1} (\fnorm{V}_{n+1,k,r}\fnorm{W}_{0,l,r})^{\frac{n+1-i}{n+1}}(\fnorm{V}_{0,k,r}\fnorm{W}_{n+1,l,r})^{\frac{i}{n+1}}\\
    \le&\ C(\fnorm{V}_{0,k,r}\fnorm{W}_{n+1,l,r} +\fnorm{V}_{n+1,k,r}\fnorm{W}_{0,l,r})\qedhere
\end{align*}
\end{proof}



\subsection{Estimates for flows in the flat setting}

In this subsection we prove estimates for the pullback of (flat) multivector fields by flows of flat vector fields with respect to the weighted norms. The proof is based on the proof of \cite[Lemmas 3.10 and 3.11]{Marcut} which provide similar estimates for the usual $C^n$-norms.

\begin{lemma}\label{lemma: flow}
There exists $\theta>0$ such that for all $0<s<r$ and all flat vector fields $Y\in \mathfrak{X}^1_0(\overline{B}_r)$ with
\[\norm{Y}_{0,r}<(r-s)\theta \quad \text{ and }\quad \norm{Y}_{1,r}<\theta \]
the flow of $Y$ is well-defined for $t\in [0,1]$ as a map
\[ \phi ^t_Y:\overline{B}_s\to B_r.\]
Moreover, the following hold for all $n,k\in\N_0$:
\begin{enumerate}[(a)]
    \item The time-one flow $\phi_Y:=\phi^1_Y$ satisfies:
\[\norm{\phi_Y-\id}_{n,s} \le  C \norm{Y}_{n,r}\]
where $C=C(n,r)$ depends continuously on $r$.
\item The pullback along $\phi_Y$ of flat multivector fields satisfies:
\begin{align*}
    \fnorm{\phi_Y^*(V)}_{n,k,s} \le &\ C (\fnorm{V}_{n,k,r}+\fnorm{V}_{0,k,r}\norm{Y}_{n+1,r} )
\end{align*}
for all $V\in \mathfrak{X}_0^{\bullet}(\overline{B}_r)$, where $C=C(k,n,r)$ depends continuously on $r$.
\item For any multivector field $W\in \mathfrak{X}^{\bullet}(\overline{B}_r)$ we have the following inequalities:
\begin{align*}
    \fnorm{\phi_Y^*(W)-W|_{\overline{B}_s}}_{n,k,s} \le \ C_W & (\fnorm{Y}_{n+1,k,r}+\fnorm{Y}_{1,k,r}\norm{Y}_{n+1,r})\\
    \le \ C_W & \fnorm{Y}_{n+1,k,r}(1+\norm{Y}_{n+1,r})\\
    \fnorm{\phi_Y^*(W)-W|_{\overline{B}_s}-\phi_Y^*([Y,W])}_{n,k,s}&\leq\\
\leq\ C_W ( \fnorm{Y}_{n+1,k,r}&\norm{Y}_{1,r} +\fnorm{Y}_{1,k,r}\norm{Y}_{n+2,r})
\end{align*}
where $C_W=C_W(k,n,r)$ depends continuously on $r$, and also on $W$.
\end{enumerate}
\end{lemma}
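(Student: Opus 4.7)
The plan is to adapt the arguments of \cite[Lemmas 3.10 and 3.11]{Marcut}, which give analogous estimates in the ordinary $C^n$-norms, to the weighted flat norms $\fnorm{\cdot}_{n,k,r}$. Existence of the flow is the easy part: the ODE $\dot{\phi}_Y^t(x)=Y(\phi_Y^t(x))$ has $|\phi_Y^t(x)-x|\le t\norm{Y}_{0,r}$, so the condition $\norm{Y}_{0,r}<(r-s)\theta$ with $\theta\le 1$ keeps the flow inside $B_r$ for $t\in[0,1]$, $x\in\overline{B}_s$. The crucial additional ingredient is that, because $Y$ is flat, $Y(0)=0$ and hence $|Y(z)|\le \norm{Y}_{1,r}\cdot|z|$. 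Gr\"onwall then gives
\[ |x|\, e^{-t\norm{Y}_{1,r}} \le |\phi_Y^t(x)|\le |x|\, e^{t\norm{Y}_{1,r}},\]
so choosing $\theta$ small enough (e.g.\ $\theta\le \log 2$) yields the two-sided comparison $\tfrac{1}{2}|x|\le|\phi_Y^t(x)|\le 2|x|$. This comparison is what makes the weight $|x|^{-k}$ behave under the flow, and it is the only genuinely new ingredient relative to the non-flat proofs.

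For part (a), I would differentiate the integral equation $\phi_Y^t=\id+\int_0^tY\circ \phi_Y^s\,ds$ and apply Fa\`a di Bruno: the partial derivative $D^a\phi_Y^t$ of order $n$ satisfies an integral inequality whose right-hand side is polynomial in lower-order derivatives of $\phi_Y^t$ with coefficients controlled by $\norm{Y}_{n,r}$. An induction on $n$ combined with Gr\"onwall at each step delivers $\norm{\phi_Y-\id}_{n,s}\le C\norm{Y}_{n,r}$.

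For part (b), I would expand $D^a(\phi_Y^*V)(x)$ via Fa\`a di Bruno as a sum of products of the form $D^bV(\phi_Y(x))$ times products of derivatives of $\phi_Y$. Dividing by $|x|^k$ and using the comparison $|x|^{-k}\le 2^k|\phi_Y(x)|^{-k}$ from the first paragraph converts the $V$-factor into the weighted norm $\fnorm{V}_{|b|,k,r}$, while part (a) bounds the $\phi_Y$-factors by various $\norm{Y}_{j,r}$. The interpolation inequalities used in the proof of Lemma 4.1, together with the Young-type inequality \eqref{eq: convex}, collapse the resulting sum into the two advertised summands.

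For part (c) I would use the integral representations
\begin{align*}
\phi_Y^*(W)-W&=\int_0^1\phi_Y^{t*}(\Lie_Y W)\,\dif t,\\
\phi_Y^*(W)-W-\phi_Y^*([Y,W])&=-\int_0^1\!\!\int_t^1 \phi_Y^{s*}\bigl([Y,[Y,W]]\bigr)\,\dif s\,\dif t,
\end{align*}
which reduce everything to pulling back a Schouten bracket involving $Y$. Combining part (b) applied to $V=[Y,W]$ or $V=[Y,[Y,W]]$ (both flat because $Y$ is flat) with the Schouten estimate of Lemma 4.1 distributes one or two factors of $Y$ across the inequality; the asymmetric pairing in the second inequality of (c) is precisely what the Schouten estimate produces when one factor carries the weight and the other carries the differentiations. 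The main obstacle throughout is purely bookkeeping: keeping the weight index $k$ from accumulating while the derivative order $n$ is redistributed through interpolation, and making sure that at every step the constants depend continuously on $r$.
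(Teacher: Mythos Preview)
Your proposal is correct and follows essentially the same route as the paper: cite \cite[Lemma 3.10]{Marcut} for flow existence and (a), derive the radial comparison $|\phi_Y(x)|\asymp |x|$ to transfer the weight $|x|^{-k}$ through the flow, expand the pullback via the chain rule and collapse with interpolation plus \eqref{eq: convex} for (b), and for (c) write $\phi_Y^*(W)-W$ and the second-order remainder as integrals of pullbacks of $[Y,W]$ and $[Y,[Y,W]]$, then feed these into (b) and Lemma~\ref{lemma: SN}. The only cosmetic differences are that the paper obtains the one-sided bound $|\phi_Y(x)|\le C|x|$ from the Taylor expansion of $g_Y:=\phi_Y-\id$ rather than from Gr\"onwall (only that direction is needed), writes the second-order remainder as the single integral $-\int_0^1 t\,(\phi_Y^t)^*([Y,[Y,W]])\,\dif t$ (your double integral collapses to this by Fubini), and is explicit about the Cramer-rule factor $\det(\id+\dif g_Y)^{-p}$ in the multivector pullback, which you should unpack when you say ``Fa\`a di Bruno''.
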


\begin{remark}
There are sharper versions for the estimates in item (c) in the lemma, which give a precise dependence on $W$; for example:
\begin{align*}
    \fnorm{\phi_Y^*(W)-W|_{\overline{B}_s}}_{n,k,s} \le \ C (\fnorm{Y}_{n+1,k,r}\norm{W}_{0,r}&+\fnorm{Y}_{0,k,r}\norm{W}_{n+1,r}\\
    \ \ \ \ \ +\fnorm{Y}_{1,k,r}\norm{W}_{1,r}&\norm{Y}_{n+1,r}).
\end{align*}
However, for our purposes the estimates in the lemma will be sufficient.
\end{remark}

\begin{proof}
By \cite[Lemma 3.10]{Marcut}, there exists $\theta>0$ such that item (a) holds. For such a $\theta$, let $Y$ satisfy the assumptions and write its flow as
\[ \phi_Y:= \id +g_Y:\overline{B}_s\to B_r\]

To show the estimate in (b) let $V\in \mathfrak{X}_0^{p}(\overline{B}_r)$ for some $p\in \N_0$. Write $V=V^J\partial_J$, where this sum runs over all $J=\{1\leq j_1<\ldots< j_p\leq m\}$ and $\partial_J=\partial_{j_1}\wedge \ldots \wedge \partial_{j_p}$. The pullback $\phi_Y^*(V)$ can be written as
\[ \phi_Y^*(V)|_x=V^J(x+g_Y(x))\phi^*_Y(\partial_J)|_x\]
Using Cramer's rule, there exists polynomials $Q_J^I$ such that
\[\phi^*_Y(\partial_J)|_x=
Q_J^I(\dif g_Y(x))\det (\id +\dif g_Y(x))^{-p}\partial_I|_x.\]
Hence we get the expression
\[\phi_Y^*(V)|_x=V^J(x+g_Y(x))Q_J^I(\dif g_Y(x))\det (\id +\dif g_Y(x))^{-p}\partial_I|_x\]
For $a\in \N_0^m$ with $n=|a|$ we can estimate $D^{a}\phi_Y^*(V)|_x$ by estimating the coefficients
\[ D^{a}(V^J\big(x+g_Y(x))Q_J^I(\dif g_Y(x))\det (\id +\dif g_Y(x))^{-p}\big)\]
for any $I$ and $J$. Such a term is a sum of elements of the form:
\begin{equation}\label{eq: derivative multivector}
\begin{array}{rcl}
D^{b}(V^J)(x+g_Y(x))D^{b^1}(g_Y)(x)\dots D^{b^q}(g_Y)(x)\det (\id +\dif g_Y(x))^{-\tilde{p}}
\end{array}
\end{equation}
with coefficients depending only on $a$, and with indices satisfying:
\[ 0\le q ,\ \ \ p\le \tilde{p},\ \ \ 1\le |b^i| , \ \ \ |b| +\sum_{i=1}^q (|b^i|-1)=n\]
By (a) we have $\norm{g_Y}_{1,s}\le C \cdot \theta $,
and hence, by possibly shrinking $\theta$, we have
\[ \det (\id + \dif g_Y(x))^{-1} \le 2.\] 
Next, note that by the Taylor expansion formula we get
\[|\phi_Y(x)|\le |x|C(1+\norm{g_Y}_{1,s})\le C|x|.   \]
and so for $x\ne 0$ we obtain 
\[ \frac{1}{|x|}\le \frac{C}{|\phi_Y (x)|}.\]
Hence after dividing \eqref{eq: derivative multivector} by $|x|^k$ we can bound the terms there by
\[ C \sum_{j,j_1,\dots j_q}\fnorm{V}_{j,k,r}\norm{g_Y}_{j_1+1,s}\dots \norm{g_Y}_{j_q+1,s}\]
where the indices $j\geq 0$ and $j_i\geq 0$ satisfy
\[j+j_1+\ldots +j_q=n.\]
For the terms with $q>0$ we apply first (a) and then the interpolation inequality \eqref{eq: standard interpolation} to each $\norm{Y}_{j_l+1,r}$ and obtain:
\[ \fnorm{\phi_Y^*(V)}_{n,k,s}\le C\Big(\fnorm{V}_{n,k,r} +\sum_{j=0}^{n}\fnorm{V}_{j,k,r}\norm{Y}_{n-j+1,r}\Big).\]
By applying \eqref{eq: interpolation t} to $\fnorm{V}_{j,k,r}$ and \eqref{eq: standard interpolation} to $\norm{Y}_{n-j+1,r}$ we get
\begin{equation*}
\begin{array}{rcl}
\fnorm{V}_{j,k,r}&\le &C \fnorm{V}_{0,k,r}^{1-\frac{j}{n}}\fnorm{V}_{n,k,r}^{\frac{j}{n}}\\
\norm{Y}_{n+1-j,r}&\le &C \norm{Y}_{1,r}^{\frac{j}{n}}\norm{Y}_{n+1,r}^{1-\frac{j}{n}}\ \le\ C\norm{Y}_{n+1,r}^{1-\frac{j}{n}}
\end{array}
\end{equation*}
Hence we get the estimate
\begin{equation*} 
\begin{array}{rcl}
    \fnorm{V}_{j,k,r}\norm{Y}_{n-j+1}&\le &C ((\fnorm{V}_{0,k,r}\norm{Y}_{n+1,r})^{1-\frac{j}{n}}\fnorm{V}_{n,k,r}^{\frac{j}{n}}\\
    &\stackrel{\eqref{eq: convex}}{\le} &C (\fnorm{V}_{n,k,r}+\fnorm{V}_{0,k,r}\norm{Y}_{n+1,r})
\end{array}
\end{equation*}
which implies (b).

For (c) let $W\in\mathfrak{X}^p(\overline{B}_r)$ and denote by $W_t$ the multivector field
\[ W_t:=(\phi_Y^t)^*(W)-W|_{\overline{B}_s}. \] 
Then we have $W_0 =0$, $W_1 =\phi_Y^*(W)-W|_{\overline{B}_s}$ and 
\[ \frac{\dif}{\dif t}W_t= (\phi_Y^t)^*([Y,W]).\]
Therefore we can write
\[ \phi_Y^*(W)-W|_{\overline{B}_s}=\int_0^1(\phi_Y^t)^*([Y,W])\dif t\]
Since $Y$ is flat at the origin, so is $[Y,W]$ and hence (b) implies
\begin{align*}
    \fnorm{\phi_Y^*(W)-W|_{\overline{B}_s}}_{n,k,s}\le&\ C (\fnorm{[Y,W]}_{n,k,r}+\fnorm{[Y,W]}_{0,k,r}\norm{Y}_{n+1,r})
\end{align*}
Applying Lemma \ref{lemma: SN} yields the first inequalities
\begin{align*}
    \fnorm{\phi_Y^*(W)-W|_{\overline{B}_s}}_{n,k,s}\le &\ C_W (\fnorm{Y}_{n+1,k,r}+\fnorm{Y}_{1,k,r}\norm{Y}_{n+1,r})\\
    \le &\ C_W \fnorm{Y}_{n+1,k,r}(1+\norm{Y}_{n+1,r})
\end{align*}

For the last inequality in (c) we introduce 
\[ W_t:=(\phi_Y^t)^*(W)-W|_{\overline{B}_s} -t(\phi_Y^t)^*([Y,W])\] 
Then $W_0 =0$, $W_1 =\phi_Y^*(W)-W|_{\overline{B}_s} -\phi_Y^*([Y,W])$ and 
\[ \frac{\dif}{\dif t}W_t= -t(\phi_Y^t)^*([Y,[Y,W]]).\]
implying that
\[ W_1 =-\int_0^1t(\phi_Y^t)^*([Y,[Y,W])\dif t\]
Since $Y$ is flat at the origin, so is $[Y,[Y,W]]$. Hence (b) yields
\begin{align*}
    \fnorm{W_1}_{n,k,s}\le &\ C(\fnorm{[Y,[Y,W]]}_{n,k,r}+\fnorm{[Y,[Y,W]]}_{0,k,r}\norm{Y}_{n+1,r}) 
\end{align*}
Applying Lemma \ref{lemma: SN} to the brackets gives
\begin{align*}
    \fnorm{[Y,[Y,W]]}_{n,k,r} \le &\ C_W (\fnorm{Y}_{n+1,k,r}\norm{[Y,W]}_{0,r}+\fnorm{Y}_{0,k,r}\norm{[Y,W]}_{n+1,r})\\
    \le &\ C_W (\fnorm{Y}_{n+1,k,r}\norm{Y}_{1,r}+\fnorm{Y}_{0,k,r}\norm{Y}_{n+2,r})\\
    \fnorm{[Y,[Y,W]]}_{0,k,r} \le &\ C_W \fnorm{Y}_{1,k,r}\norm{Y}_{2,r}
\end{align*}
By the standard interpolation inequality \eqref{eq: standard interpolation} we have
\[\norm{Y}_{2,r}\norm{Y}_{n+1,r}\le C \norm{Y}_{n+2,r} \]
and hence we obtain the last estimate in (c):
\begin{align*}
    \fnorm{W_1}_{n,k,s} \le &\ C_W (\fnorm{Y}_{n+1,k,r}\norm{Y}_{1,r}+\fnorm{Y}_{1,k,r}\norm{Y}_{n+2,r})\qedhere
\end{align*}
\end{proof}

\section{The algorithm}\label{section: welldefined}

The aim of this section is to set up the algorithm underlying the proof of Theorem \ref{theorem: rigidity}. The algorithm is based on the fast convergence method by Nash-Moser as used by Conn \cite{Conn85} and later by the first author in \cite{Marcut}. Throughout this section we assume the setting of Theorem \ref{theorem: rigidity}.

In the estimates below we will use $C$ to denote several different constants, arising from different estimates established so far (e.g., smoothing operator, homotopy operators, interpolation inequalities, estimates for the flow, etc.). In principle, these numbers depend continuously on the radius $\rho$ of the ball we are working on. However, since $\rho\in[r,R]$, we may ignore this dependence.

\subsection{The definition of the algorithm}
To define the algorithm we denote by $h^i_{\rho}$, for $i=1,2$ and ${\rho}\in [r,R]$ the homotopy operators from \eqref{eq: homotopy on general ball} on the flat Poisson complex in degree two. These satisfy the estimates
\begin{align}\label{eq: homotopy family estimate flat}
         \fnorm{h^i_{\rho}(W)}_{n,k,{\rho}} &\le  C\fnorm{W}_{n+a,k+bn+c,{\rho}}\nonumber \\
         &\le C\fnorm{W}_{n+p-1,k+(p-1)n,\rho}
  \end{align}
  for all $W\in\mathfrak{X}^{i+1}_0(\overline{B}_{\rho})$, for some $C=C(n,k)$, where we have denoted:
\[p:=\mathrm{max}(a+c+1,b+1,22),\]
and have used Corollary \ref{corollary: degree shift}.
We define the constants
\begin{equation}\label{eq: parameters}
    \begin{array}{ccccccc}
    \begin{matrix}
    x_a:=p \\ y_a:= p^2 \end{matrix}& \qquad &\begin{matrix} x_b:=13 p  \\y_b:= 13 p^2 \end{matrix}& \qquad &\begin{matrix} x_c:= 2p\\ y_c:= 130 p^2 \end{matrix} &\qquad &\alpha:= 50p^2.
    \end{array}
\end{equation}

For the given $0<r<R$ we define the sequences
\[r_{i}:=r+\frac{R-r}{i+1},\quad  t_{i}:=t_0^{(3/2)^i},\quad i\geq 0,\]
where $t_0>1$ will be fixed later. Note that by choice of the $r_i$ we have 
\[ r<r_{i+1}<r_i\le R, \ \ \ \ \ \text{ for all } \  i\geq 0.\]

We extend the smoothing operators $\{S_{t,s,r}\}_{0<r,t,s}$ from Subsection \ref{section:combined:smoothing} to flat vector fields, by applying them to coefficients. We define: 
\[S_i:=S_{t_i^{12p},t_i,r_i}:\mathfrak{X}^1_0(\overline{B}_{r_i})\to\mathfrak{X}^1_0(\overline{B}_{r_i}).\]
Note that $1\le t_i$ for all $i\in \N_0$. Hence by Corollary \ref{corollary: smoothing flat functions} the operators $S_i$ satisfy, for $j,k,l,n\in\N_0$ and $X\in \mathfrak{X}_0(\overline{B}_{r_i})$, the estimates
\begin{align}
    \fnorm{S_i(X)}_{n+l,k+j,r_i} \le &\ C \cdot t_i^{{12p}l+j} \fnorm{X}_{n,k+2 l,r_i}\label{eq: smoothing 1}\\
    \fnorm{(S_i-\id)(X)}_{n,k+2l,r_i} \le &\  C \cdot (t_i^{-{12p}l} \fnorm{X}_{n+l,k,r_i}+t_i^{-j}\fnorm{X}_{n,k+2l+j,r_i})\label{eq: smoothing 2}
\end{align}

For the algorithm we define the sequences 
\[ \{X_i\in\mathfrak{X}^1_0(\overline{B}_{r_i})\}_{i\geq 0}, \ \ \ \ \ \ \{\pi_i\in \mathfrak{X}^2(\overline{B}_{r_i})\}_{i\geq 0}, \]
by the following recursive procedure:
\[ \pi_0:=\pi, \ \ \ \ \ Z_i:=\pi_i-\pi_{\g}|_{\overline{B}_{r_i}}, \ \ \ \ X_i:=S_i(h^1_{r_i}(Z_i)), \ \ \ \ \ \pi_{i+1}:=\phi_{X_i}^*(\pi_i).\]
This is almost the iteration described in Subsection \ref{section: flat NM}, with the modifications that we have inserted the smoothing operators to account for the loss of derivatives, and the domains change from step to step.

We will prove by induction the following statements:

the flat bivector $Z_i\in \mathfrak{X}^2_0(\overline{B}_{r_i})$ satisfies the inequalities
\begin{align}
\fnorm{Z_i}_{x_a,y_a,r_i}&\, \le t_i^{-\alpha}\label{eq: ai}\tag{$a_i$}\\
\fnorm{Z_i}_{x_b,y_b,r_i}&\, \le t_i^{\alpha}\label{eq: bi}\tag{$b_i$}\\
\fnorm{Z_{i}}_{x_c,y_c,r_i}&\, \le t_i^{\alpha}\label{eq: ci}\tag{$c_i$},
\end{align}
and, the time-one flow of $X_i$ is defined as a map
\begin{equation}\label{eq: di}
\phi_{X_i}:\overline{B}_{r_{i+1}}\to B_{r_i}\tag{$d_i$}
\end{equation}

From here on we assume that $\pi$ satisfies, for $\bullet=a,b,c$, the inequality:
\begin{align}\label{eq: starting assumption}
\fnorm{Z_0}_{x_\bullet,y_\bullet,R}=\fnorm{\pi -\pi_{\g}|_R}_{x_\bullet,y_\bullet,R}<t_0 ^{-\alpha},
\end{align}
This follows from the assumption of Theorem \ref{theorem: rigidity} and by using Corollary \ref{corollary: degree shift}:
\[\fnorm{\pi -\pi_{\g}|_R}_{x_\bullet,y_\bullet,R}\leq
C\fnorm{\pi -\pi_{\g}|_R}_{x_\bullet+y_\bullet,0,R}\leq 
C\delta,
\]
if we choose $N=\mathrm{max}(x_a+y_a,x_b+y_b,x_c+y_c)$ and $\delta<C^{-1}t_0^{-\alpha}$. 

\subsection{The algorithm is well-defined}

The constant $t_0>1$ will be chosen such that it is larger than a finite number of quantities which will appear in the proof. 

\begin{lemma}\label{lemma: well-definedness}
There exists $t_0> 1$ such that $(a_i)-(d_i)$ hold for all $i\in \N_0$. 
\end{lemma}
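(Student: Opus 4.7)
The plan is to prove $(a_i)$--$(d_i)$ simultaneously by induction on $i$. The base case $i=0$ follows immediately from \eqref{eq: starting assumption}, using that $r_0=R$ and that $t_0^{-\alpha}<t_0^{\alpha}$ since $t_0>1$, for $(a_0)$, $(b_0)$ and $(c_0)$; $(d_0)$ is then obtained from $(a_0),(b_0),(c_0)$ by the same argument used in the inductive step. The strategy is: assuming $(a_i),(b_i),(c_i)$, first deduce enough bounds on $X_i$ in various weighted and $C^n$-norms to establish $(d_i)$ via Lemma \ref{lemma: flow}, and then use the pullback estimates of Lemma \ref{lemma: flow} together with the cohomological structure of the problem to derive $(a_{i+1}),(b_{i+1}),(c_{i+1})$. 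Throughout, $t_0$ is taken large enough to dominate finitely many absolute constants arising in the estimates.

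The bounds on $X_i=S_i(h^1_{r_i}(Z_i))$ follow from chaining the smoothing estimate \eqref{eq: smoothing 1}, the homotopy estimate \eqref{eq: homotopy family estimate flat}, and interpolation between $(a_i)$ and $(b_i)$ or $(c_i)$. The $C^n$-norms needed for Lemma \ref{lemma: flow} are then extracted from flat norms via Corollary \ref{corollary: degree shift}. The condition $\norm{X_i}_{0,r_i}<(r_i-r_{i+1})\theta$ is checked using that $r_i-r_{i+1}=(R-r)/((i+1)(i+2))$ decays only polynomially in $i$, whereas the bounds on $X_i$ coming from $(a_i)$ decay super-exponentially in $i$ after absorbing the polynomial-in-$t_i$ factors picked up by the smoothing; the condition $\norm{X_i}_{1,r_i}<\theta$ is analogous.

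The heart of the argument is $(a_{i+1})$. Following the schematic of Section \ref{section: flat NM}, I decompose
\[Z_{i+1}=\bigl(\phi_{X_i}^*(\pi_{\mathfrak{g}})-\pi_{\mathfrak{g}}-[X_i,\pi_{\mathfrak{g}}]\bigr)+\bigl(\phi_{X_i}^*(Z_i)-Z_i\bigr)+\bigl(Z_i-\dif_{\pi_{\mathfrak{g}}}X_i\bigr).\]
The first term is controlled by combining the second and first estimates of Lemma \ref{lemma: flow}(c) (applied with $W=\pi_{\mathfrak{g}}$ and $W=[X_i,\pi_{\mathfrak{g}}]$ respectively, to absorb the extra $\phi_{X_i}^*$ on the bracket) and is quadratic in $X_i$. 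The second is controlled by the first estimate of Lemma \ref{lemma: flow}(c) with $W=Z_i$, producing a term linear in $X_i$ but multiplied by $Z_i$. For the third, the homotopy relation and the Maurer-Cartan identity $\dif_{\pi_{\mathfrak{g}}}Z_i=-\tfrac12[Z_i,Z_i]$ yield
\[Z_i-\dif_{\pi_{\mathfrak{g}}}X_i=-\tfrac12\,h^2_{r_i}([Z_i,Z_i])-\bigl[(\id-S_i)h^1_{r_i}(Z_i),\,\pi_{\mathfrak{g}}\bigr],\]
which is estimated via Lemma \ref{lemma: SN}, the homotopy bound \eqref{eq: homotopy family estimate flat}, and the smoothing-error bound \eqref{eq: smoothing 2}. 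In each of the three pieces interpolation converts derivative losses into powers of $t_i$, and combined with the $t_i^{-\alpha}$ gain from $(a_i)$ should yield an overall exponent strictly smaller than $-\tfrac32\alpha$, i.e.\ precisely the target $t_{i+1}^{-\alpha}$.

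The tame bounds $(b_{i+1})$ and $(c_{i+1})$ are simpler: substitute the same decomposition into Lemma \ref{lemma: flow}(b)--(c) at indices $(x_b,y_b)$ and $(x_c,y_c)$ and use the high-index bounds on $X_i$ produced above; the only requirement here is that the growth not exceed $t_{i+1}^{\alpha}$. The main obstacle I anticipate is the bookkeeping: each composition of smoothing, homotopy, interpolation, bracket, and flow estimates contributes a specific polynomial in $t_i$ together with a specific shift in the indices $(n,k)$, and one must verify that the parameters $p,x_\bullet,y_\bullet,\alpha$ from \eqref{eq: parameters} are chosen large enough that all accumulated exponents of $t_i$ lie strictly below the target thresholds, uniformly in $i$. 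Once these finitely many arithmetic inequalities are checked, taking $t_0$ sufficiently large absorbs the constants $C$ and closes the induction.
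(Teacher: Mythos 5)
Your overall strategy (induction on $i$, chaining smoothing, homotopy and interpolation estimates, and extracting the improved decay of $Z_{i+1}$ from the Maurer--Cartan structure) matches the paper's. But the specific three-term decomposition you chose for $(a_{i+1})$,
\[Z_{i+1}=\bigl(\phi_{X_i}^*(\pi_{\mathfrak{g}})-\pi_{\mathfrak{g}}-[X_i,\pi_{\mathfrak{g}}]\bigr)+\bigl(\phi_{X_i}^*(Z_i)-Z_i\bigr)+\bigl(Z_i-\dif_{\pi_{\mathfrak{g}}}X_i\bigr),\]
creates a gap you cannot close with Lemma \ref{lemma: flow} as stated. To estimate your second term you invoke Lemma \ref{lemma: flow}(c) with $W=Z_i$, and to ``absorb the extra $\phi_{X_i}^*$'' in your first term you invoke it with $W=[X_i,\pi_{\mathfrak{g}}]$. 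In both cases the constant $C_W$ in Lemma \ref{lemma: flow}(c) depends on $W$, and here $W$ changes with $i$; the lemma gives no control on how $C_W$ grows. You are implicitly assuming the sharper, $W$-explicit bound stated only in the Remark after Lemma \ref{lemma: flow}, which the paper deliberately states without proof precisely because the argument below never needs it.

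The paper sidesteps this entirely by using the decomposition $Z_{i+1}=V_i+\phi_{X_i}^*(U_i)$ with
\[V_i:=\phi_{X_i}^*(\pi_{\mathfrak{g}})-\pi_{\mathfrak{g}}-\phi_{X_i}^*([X_i,\pi_{\mathfrak{g}}]),\qquad U_i:=Z_i-[\pi_{\mathfrak{g}},X_i].\]
Here Lemma \ref{lemma: flow}(c) is applied \emph{only} with the fixed $W=\pi_{\mathfrak{g}}$ (so $C_{\pi_{\mathfrak{g}}}$ is a genuine constant), and $\phi_{X_i}^*(U_i)$ is controlled by Lemma \ref{lemma: flow}(b), whose constant does not depend on the flat multivector being pulled back. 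The quadratic smallness of $U_i$ is then read off from the cohomological identity $U_i=[\pi_{\mathfrak{g}},(\id-S_i)h^1_{r_i}(Z_i)]-\tfrac12 h^2_{r_i}([Z_i,Z_i])$, as in your third term. So the decomposition is not a cosmetic choice: keeping the pullback $\phi_{X_i}^*$ on the coboundary term is exactly what lets one use (b) instead of (c) and avoid all $i$-dependent constants. If you want to keep your decomposition, you would first need to prove the sharper Remark (which is a strengthening of the lemma requiring a more careful version of its proof); otherwise I'd recommend switching to the paper's grouping. The same caveat applies to your estimates for $(b_{i+1})$ and $(c_{i+1})$: the paper proves those by applying (b) to $\phi_{X_i}^*(Z_i)$ and (c) only to $\phi_{X_i}^*(\pi_{\mathfrak{g}})-\pi_{\mathfrak{g}}$, never to differences whose $W$ varies with $i$.
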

\begin{proof}
Note that $(a_0) - (c_0)$ follow from \eqref{eq: starting assumption}. We show
\begin{align*}
(a_i)&\ \Rightarrow (d_i) \\
(a_i)\land (b_i)\land (c_i)\land (d_i)&\ \Rightarrow (a_{i+1})\land (b_{i+1})\land (c_{i+1}). 
\end{align*}

The following estimate will be used repeatedly:
\begin{lemma}
For all $0\leq  l \le n$ and $0
\leq j\le k+2l$, we have:
\begin{equation}\label{eq: Xk estimate}
        \fnorm{X_i}_{n,k,r_i}\le C t^{{12p}l+j}_i\fnorm{Z_i}_{n-l+p-1,k+2l-j +(p-1)(n-l),r_i}.
\end{equation}
\end{lemma}
\begin{proof} 
We use \eqref{eq: smoothing 1} and \eqref{eq: homotopy family estimate flat}: 
\begin{align*}
        \fnorm{X_i}_{n,k,r_i}&\, =\fnorm{S_i(h^1_{r_i}(Z_i))}_{n,k,r_i}  \le C t^{{12p}l+j}_i\fnorm{h^1_{r_i}(Z_i)}_{ n-l,k+2l-j,r_i}\\
        &\,\le C t^{{12p}l+j}_i\fnorm{Z_i}_{n-l+p-1,k+2l-j +(p-1)(n-l),r_i}\qedhere
\end{align*}
\end{proof}

Note that, by choosing $t_0$ sufficiently large, we have 
\[t_i^{-1}\le r_i-r_{i+1}=\frac{R-r}{(i+1)(i+2)}, \quad \textrm{for all}\ \ i\geq 0.\]
By using \eqref{eq: Xk estimate} for $l=j=0$ and \eqref{eq: ai} we get:
\begin{align}\label{eq: xo}
 \fnorm{X_i}_{1,0,r_i}&\,\le C \fnorm{Z_i}_{p,p-1,r_i}\quad \ \le \ C t^{-\alpha}_i\\
&\, \leq C t_i^{ 1-\alpha}(r_i-r_{i+1}) \quad < \ \theta \cdot (r_i-r_{i+1}),\nonumber
\end{align}
where in the last step we have used that $t_i^{1-\alpha}\leq t_0^{1-\alpha}< \theta/C$, and $\theta$ is the constant from Lemma \ref{lemma: flow}. This inequality and Lemma \ref{lemma: flow} imply \eqref{eq: di}.

$\noindent{\underline{(c_{i+1})}:}$ we use estimates (b) and (c) in Lemma \ref{lemma: flow} to obtain
 \begin{align}
    \fnorm{Z_{i+1}}_{x_c,y_c,r_{i+1}}= &\  \fnorm{\phi_{X_i}^*(Z_i)+\phi_{X_i}^*(\pi_{\g})-\pi_{\g}}_{x_c,y_c,r_{i+1}}\nonumber\\
    \le &\ C\Big(\fnorm{Z_i}_{x_c,y_c,r_i} (1+\fnorm{X_i}_{x_c+1,0,r_i})\label{eq: ek estimate}\\ 
    &\ \ \ +\fnorm{X_i}_{x_c+1,y_c,r_i}(1+\fnorm{X_i}_{x_c+1,0,r_i})\Big)\nonumber
\end{align}
Using \eqref{eq: Xk estimate} with $l=x_c$ and $j=0$ and that: 
\[p= x_a\qquad p-1+2x_c\leq y_a,\qquad {12p}x_c\leq \alpha, \]
the term $\fnorm{X_i}_{x_c+1,0,r_i}$ can be estimated by
\begin{equation}\label{eq: Xkb1 estimate}
    \fnorm{X_i}_{x_c+1,0,r_i}\ \stackrel{\eqref{eq: Xk estimate}}{\le}\ C t_i^{{12p}x_c}\fnorm{Z_i}_{x_a,y_a,r_i} \ \stackrel{\eqref{eq: ai}}{\le} \ Ct_i^{{12p}x_c-\alpha } \quad \le \quad C.
\end{equation}
Applying \eqref{eq: Xk estimate} with $l=p$ and $j=(p-1)(x_c-p+1)+2p$ we obtain
\begin{equation}\label{eq: k mixed}
        \fnorm{X_i}_{x_c+1,y_c,r_i} \stackrel{\eqref{eq: Xk estimate}}{\le}  Ct_i^{({12p}+2)p+(p-1)(x_c-p+1)}\fnorm{Z_i}_{x_c,y_c,r_i} 
\end{equation}
Hence we get from \eqref{eq: ek estimate} the estimate 
\begin{align*}
    \fnorm{Z_{i+1}}_{x_c,y_c,r_{i+1}}\quad \le \quad &  C(1+t_i^{({12p}+2)p+(p-1)(x_c-p+1)})\fnorm{Z_i}_{x_c,y_c,r_i}\\
    \stackrel{\eqref{eq: ci}}{\le}\ \ &  Ct_i^{13p^2+2p-1+\alpha}\ \ \le \ \  Ct_i^{-\epsilon} t_{i+1}^{\alpha} \ \ \le \ \ Ct_0^{-\epsilon }t_{i+1}^{\alpha}
\end{align*}
where we have used that: 
\begin{equation}\label{eq: random ie2}
   13p^2+2p-1< \frac{1}{2}\alpha,
\end{equation}      
and we have denoted by $\epsilon>0$ a number smaller than the difference. By taking $t_0$ large, enough, we can make $Ct_0^{-\epsilon}<1$, and so $(c_{i+1})$ holds. We will use tacitly a similar argumentation later on.\\

$\noindent{\underline{(b_{i+1})}:}$ we use again estimates (b) and (c) in Lemma \ref{lemma: flow} to obtain
\begin{align}
    \fnorm{Z_{i+1}}_{x_b,y_b,r_{i+1}}= &\ \fnorm{\phi_{X_i}^*(Z_i)+\phi_{X_i}^*(\pi_{\g})-\pi_{\g}}_{x_b,y_b,r_{i+1}}\nonumber\\
    \le &\ C\big(\fnorm{Z_i}_{x_b,y_b,r_i}+\fnorm{X_i}_{x_b+1,y_b,r_i}\label{eq: ck estimate}\\ 
    &\ \ +(\fnorm{Z_i}_{0,y_b,r_i}+\fnorm{X_i}_{1,y_b,r_i})\fnorm{X_i}_{x_b+1,0,r_i}\big)\nonumber
\end{align}
For the first term in \eqref{eq: ck estimate} we immediately get
\begin{equation*}
    \fnorm{Z_i}_{x_b,y_b,r_i}\stackrel{\eqref{eq: bi}}{\le} Ct^{\alpha}_i \quad \le \quad Ct_0^{-\epsilon}t^{\alpha}_{i+1}
\end{equation*}
The second term is estimated exactly as \eqref{eq: k mixed} and by using \eqref{eq: bi}:
\begin{align*}
 \fnorm{X_i}_{x_b+1,y_b,r_i}  \stackrel{\eqref{eq: Xk estimate}}{\le}  & Ct_i^{
 ({12p}+2)p+(p-1)(x_b-p+1)}\fnorm{Z_i}_{x_b,y_b,r_i}  \\
    \stackrel{\eqref{eq: bi}}{\le } \ & 
     Ct_i^{24p^2-9p-1+\alpha}\le \ C t_0^{-\epsilon} t^{\alpha }_{i+1} 
\end{align*}
where we have used that:
\begin{equation*}
         24p^2-9p-1< \frac{1}{2}\alpha .
\end{equation*}
We move to the last term. Using \eqref{eq: Xk estimate} with $l=p-1$ and $j=0$ we obtain:
\begin{equation}\label{eq: Xkq1 estimate}
    \fnorm{X_i}_{x_b+1,0,r_i}\stackrel{\eqref{eq: Xk estimate}}{\le} Ct_i^{12p^2} \fnorm{Z_i}_{x_b,2p+(p-1)(x_b-p+1),r_i} \stackrel{\eqref{eq: bi}}{\le}  Ct_i^{{12p^2}+\alpha} 
\end{equation}
where we used that \[2p+(p-1)(x_b-p+1)\le y_b.\] 

Using \eqref{eq: Xk estimate} for $l=j=0$ and interpolation in the weights, we obtain
\begin{align}\label{eq: X1 estimate}
    \fnorm{X_i}_{1,y_b,r_i} \quad \stackrel{\eqref{eq: Xk estimate}}{\le} \quad & C\fnorm{Z_i}_{p,y_b+p-1,r_i}\nonumber \\
    \stackrel{\eqref{eq: interpolation s}}{\le}\quad & C \fnorm{Z_i}_{p,y_a,r_i}^{\frac{y_c-y_b-p+1}{y_c-y_a}}\fnorm{Z_i}_{p,y_c,r_i}^{\frac{y_b+p-1-y_a}{y_c-y_a}}\\
    \stackrel{\eqref{eq: ai},\eqref{eq: ci}}{\le}&Ct_i^{-\alpha \frac{y_c+y_a-2(y_b+p-1)}{y_c-y_a}}\nonumber
\end{align}
The above also yields:
\begin{align}
    \fnorm{Z_i}_{0,y_b,r_i}\quad \le\quad &  \fnorm{Z_i}_{p,y_b+p-1,r_i} \le \ C t_{i}^{-\alpha \frac{y_c+y_a-2(y_b+p-1)}{y_c-y_a}} \label{eq: z1q2}
\end{align}
Combining \eqref{eq: Xkq1 estimate}, \eqref{eq: X1 estimate} and \eqref{eq: z1q2}, the last term in \eqref{eq: ck estimate} is bounded by
\begin{align*}
    C t_i^{{12p^2}+2\alpha \frac{y_b-y_a+p-1}{y_c-y_a}} \ \le \ Ct_0^{-\epsilon} t_{i+1}^{\alpha}
\end{align*}
where we have used that 
\[12p^2+2\alpha \frac{y_b-y_a+p-1}{y_c-y_a}< \frac{3}{2}\alpha.\]
Adding up these inequalities, we obtain that $(b_{i+1})$ holds.\\

$\noindent{\underline{(a_{i+1})}:}$ we write $Z_{i+1}=V_i +\phi_{X_i}^*(U_i)$, where
\begin{equation}\label{eq: split Z}
    V_i:= \phi_{X_i}^*(\pi_{\g})-\pi_{\g}-\phi_{X_i}^*([X_i,\pi_{\g}]), \quad \text{ and }\quad U_i:=Z_i -[\pi_{\g},X_i].
\end{equation}
Applying (c) in Lemma \ref{lemma: flow} we obtain for $V_i$ the estimate
\begin{align}\label{eq: vi estimate}
    \fnorm{V_i}_{x_a,y_a,r_{i+1}} \ \le \ C( \fnorm{X_i}_{x_a+1,y_a,r_i}\fnorm{X_i}_{1,0,r_i}+\fnorm{X_i}_{1,y_a,r_i}\fnorm{X_i}_{x_a+2,0,r_i})
\end{align}
Applying \eqref{eq: Xk estimate} with $l=p$ and $j=3p-1$ yields the estimate
\begin{align}\label{eq: Xsas}
    \fnorm{X_i}_{x_a+1,y_a,r_i}&\ \stackrel{\eqref{eq: Xk estimate}}{\le} Ct^{12p^2+3p-1}_i\fnorm{Z_i}_{x_a,y_a,r_i}\nonumber \\
    &\ \ \stackrel{\eqref{eq: ai}}{\le}\ Ct^{12p^2+3p-1-\alpha}_i.
\end{align}
From \eqref{eq: Xk estimate} for $l=0$ and $j=p-1$ we obtain
\begin{equation}\label{eq: Xsasb}
\fnorm{X_i}_{1,y_a,r_i}\stackrel{\eqref{eq: Xk estimate}}{\le} Ct^{p-1}_i\fnorm{Z_i}_{x_a,y_a,r_i}\stackrel{\eqref{eq: ai}}{\le}Ct^{p-1-\alpha}_i
\end{equation}
Using \eqref{eq: Xk estimate} with $l=p+1$ and $j=0$ provides for $\fnorm{X_i}_{x_a+2,0,r_i}$ the estimate
\begin{equation}\label{eq: Xs0}
    \fnorm{X_i}_{x_a+2,0,r_i} \stackrel{\eqref{eq: Xk estimate}}{\le} Ct^{12p(p+1)}_i\fnorm{Z_i}_{x_a,3p+1,r_i} \stackrel{\eqref{eq: ai}}{\le}Ct^{{12p}(p+1)-\alpha}_i
\end{equation}
where we used that $3p+1\leq y_a$. 

Combining \eqref{eq: vi estimate}, \eqref{eq: xo} and \eqref{eq: Xsas} - \eqref{eq: Xs0} we obtain for $V_i$ the estimate
\begin{align}\label{eq: Vk}
    \fnorm{V_i}_{x_a,y_a,r_{i+1}} \ &\le \ C (t^{12p^2+3p-1-2\alpha}_i+
    t^{{12p}(p+1)+p-1-2\alpha}_i
    )\nonumber\\ 
    &\le \ C t_i^{12p^2+13p-1-2\alpha}\ \le \  Ct^{-\epsilon}_0 t^{-\alpha }_{i+1} 
\end{align}
where we have used:
\begin{equation}\label{eq: not very strict inequality}
12p^2 +13p-1 < \frac{1}{2} \alpha.
\end{equation}
 
For $\phi_{X_i}^*(U_i)$ we use Lemma \ref{lemma: flow} (b), \eqref{eq: Xs0} and $12p(p+1)\leq \alpha$ to obtain
\begin{align}\label{eq: Uk estimate}
    \fnorm{\phi^*_{X_i}(U_i)}_{x_a,y_a,r_{i+1}}\le &\ C\fnorm{U_i}_{x_a,y_a,r_i}(1+\fnorm{X_i}_{x_a+1,0,r_i})
    \nonumber \\ 
    \le& \ C\fnorm{U_i}_{x_a,y_a,r_i}
\end{align}
To estimate this term, we write $U_i$ as
\begin{align}\label{eq: uk representation}
         U_i& \,= Z_i-[\pi_{\g},X_i] 
        = [\pi_{\g},(\id-S_i)(h^1_{r_i}(Z_i))]-\frac{1}{2}h^2_{r_i}([Z_i,Z_i])
\end{align}
where we used the homotopy identity and that $\pi_i$ is Poisson, i.e.
\begin{align*}
    \dif_{\pi_{\g}}Z_i +\frac{1}{2}[Z_i,Z_i]=0.
\end{align*}
For the first term we apply the Lemma \ref{lemma: SN} to get
\begin{equation}\label{eq: uk1}
    \fnorm{[\pi_{\g},(\id-S_i)(h^1_{r_i}(Z_i))]}_{x_a,y_a,r_i} \le C \, \fnorm{(\id-S_i)(h^1_{r_i}(Z_i))}_{x_a+1,y_a,r_i}
\end{equation}
Using the second smoothing inequality \eqref{eq: smoothing 2} with the parameters
\[  l:=11p \qquad \text{ and }\qquad  j:=y_c-2p^2\geq 0\]
and the bound for the homotopy operators \eqref{eq: homotopy family estimate flat}, we get: 
\begin{align}
    \fnorm{(\id-S_i)(h^1_{r_i}(Z_i))}&_{x_a+1,y_a,r_i} \nonumber\\
    \le  C \big(&t_i^{-132p^2}\, \fnorm{h^1_{r_i}(Z_i)}_{x_b-p+1,y_a-22p,r_i}\nonumber\\
     +&\, t_i^{-y_c+2p^2}\fnorm{h^1_{r_i}(Z_i)}_{x_a+1,y_c-p^2,r_i}\big)\nonumber \\
     \le  C \big(&t_i^{-132p^2}\, \fnorm{Z_i}_{x_b,y_a +(p-1)(x_b-p+1)-22p,r_i}\nonumber\\
     +&\, t_i^{-y_c+2p^2}\fnorm{Z_i}_{x_a+p,y_c-1,r_i}\big)\label{eq: uk1e}\\
    \le   C( t_i^{-132p^2}&\, \fnorm{Z_i}_{x_b,y_b,r_i} +t_i^{-y_c+2p^2}\fnorm{Z_i}_{x_c,y_c,r_i})\nonumber\\
    \le  C( t_i^{-132p^2}&\,+t_i^{-y_c+2p^2})t_i^{\alpha} \ \le \ Ct_0^{-\epsilon}t_{i+1}^{-\alpha}\nonumber
\end{align}
where we used \eqref{eq: bi}, \eqref{eq: ci}, the relations 
\begin{align*}
    22p \le y_a, \qquad  x_a+p = x_c,\qquad
    y_a +(p-1)(x_b-p+1)-22p\le y_b, 
\end{align*}
and that we have
\begin{align*}
    -132p^2&\ < \ -\frac{5}{2}\alpha, \qquad \text{ and } \qquad \
    -y_c+2p^2\ < \  -\frac{5}{2}\alpha. 
\end{align*}
For the second term in \eqref{eq: uk representation} we estimate the Schouten-Nijenhuis bracket similarly as in the proof of Lemma \ref{lemma: SN} to obtain
\begin{align}\label{eq: uk2}
        \fnorm{\frac{1}{2}h^2_{r_i}([Z_i,Z_i])}_{x_a,y_a,r_i}&\, \le C \fnorm{[Z_i,Z_i]}_{x_a+p-1,y_a+(p-1)x_a,r_i} \nonumber\\
        &\, \le  C \sum_{j=0}^{x_a}\fnorm{Z_i}_{x_a-j,y_a,r_i} \fnorm{Z_i}_{p+j,(p-1)x_a,r_i}\nonumber\\
        &\, \le  C \fnorm{Z_i}_{x_a,y_a,r_i} \fnorm{Z_i}_{p+x_a,y_a,r_i}  \\
        &\, \le Ct_i^{-\alpha}\fnorm{Z_i}_{x_a,y_a,r_i}^{1-\frac{p}{x_b-x_a}}\fnorm{Z_i}_{x_b,y_a,r_i}^{\frac{p}{x_b-x_a}}\nonumber\\
        &\, \le 
        C t_i^{-2\alpha(1-\frac{p}{x_b-x_a})}\, \le 
        Ct_0^{-\epsilon}t_{i+1}^{-\alpha}\nonumber
\end{align}
where we used the interpolation \eqref{eq: interpolation t} and the relations
\[
  (p-1)x_a\le y_a\le y_b, \qquad
4p< (x_b-x_a). 
\]
Finally, using \eqref{eq: Xs0}, \eqref{eq: uk1e} and \eqref{eq: uk2} we can estimate \eqref{eq: Uk estimate} by
\begin{align*}
    \fnorm{\phi^*_{X_i}(U_i)}_{x_a,y_a,r_{i+1}}\le &\ C\fnorm{U_i}_{x_a,y_a,r_i}\   \le \ C t^{-\epsilon}_0t_{i+1}^{-\alpha}
\end{align*}
This estimate together with \eqref{eq: Vk} proves $(a_{i+1})$ and hence Lemma \ref{lemma: well-definedness}.
\end{proof}

\begin{remark}
The constant $t_0>1$ will be chosen according to Lemma \ref{lemma: well-definedness}, and such that it satisfies two more estimates, which will be explained in Subsection \ref{subsection:Convergence}.
\end{remark}

\section{Proof of Theorem \ref{theorem: rigidity}}\label{section: convergence}
The goal of this section is to complete the proof of Theorem \ref{theorem: rigidity}. In order to achieve this we use methods developed in \cite{LoZe} to show the the sequence 
\[ \Phi_i :=\phi_{X_0} \circ \dots \circ \phi_{X_i}|_{\overline{B}_r} : \overline{B}_r \to B_R \]
converges to local diffeomorphism which is moreover Poisson map.

\subsection{Improved estimates}
We assume that $\pi$ satisfies \eqref{eq: starting assumption}. An important difference to the previous section is that the various constants $C>0$ may depend on $\pi$. In the following we prove versions of the estimates \eqref{eq: ai}-\eqref{eq: ci} for general degrees. We start with the one of type \eqref{eq: ci}.

\begin{lemma}\label{lemma: ekc}
For all $y\in \N_0$ there exists $C=C_{y}(\pi)>0$ such that 
\begin{align}\label{eq: ekc}
\fnorm{Z_i}_{x_c,y,r_i}\, \le\, C\,  t_{i}^{\alpha}
\end{align}
for all $i\in \N_0$.
\end{lemma}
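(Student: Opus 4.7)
The plan is to imitate the argument for $(c_{i+1})$ in Lemma \ref{lemma: well-definedness}, replacing the specific weight $y_c$ by an arbitrary $y\in\N_0$ and allowing the constants to depend on $y$ and on $\pi$. I would proceed by induction on $i\geq 0$. The base case $i=0$ is immediate: since $Z_0 = \pi - \pi_{\g}|_{\overline{B}_R}$ is smooth and flat at the origin on the compact set $\overline{B}_R$, the quantity $\fnorm{Z_0}_{x_c,y,R}$ is a finite number depending only on $\pi$ and $y$.

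For the inductive step, I would apply parts (b) and (c) of Lemma \ref{lemma: flow} to the decomposition $Z_{i+1} = \phi_{X_i}^*(Z_i) + \bigl(\phi_{X_i}^*(\pi_{\g})-\pi_{\g}\bigr)$; this is the analogue of \eqref{eq: ek estimate} at the arbitrary weight $y$. The factor $\fnorm{X_i}_{x_c+1,0,r_i}$ is controlled uniformly by \eqref{eq: Xkb1 estimate}, and the new ingredient is a bound on $\fnorm{X_i}_{x_c+1,y,r_i}$, obtained by applying the smoothing inequality \eqref{eq: smoothing 1} with parameters $l=p$ and $j=p^2+2p-1$ (so that the resulting weight on $Z_i$ is exactly $y$), followed by the homotopy estimate \eqref{eq: homotopy family estimate flat}, exactly as in the derivation of \eqref{eq: k mixed}. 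A key observation is that the resulting smoothing exponent $13p^2+2p-1$ does not depend on $y$, since the weight transformations are additive. This will yield the iteration
\[
\fnorm{Z_{i+1}}_{x_c,y,r_{i+1}} \leq D_y \cdot t_i^{13p^2+2p-1} \cdot \fnorm{Z_i}_{x_c,y,r_i},
\]
with a constant $D_y>0$ depending on $y$ but not on $i$ nor on $\pi$.

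The main (mild) obstacle will be closing the induction with a constant $C_y(\pi)$ independent of $i$. In Lemma \ref{lemma: well-definedness} the analogous step was closed by choosing $t_0$ large enough to make $Ct_0^{-\epsilon}<1$, but here $t_0$ has already been fixed and $D_y$ may exceed $t_0^{\epsilon}$. To remedy this, I would set $u_i := \fnorm{Z_i}_{x_c,y,r_i}/t_i^{\alpha}$ and rewrite the recurrence as $u_{i+1} \leq D_y\, t_i^{-\epsilon} u_i$ with $\epsilon := \tfrac{\alpha}{2}-(13p^2+2p-1) > 0$, which is positive by \eqref{eq: random ie2}. Iterating this and using $t_i = t_0^{(3/2)^i}$ gives $u_i \leq D_y^i\, t_0^{-2\epsilon((3/2)^i-1)}\, u_0$; the double-exponential decay of $t_0^{-2\epsilon(3/2)^i}$ dominates the merely geometric growth of $D_y^i$, so this prefactor is bounded uniformly in $i$. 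Taking $C_y(\pi)$ equal to $\fnorm{Z_0}_{x_c,y,R}\cdot t_0^{-\alpha}$ times the supremum of this prefactor over $i$ then closes the induction and completes the proof.
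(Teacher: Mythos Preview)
Your proposal is correct and follows essentially the same route as the paper: both derive the recursion $\fnorm{Z_{i+1}}_{x_c,y,r_{i+1}}\le C\,t_i^{13p^2+2p-1}\fnorm{Z_i}_{x_c,y,r_i}$ from the flow estimates, \eqref{eq: Xkb1 estimate}, and the analogue of \eqref{eq: k mixed} at weight $y$, and then iterate it using that $13p^2+2p-1<\tfrac{\alpha}{2}$ so that the accumulated constant $C^{\,i}$ is absorbed by the super-geometric decay of $t_i^{-\epsilon}$. Your normalization $u_i:=\fnorm{Z_i}_{x_c,y,r_i}/t_i^{\alpha}$ is just a repackaging of the paper's direct product estimate $\fnorm{Z_{i+1}}_{x_c,y,r_{i+1}}\le C^{\,i+1}(t_0\cdots t_i)^{s}\fnorm{Z_0}_{x_c,y,R}\le C^{\,i+1}t_{i+1}^{2s}\fnorm{Z_0}_{x_c,y,R}$.
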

\begin{proof}
Note that for $i=0$, there is nothing to prove. We now use an iteration in $i$ and basically redo the estimate for \eqref{eq: ci} in the proof of Lemma \ref{lemma: well-definedness}. Similar as in \eqref{eq: ek estimate} we apply the estimates (b) and (c) in Lemma \ref{lemma: flow} and \eqref{eq: Xkb1 estimate} to obtain
\begin{align*}
       \fnorm{Z_{i+1}}_{x_c,y,r_{i+1}}\le      C(\fnorm{Z_i}_{x_c,y,r_i} +\fnorm{X_i}_{x_c+1,y,r_i}).
     \end{align*}
For the second term we use that equation \eqref{eq: k mixed} holds for general $y$, and we obtain the recursive formula:
\begin{equation*}
\fnorm{Z_{i+1}}_{x_c,y,r_{i+1}}\le C\, t_i^{s}\fnorm{Z_{i}}_{x_c,y,r_{i}},
\end{equation*}
where $s:=11p^2+4p+1+(p-1)x_c$. Iterating this inequality, we obtain:
\begin{align*}
\fnorm{Z_{i+1}}_{x_c,y,r_{i+1}}&\le C^{i+1}(t_0\cdot\ldots \cdot t_i)^{s}\fnorm{Z_{0}}_{x_c,y,r_{0}}\\
&= C^{i+1}t_0^{s(1+\frac{3}{2}+\ldots +\frac{3^{i}}{2^i})}\fnorm{Z_{0}}_{x_c,y,r_{0}}\\
&\leq C^{i+1}t_{i+1}^{2s}\fnorm{Z_{0}}_{x_c,y,r_{0}}\\
& \leq (C^{i+1}t_{i+1}^{-\epsilon})t_{i+1}^{\alpha}\fnorm{Z_{0}}_{x_c,y,r_{0}}\\
& \leq Ct_{i+1}^{\alpha}\fnorm{Z_{0}}_{x_c,y,r_{0}}.
\end{align*}
where we have used that, by \eqref{eq: random ie2} $s<\frac{1}{2}\alpha$ and that $C^{i+1}t_{i+1}^{-\epsilon}$ can be bounded independently of $i$.
\end{proof}

In all degrees, we have:

\begin{lemma}\label{lemma: ckc}
For all $x,y \in \N_0$ there exists $C=C_{x,y}(\pi)>0$ such that
\begin{align}\label{eq: ckc}
    \fnorm{Z_i}_{x,y,r_i}\le C t_{i}^{\alpha+ 2(x-x_c)(p-1)}
\end{align}
for all $i\in \N_0$.
\end{lemma}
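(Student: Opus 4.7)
The plan is to iterate in $i$ in the spirit of Lemma~\ref{lemma: ekc}. Applying parts (b) and (c) of Lemma~\ref{lemma: flow} to the decomposition $Z_{i+1}=\phi_{X_i}^*(Z_i)+\bigl(\phi_{X_i}^*(\pi_{\g})-\pi_{\g}\bigr)$ produces
\[
\fnorm{Z_{i+1}}_{x,y,r_{i+1}}\le C\bigl(\fnorm{Z_i}_{x,y,r_i}+\fnorm{X_i}_{x+1,y,r_i}+\fnorm{Z_i}_{0,y,r_i}\fnorm{X_i}_{x+1,0,r_i}+\fnorm{X_i}_{1,y,r_i}\fnorm{X_i}_{x+1,0,r_i}\bigr).
\]
The dominant term $\fnorm{X_i}_{x+1,y,r_i}$ is controlled via \eqref{eq: Xk estimate} with $l=p$ and $j=2p+(p-1)(x+1-p)$, valid when $y\ge(p-1)(x+1-p)$, giving
\[
\fnorm{X_i}_{x+1,y,r_i}\le C\,t_i^{s(x)}\fnorm{Z_i}_{x,y,r_i},\qquad s(x):=11p^2+4p-1+(p-1)x.
\]

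Once all lower-order pullback terms are seen to be majorized by the target bound (see below), one obtains the multiplicative recursion $\fnorm{Z_{i+1}}_{x,y,r_{i+1}}\le C\,t_i^{s(x)}\fnorm{Z_i}_{x,y,r_i}$, which, iterating with $t_0t_1\cdots t_i\le t_{i+1}^2$, yields
\[
\fnorm{Z_i}_{x,y,r_i}\le C^{i+1}t_i^{2s(x)}\fnorm{Z_0}_{x,y,R}.
\]
A direct computation gives $2s(x)=2s(x_c)+2(p-1)(x-x_c)$, and $2s(x_c)=26p^2+4p-2<50p^2=\alpha$ for $p\ge 22$, so $2s(x)\le \alpha+2(p-1)(x-x_c)-\epsilon$ for some $\epsilon>0$. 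The factor $C^{i+1}t_i^{-\epsilon}$ is then uniformly bounded in $i$, establishing the claim in the range $y\ge(p-1)(x+1-p)$. The residual case $y<(p-1)(x+1-p)$ (which only arises for $x\ge p$) is reduced to the previous one at $y':=(p-1)(x+1-p)$ by the weight-monotonicity inequality $\fnorm{W}_{n,y,r_i}\le C_{y,y',r_i}\fnorm{W}_{n,y',r_i}$ valid for flat $W$ on the bounded ball $\overline{B}_{r_i}$.

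The main obstacle I anticipate is the careful treatment of the prefactor $\fnorm{X_i}_{x+1,0,r_i}$, which, unlike in the $x=x_c$ case handled by \eqref{eq: Xkb1 estimate}, is not uniformly bounded in $i$ for general $x$. Crucially, it appears only multiplied by low-derivative quantities: $\fnorm{Z_i}_{0,y,r_i}$ is bounded by $C t_i^\alpha$ using Lemma~\ref{lemma: ekc} together with the trivial monotonicity $\fnorm{W}_{0,y,r}\le\fnorm{W}_{x_c,y,r}$, and $\fnorm{X_i}_{1,y,r_i}$ is controlled similarly via \eqref{eq: Xk estimate}. One then bounds $\fnorm{X_i}_{x+1,0,r_i}$ by combining \eqref{eq: Xk estimate} with the bounds already available for $Z_i$ (from $(a_i)$-$(c_i)$, Lemma~\ref{lemma: ekc}, or an outer induction on $x$ with base case $x=x_c$) and verifies that the resulting products do not exceed the target exponent $\alpha+2(p-1)(x-x_c)$; this is a purely numerical check using the explicit constants in \eqref{eq: parameters} and the ample slack $24p^2-4p+2$ in $\alpha-2s(x_c)$.
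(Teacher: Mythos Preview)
Your overall structure is right: decompose via Lemma~\ref{lemma: flow}(b),(c), isolate the main term $\fnorm{X_i}_{x+1,y,r_i}$ via \eqref{eq: Xk estimate}, and iterate a multiplicative recursion as in Lemma~\ref{lemma: ekc}. The reduction for small $y$ by weight monotonicity is also fine. The problem lies in the cross terms.

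Your bound $\fnorm{Z_i}_{0,y,r_i}\le Ct_i^{\alpha}$ (and the analogous bound for $\fnorm{X_i}_{1,y,r_i}$) is too crude, and the ``purely numerical check'' you defer does not close. Any bound on $\fnorm{X_i}_{x+1,0,r_i}$ obtained from \eqref{eq: Xk estimate} together with $(a_i)$--$(c_i)$, Lemma~\ref{lemma: ekc}, or an outer induction on $x$ carries an exponent that grows at least linearly in $x$ with slope comparable to $p$; multiplying it by $t_i^{\alpha}$ produces a term whose exponent exceeds the target $\alpha+2(p-1)(x-x_c)$ by roughly $\alpha$ for moderate $x$, and by an amount growing in $x$ thereafter. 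The slack $\alpha-2s(x_c)$ you invoke cannot absorb a factor of size $t_i^{\alpha}$.

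The paper fixes this with two simple moves you did not make. First, use weight monotonicity (which you already noted) on the awkward factor itself: $\fnorm{X_i}_{x+1,0,r_i}\le C\,\fnorm{X_i}_{x+1,y,r_i}$ on $\overline{B}_{r_i}\subset\overline{B}_R$. This folds both cross terms into the main one, giving
\[
\fnorm{Z_{i+1}}_{x,y,r_{i+1}}\le C\Bigl(\fnorm{Z_i}_{x,y,r_i}+\fnorm{X_i}_{x+1,y,r_i}\bigl(1+\fnorm{Z_i}_{0,y,r_i}+\fnorm{X_i}_{1,y,r_i}\bigr)\Bigr).
\]
Second, bound the parenthesis by a constant \emph{independent of $i$}: from \eqref{eq: Xk estimate} with $l=j=0$ one has $\fnorm{X_i}_{1,y,r_i}\le C\fnorm{Z_i}_{p,y+p-1,r_i}=C\fnorm{Z_i}_{x_a,y+p-1,r_i}$, and then weight interpolation \eqref{eq: interpolation s} between $(a_i)$ (exponent $-\alpha$ at weight $y_a$) and Lemma~\ref{lemma: ekc} (exponent $+\alpha$ at an arbitrarily large weight) yields $\fnorm{Z_i}_{x_a,y+p-1,r_i}\le C$; the same argument bounds $\fnorm{Z_i}_{0,y,r_i}$. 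With the parenthesis $\le C$, the recursion is clean and iterates exactly as you described. No outer induction on $x$ is needed.
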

\begin{proof}
The proof is similar to the one of the previous lemma. Using estimates (b) and (c) in Lemma \ref{lemma: flow} we get
\begin{align}\label{eq: ckc estimate}
\fnorm{Z_{i+1}}_{x,y,r_{i+1}}\le &\ C(\fnorm{Z_i}_{x,y,r_i}+\fnorm{Z_i}_{0,y,r_i}\fnorm{X_i}_{x+1,0,r_i}\nonumber\\ 
    &\ \ \ +\fnorm{X_i}_{x+1,y,r_i}+\fnorm{X_i}_{1,y,r_i}\fnorm{X_i}_{x+1,0,r_i})
     \\
 \le &\ C(\fnorm{Z_i}_{x,y,r_i}+\fnorm{X_i}_{x+1,y,r_i}(1+\fnorm{Z_i}_{0,y,r_i}+\fnorm{X_i}_{1,y,r_i})).\nonumber
\end{align}
As in \eqref{eq: k mixed}, we estimate the term $\fnorm{X_i}_{x+1,y,r_i}$ by: 
\begin{equation*}
\fnorm{X_i}_{x+1,y,r_i} \le \ Ct_i^{11p^2+4p+1+(p-1)x}\fnorm{Z_i}_{x,y,r_i}.
\end{equation*}
Next, we have that: 
\begin{align*}        \fnorm{X_i}_{1,y,r_i} & \stackrel{\eqref{eq: Xk estimate}}{\le}
C\fnorm{Z_i}_{p,y+p+1,r_i} \\
        & \stackrel{\eqref{eq: interpolation s}}{\le}  C \fnorm{Z_i}_{x_a,y_a,r_i}^{\frac{y+y_a-p+1}{2y}}\fnorm{Z_i}_{x_a,2y+y_a,r_i}^{\frac{y+p-1-y_a}{2y}}\\
        &\stackrel{\eqref{eq: ai},\eqref{eq: ekc}}{\le}  C t_i^{-\alpha\frac{y_a-p+1}{y}}\leq C,
\end{align*}
where we have assumed that $y+p-1\geq y_a$ -- otherwise apply \eqref{eq: ai} as the second step. The above also implies:
\begin{align*}        \fnorm{Z_i}_{0,y,r_i} \leq C.
\end{align*}
Using the last three inequalities in \eqref{eq: ckc estimate}, we obtain:
\begin{equation*}
\fnorm{Z_{i+1}}_{x,y,r_{i+1}} \le \ Ct_i^{s+(x-x_c)(p-1)}\fnorm{Z_i}_{x,y,r_i},
\end{equation*}
where, $s=11p^2+4p+1+(p-1)x_c$. As in the proof of the previous lemma, we iterate this relation, and we obtain the result. 
\end{proof}

We give now a new version of \eqref{eq: ai}:
\begin{lemma}\label{lemma: bkc}
For all $N \in \N_0 $ there exists $C=C_N(\pi)>0$ such that 
\begin{align}
 \fnorm{Z_i}_{x_a,y_a,r_i}\le&\  C t_{i}^{-N}\label{eq: bkc}
\end{align}
for all $i\in \N_0$.
\end{lemma}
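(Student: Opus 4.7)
The approach is induction on $N$, using \eqref{eq: ai} as the base case $N = \alpha$. For the inductive step, I would assume that
\[
\fnorm{Z_i}_{x_a,y_a,r_i} \le C_N\, t_i^{-N}
\]
holds uniformly in $i$ and rerun the proof of $(a_{i+1})$ from Lemma \ref{lemma: well-definedness}, now exploiting this sharper hypothesis together with the weight-flexible bounds of Lemmas \ref{lemma: ekc} and \ref{lemma: ckc}. The outcome will be an analogous estimate with $N$ replaced by some $N'>N$; iterating the improvement finitely many times produces arbitrarily large exponents. The initial case $i=0$ is handled separately from \eqref{eq: starting assumption} together with $t_0 > 1$.

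Concretely, for each $i$ I would decompose $Z_{i+1} = V_i + \phi_{X_i}^*(U_i)$ as in \eqref{eq: split Z}. The term $V_i$ brings in two factors of $X_i$ via Lemma \ref{lemma: flow}(c); each is controlled by \eqref{eq: Xk estimate} and the inductive hypothesis, producing a bound $C\, t_i^{K-2N}$ for a fixed $K = K(p)$. The quadratic piece $\tfrac{1}{2} h^2_{r_i}([Z_i,Z_i])$ of $U_i$ is handled by the Schouten estimate (Lemma \ref{lemma: SN}) and the homotopy estimate \eqref{eq: homotopy family estimate flat}, reducing matters to the product $\fnorm{Z_i}_{x_a,y_a,r_i} \cdot \fnorm{Z_i}_{x_a+p,y_a,r_i}$; interpolating the second factor between the inductive hypothesis at $(x_a,y_a)$ and the weight-insensitive bound of Lemma \ref{lemma: ckc} at $(x_b,y_a)$ yields a bound $C\, t_i^{K' - \mu N}$ with $\mu := 2 - p/(x_b-x_a) = 23/12 > 1$. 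Finally, for the smoothing piece $[\pi_{\g}, (\id - S_i)(h^1_{r_i}(Z_i))]$, I would apply \eqref{eq: smoothing 2} with the free parameter $j$ chosen arbitrarily large; the resulting norm $\fnorm{h^1_{r_i}(Z_i)}_{x_a+1, y_a+j, r_i}$ is controlled via the homotopy estimate and Lemma \ref{lemma: ckc}, whose $t_i$-growth depends only on the derivative order and \emph{not} on the weight. This lets us make the smoothing contribution $\le C_{N,j}\, t_i^{-M}$ for any prescribed $M$, at the sole cost of enlarging the constant.

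Converting to the scale $t_{i+1} = t_i^{3/2}$, the three contributions combine to give $\fnorm{Z_{i+1}}_{x_a,y_a,r_{i+1}} \le C_{N'}\, t_{i+1}^{-N'}$ with $N' = \tfrac{2\mu}{3} N - K''$, which is strictly larger than $N$ whenever $N$ exceeds a fixed threshold ($N > 3K''/(2\mu - 3)$); since the starting value $N = \alpha = 50p^2$ is well above it, iteration drives $N_k$ to infinity. The main obstacle I expect is the careful coordination of parameters to guarantee the \emph{strict} positive gain at every inductive stage: one must choose the smoothing parameter $j$ growing with $N$ so that the smoothing contribution does not obstruct the improvement coming from the quadratic terms, and similarly pick the interpolation weights so that the fixed $t_i$-power $K''$ incurred from Lemma \ref{lemma: ckc} is indeed dominated by the linear gain in $N$. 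The constants $C_N$ themselves will grow at least doubly exponentially in $N$, but this is acceptable since the statement requires only the existence of some finite constant for each individual $N$.
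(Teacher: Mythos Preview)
Your strategy is the same as the paper's: induction starting at $N=\alpha$ (from $(a_i)$), the decomposition $Z_{i+1}=V_i+\phi_{X_i}^*(U_i)$, and the same three pieces. The paper steps from $N$ to $N+1$, and for the quadratic term it interpolates at a free endpoint $x\to\infty$ (so that the coefficient $2-\tfrac{p}{x-x_a}\to 2$) rather than your fixed choice $x=x_b$ with $\mu=23/12$; your variant is fine once the constants are checked at $N=\alpha$.

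There is, however, a genuine gap in your treatment of the smoothing piece. Inequality \eqref{eq: smoothing 2} has \emph{two} terms,
\[
\fnorm{(\id-S_i)(h^1_{r_i}(Z_i))}_{x_a+1,\,y_a,\,r_i}\ \le\ C\big(t_i^{-12pl}\fnorm{h^1_{r_i}(Z_i)}_{x_a+1+l,\,y_a-2l,\,r_i}\ +\ t_i^{-j}\fnorm{h^1_{r_i}(Z_i)}_{x_a+1,\,y_a+j,\,r_i}\big),
\]
and you only discuss the $t_i^{-j}$ contribution. Sending $j\to\infty$ does nothing to the first term; if $l$ is held fixed, that term carries a \emph{fixed} power of $t_i$ after applying the homotopy estimate and Lemma \ref{lemma: ckc}, so the smoothing contribution cannot be made $\le C\,t_i^{-M}$ for arbitrary $M$ as you claim. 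You must also let $l$ grow with $N$. After the homotopy estimate the first term becomes $C\,t_i^{-12pl}\fnorm{Z_i}_{x_a+p+l,\ \bullet,\ r_i}$, whose derivative order grows with $l$; Lemma \ref{lemma: ckc} then contributes $t_i^{\alpha+2(p-1)l+\mathrm{const}}$, which is beaten by $t_i^{-12pl}$ since $12p>2(p-1)$, giving a net exponent $-(10p+2)l+e$ that can be made arbitrarily negative. This is exactly what the paper does. With this correction your sketch goes through.
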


\begin{proof}
By \eqref{eq: ai} it holds for $N=\alpha$. We show that the estimate for $N\geq \alpha$ implies the estimate for $N+1$. For this, we go again through the estimates in the proof of \eqref{eq: ai}. So we write $Z_{i+1}=V_i +\phi_{X_i}^*(U_i)$ as defined in \eqref{eq: split Z}. To estimate $V_i$
we replace \eqref{eq: ai} by inequality \eqref{eq: bkc}
in the proof of inequalities \eqref{eq: xo}, 
\eqref{eq: Xs0}, \eqref{eq: Xsas} and \eqref{eq: Xsasb}, and then obtain:
\begin{align*}
 \fnorm{X_i}_{1,0,r_i}&\,\le C t^{-N}_i\\
    \fnorm{X_i}_{x_a+2,0,r_i}&\, \le Ct^{{12p}(p+1)-N}_i\\ 
\fnorm{X_i}_{x_a+1,y_a,r_i}&\, \le Ct^{12p^2+3p-1-N}_i\\
\fnorm{X_i}_{1,y_a,r_i}&\, \le Ct^{p-1-N}_i
\end{align*}
Hence we can estimate $V_i$ using \eqref{eq: vi estimate} to obtain: 
\begin{equation}\label{eq: vi final}
\fnorm{V_i}_{x_a,y_a,r_{i+1}} \ \le \  \ C t_i^{12p^2+13p-1-2N}\ \le  C t_{i+1}^{-(N+1)},
\end{equation}
where we used the fact that 
\[ 12p^2+13p-1-2N\leq -\frac{3}{2}(N+1),\]
which holds for $N=\alpha$, and so it holds also for $N\geq \alpha$.

To estimate $\phi_{X_i}^*(U_i)$, we use \eqref{eq: Uk estimate}, we write $U_i$ as the sum \eqref{eq: uk representation}, and estimate each term separately. For the first term, we apply \eqref{eq: uk1}, then the smoothing inequality in \eqref{eq: smoothing 2}, and then homotopy inequality \eqref{eq: homotopy family estimate flat}, and, denoting $u:=x_a+p$ and $v:=y_a+(p-1)(x_a+1)$, we obtain: 
\begin{align*}
    \fnorm{(\id-S_i)(h^1_{r_i}(Z_i))}_{x_a+1,y_a,r_i} 
    \le   C \big(&t_i^{-{12p}l}\, \fnorm{Z_i}_{u+l,v+(p-3)l,r_i}+t_i^{-j}\fnorm{Z_i}_{u,v+j,r_i}\big)\\
    \stackrel{\eqref{eq: ckc}}{\le}   C \big(& t_i^{-{12p}l+\alpha+2(p-1)(u+l-x_c)} + t_i^{-j+\alpha+2(p-1)(u-x_c)}\big)\\
       =   C \big(& t_i^{-(10p+2)l+e} + t_i^{-j+f}\big),
\end{align*}
for some constants $e,f$. Since $l,j\geq 0$ are arbitrary, we can take them large enough so that:
\begin{equation}\label{eq: u1 final}
    \fnorm{(\id-S_i)(h^1_{r_i}(Z_i))}_{x_a+1,y_a,r_i}\leq  Ct_{i+1}^{-(N+1)}.
\end{equation}  

To estimate the other term of $U_i$, we use \eqref{eq: uk2}, the interpolation inequality \eqref{eq: interpolation t}, equation \eqref{eq: ckc} and the induction assumption:
\begin{align*}
        \fnorm{\frac{1}{2}h^2_{r_i}([Z_i,Z_i])}_{x_a,y_a,r_i}&\,  \le  C \fnorm{Z_i}_{x_a,y_a,r_i} \fnorm{Z_i}_{p+x_a,y_a,r_i}  \\
        &\, \le Ct_i^{-N}\fnorm{Z_i}_{x_a,y_a,r_i}^{\frac{x-x_a-p}{x-x_a}}\fnorm{Z_i}_{x,y_a,r_i}^{\frac{p}{x-x_a}}\\
        &\, \le Ct_i^{-N(2-\frac{p}{x-x_a})+(\alpha+2(p-1)(x-x_c))\frac{p}{x-x_a}}
\end{align*}

We claim that, for large $x$ the exponent of the right-hand side is smaller than $-(N+1)\frac{3}{2}$. Note that, the limit at $x\to \infty$ of the exponent is:
\[-2N+2p(p-1).\]
So it suffices to note that:
\[4p(p-1)+3< N,\]
which holds because $N\geq\alpha$. Hence
\begin{align}
    \fnorm{\frac{1}{2}h^2_{r_i}([Z_i,Z_i])}_{x_a,y_a,r_i} \le \ Ct_{i+1}^{-(N+1)} \label{eq: uk2c final}
\end{align}
Finally, combining \eqref{eq: Uk estimate}, \eqref{eq: u1 final} and \eqref{eq: uk2c final} we obtain
\begin{align*}
    \fnorm{\phi^*_{X_i}(U_i)}_{x_a,y_a,r_{i+1}}\le &\ C\fnorm{U_i}_{x_a,y_a,r_i} \le Ct_{i+1}^{-(N+1)}
\end{align*}
which finishes the proof.
\end{proof}

\subsection{Convergence}\label{subsection:Convergence}

To show convergence of the sequence we constructed, we use the following: 
\begin{lemma}\label{lemma: sigman}
For every $n\in \N$ the sum
\[ \sum_{i\geq 0}\norm{X_i}_{n,r_i} <\infty\]
Moreover, for $n=1$, we have
\[ \sum_{i\geq 0}\norm{X_i}_{1,r_i}\le C\cdot t_0^{-1} \]
\end{lemma}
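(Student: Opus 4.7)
My plan is to exploit the flatness of each $X_i$, which gives the identification $\norm{X_i}_{n,r_i}=\fnorm{X_i}_{n,0,r_i}$, and then to produce super-polynomial decay in $i$ by combining the estimates on $Z_i$ already in hand. The new ingredient beyond the algorithm is Lemma \ref{lemma: bkc} (arbitrarily fast decay of $\fnorm{Z_i}$ at the single privileged index $(x_a,y_a)$), which I would interpolate against Lemma \ref{lemma: ckc} (polynomial growth at every other index).

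For the quantitative $n=1$ estimate I would invoke \eqref{eq: xo} directly, which already gives $\norm{X_i}_{1,r_i}=\fnorm{X_i}_{1,0,r_i}\le C\,t_i^{-\alpha}$. Since $t_i=t_0^{(3/2)^i}$ and $(3/2)^i\ge 1+i/2$ for integer $i\ge 0$,
\[
\sum_{i\ge 0}t_i^{-\alpha}\;\le\;t_0^{-\alpha}\sum_{i\ge 0}t_0^{-\alpha i/2}\;=\;\frac{t_0^{-\alpha}}{1-t_0^{-\alpha/2}}\;\le\;2\,t_0^{-\alpha},
\]
valid as soon as $t_0^{-\alpha/2}\le 1/2$. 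Using $\alpha\ge 1$ and $t_0>1$ one has $2t_0^{-\alpha}\le 2t_0^{-1}$, which is the claimed bound. The smallness condition on $t_0^{-\alpha/2}$ is one of the ``two more estimates'' on $t_0$ flagged in the remark after Lemma \ref{lemma: well-definedness}.

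For general $n\ge 1$, applying \eqref{eq: Xk estimate} with $l=j=0$ gives
\[
\norm{X_i}_{n,r_i}\;=\;\fnorm{X_i}_{n,0,r_i}\;\le\;C\,\fnorm{Z_i}_{n+p-1,\,(p-1)n,\,r_i}.
\]
The right-hand side is bounded by Lemma \ref{lemma: ckc} only polynomially in $t_i$; to turn this into decay I would interpolate between that polynomial bound and the super-polynomial decay at $(x_a,y_a)$ coming from Lemma \ref{lemma: bkc}. A two-step procedure, using \eqref{eq: standard interpolation} in the derivative order and \eqref{eq: interpolation s} in the weight (together with the trivial comparisons $\fnorm{W}_{n,k,r}\le R^{k'-k}\fnorm{W}_{n,k',r}$ for flat $W$ and $k\le k'$, valid on the bounded ball), would yield
\[
\fnorm{Z_i}_{n+p-1,(p-1)n,r_i}\;\le\;C\,\fnorm{Z_i}_{x_a,y_a,r_i}^{\lambda}\,\prod_{\ell}\fnorm{Z_i}_{x^{(\ell)},y^{(\ell)},r_i}^{\mu_\ell},
\]
with a strictly positive exponent $\lambda>0$ and finitely many auxiliary indices $(x^{(\ell)},y^{(\ell)})$ fixed independently of $i$. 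Applying Lemma \ref{lemma: bkc} to the first factor with an arbitrarily large $N$ and Lemma \ref{lemma: ckc} to the remaining ones, the exponent on $t_i$ becomes $-N\lambda+c(n)$ for some fixed $c(n)$. Choosing $N\ge (2+c(n))/\lambda$ gives $\norm{X_i}_{n,r_i}\le C_n\,t_i^{-2}$, and the series is summed as in the $n=1$ case.

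The main obstacle is the case distinction in the interpolation, because the position of $(n+p-1,(p-1)n)$ relative to $(x_a,y_a)=(p,p^2)$ depends on $n$. For small $n$ the weight $(p-1)n$ lies below $y_a$ and the weight interpolation must extend downward (using the trivial comparison just mentioned), whereas for large $n$ it lies above and one extends upward to some auxiliary $V>(p-1)n$. In both regimes one has to verify that the exponent $\lambda$ on the fast-decaying factor comes out strictly positive and that $c(n)$, though depending on $n$, is finite and fixed once the auxiliary indices are fixed, so that $N$ can be chosen accordingly. Keeping the bookkeeping uniform across the two regimes is the one delicate point; everything else is bookwork.
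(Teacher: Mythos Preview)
Your argument is correct and follows the same overall strategy as the paper: invoke \eqref{eq: xo} for $n=1$, bound $\norm{X_i}_{n,r_i}$ via \eqref{eq: Xk estimate}, and then interpolate between the fast decay of Lemma~\ref{lemma: bkc} and the polynomial growth of Lemma~\ref{lemma: ckc}.

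The implementation differs in one place. After \eqref{eq: Xk estimate} you stay with the weighted norm $\fnorm{Z_i}_{n+p-1,(p-1)n,r_i}$ and perform a two-step interpolation (first in the derivative index, then in the weight), which forces the case distinction you flag on whether $(p-1)n$ lies above or below $y_a$. The paper instead applies Corollary~\ref{corollary: degree shift} at this point to trade the weight entirely for derivatives,
\[
\fnorm{Z_i}_{n+p-1,(p-1)n,r_i}\le C\,\norm{Z_i}_{pn+p-1,r_i},
\]
and then reduces the problem to showing $\sum_i\norm{Z_i}_{m,r_i}<\infty$ for all $m$. This needs only a single interpolation in the derivative index between $x_a$ and $x_a+m$ (using $\norm{Z_i}_{x_a,r_i}\le C\fnorm{Z_i}_{x_a,y_a,r_i}$ on the bounded ball), with no case analysis. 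Your route works, but the paper's detour through Corollary~\ref{corollary: degree shift} eliminates precisely the bookkeeping you identify as the delicate point.
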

\begin{proof}
The statement for $n=1$ follows directly from \eqref{eq: xo}. Note that by \eqref{eq: Xk estimate} and Corollary \ref{corollary: degree shift}, we have that:
\[  \norm{X_i }_{n,r_i}\ \le \ C \fnorm{Z_i}_{n+p-1,(p-1)n,r_i}\ \le \ C\norm{Z_i}_{pn+p-1,r_i}.
\]
So it suffices to show that $\sum_{i}\norm{Z_i}_{n,r_i}<\infty$, for all $n$. 
\begin{align*}
    \norm{Z_i}_{n,r_i}   &\ \ \ \stackrel{\eqref{eq: interpolation t}}{\le} \ \ \ C \norm{Z_i}_{x_a,r_i}^{\frac{x_a}{n}} \norm{Z_i}_{x_a+n,r_i}^{\frac{n-x_a}{n}}\\
    &\stackrel{\eqref{eq: ckc},\eqref{eq: bkc}}{\le} C  t_i^{-N\frac{x_a}{n}+\frac{n-x_a}{n}(\alpha +2(p-1)(x_a+n-x_c))},
\end{align*}
and so by choosing $N$ large enough, we obtain the conclusion.
\end{proof}

Next, we recall some facts and estimates for diffeomorphisms (for proofs see \cite[Section 3.2]{Marcut}).

\begin{lemma}\label{lemma: diffeo}
There exists $\theta>0$ such that, if $\Phi \in C^{\infty}(\overline{B}_r,\R^m)$ satisfies $\norm{\Phi-\id}_{1,r} <\theta$, then $\Phi $ is a diffeomorphism onto its image.
\end{lemma}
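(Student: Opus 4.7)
The plan is to choose any $\theta<1$ and deduce both global injectivity of $\Phi$ and pointwise invertibility of its differential from the $C^1$-smallness of the deviation $g:=\Phi-\id$. The underlying principle is the classical fact that a sufficiently small $C^1$-perturbation of the identity on a convex domain is a $C^\infty$-diffeomorphism onto its image.

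For injectivity, I would use a mean value argument. Since $\overline{B}_r$ is convex and $\norm{dg}_{0,r}\le \norm{g}_{1,r}<\theta$, the mean value inequality applied along the segment from $x$ to $y$ in $\overline{B}_r$ yields $|g(x)-g(y)|\le \theta|x-y|$, and hence
\begin{equation*}
    |\Phi(x)-\Phi(y)|\ \ge\ |x-y|-|g(x)-g(y)|\ \ge\ (1-\theta)|x-y|.
\end{equation*}
This is strictly positive whenever $x\ne y$, so $\Phi$ is injective on $\overline{B}_r$ and in fact bi-Lipschitz onto its image. For local invertibility, I observe that $d\Phi_x=\id+dg_x$ with $\norm{dg_x}_{\mathrm{op}}\le \norm{dg}_{0,r}<\theta<1$, so the Neumann series shows $d\Phi_x$ is invertible at every $x\in \overline{B}_r$. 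The standard inverse function theorem then produces a smooth local inverse near each interior point of $B_r$; combined with the global injectivity above, this forces $\Phi(B_r)\subset \R^m$ to be open and $\Phi:B_r\to\Phi(B_r)$ to be a $C^\infty$-diffeomorphism.

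The argument is entirely classical and presents no substantive obstacle. The only mild point concerns the meaning of ``diffeomorphism onto its image'' up to the boundary $\partial B_r$: one interprets it in the natural sense that $\Phi$ is an injective smooth map with invertible differential everywhere on $\overline{B}_r$, whose continuous inverse on the compact image $\Phi(\overline{B}_r)$ restricts to the smooth inverse on $\Phi(B_r)$. For the use of the lemma in the iteration of the algorithm, where one only needs to compose flows defined on nested open balls, this level of regularity is more than sufficient; the explicit threshold $\theta$ can be taken to be any convenient value in $(0,1)$, e.g.\ $\theta=\tfrac{1}{2}$, so that the Lipschitz constant $1-\theta$ is bounded away from zero.
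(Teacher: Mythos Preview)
Your argument is correct and entirely standard: convexity of $\overline{B}_r$ plus the mean value inequality gives the bi-Lipschitz bound, and the Neumann series handles invertibility of the differential. Note that the paper does not actually prove this lemma; it simply cites \cite[Section 3.2]{Marcut}, where the same classical argument appears, so your approach matches what the reference provides.
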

\begin{lemma}\label{lemma: converging diffeo}
There exists $\theta>0$ such that for all sequences of maps
\[ \{\phi_i\in C^{\infty}(\overline{B}_{r_{i+1}},B_{r_{i}})\}_{i\geq 0}\]
with $0<r\le r_{i+1} \le r_{i}<R$, which satisfy the estimates
\[ \sigma_0 := \sum_{ i\geq 0}\norm{\phi_i-\id}_{0,r_{i+1}}<\theta, \ \ \ \ \text{ and } \ \ \ \ \sigma_n:=\sum_{ i\geq 0}\norm{\phi_i -\id}_{n,r_{i+1}}<\infty,\]
the sequence of maps 
\[ \Phi_i:=\phi_0\circ \phi_1 \circ \dots \circ \phi_i |_{\overline{B}_r} :\overline{B}_r\to B_R,\]
converges in all $C^n$-norms to a smooth map $\Phi\in C^{\infty}(\overline{B}_r,B_R)$ which satisfies
\[ \norm{\Phi-\id}_{n,r}\le C \sigma_{n}e^{C\sigma_n},\]
for some constant $0<C=C_n$.
\end{lemma}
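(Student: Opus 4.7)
\textbf{Proof strategy for Lemma \ref{lemma: converging diffeo}.} The plan is to control the telescoping sum $\Phi_j-\Phi_{j-1}$ at every $C^n$-level, and then to pass to the limit in all norms. The essential input will be the standard tame estimate for the $C^n$-norm of a composition. Namely, for $F\in C^\infty(\overline{B}_{r'},\R^m)$ and $G\in C^\infty(\overline{B}_{r''},\overline{B}_{r'})$, the Fa\`a di Bruno formula combined with the usual interpolation inequalities from Part I yields
\[
\norm{F\circ G}_{n,r''}\ \le\ C_n\bigl(\norm{F}_{n,r'}(1+\norm{G}_{1,r''})^n+\norm{F}_{1,r'}\norm{G}_{n,r''}\bigr),
\]
valid whenever $\norm{G}_{1,r''}$ is bounded. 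This is the only composition estimate we need, and it is routine once the interpolation inequalities of Section~3 are available.

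\textbf{Step 1: uniform $C^1$ control.} Writing $\Phi_j=\Phi_{j-1}\circ\phi_j$, the chain rule gives $\norm{\Phi_j}_{1,r}\le \norm{\Phi_{j-1}}_{1,r_j}(1+\norm{\phi_j-\id}_{1,r_{j+1}})$, which iterates to
\[
\norm{\Phi_j}_{1,r}\ \le\ \prod_{i=0}^{j}(1+\norm{\phi_i-\id}_{1,r_{i+1}})\ \le\ e^{\sigma_1}.
\]
For $\theta>0$ small enough, Lemma \ref{lemma: diffeo} applied at each step keeps the composition a diffeomorphism onto its image, so the recursion is well-defined.

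\textbf{Step 2: $C^0$ convergence.} The telescoping identity $\Phi_j-\Phi_{j-1}=\Phi_{j-1}\circ\phi_j-\Phi_{j-1}\circ\id$ combined with the mean value theorem gives
\[
\norm{\Phi_j-\Phi_{j-1}}_{0,r}\ \le\ \norm{\Phi_{j-1}}_{1,r_j}\norm{\phi_j-\id}_{0,r_{j+1}}\ \le\ e^{\sigma_1}\norm{\phi_j-\id}_{0,r_{j+1}}.
\]
Summing in $j$ shows $\{\Phi_j\}$ is $C^0$-Cauchy, with limit $\Phi$ satisfying $\norm{\Phi-\id}_{0,r}\le e^{\sigma_1}\sigma_0\le C\sigma_0 e^{C\sigma_0}$.

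\textbf{Step 3: higher norms.} Iterating the tame composition inequality of the opening paragraph on $\Phi_j=\Phi_{j-1}\circ\phi_j$, one proves by induction on $j$ that
\[
\norm{\Phi_j-\id}_{n,r}\ \le\ C_n\Bigl(\sum_{i=0}^{j}\norm{\phi_i-\id}_{n,r_{i+1}}\Bigr)\exp\!\Bigl(C_n\sum_{i=0}^{j}\norm{\phi_i-\id}_{n,r_{i+1}}\Bigr),
\]
so that $\norm{\Phi_j-\id}_{n,r}\le C\sigma_n e^{C\sigma_n}$ uniformly in $j$. The same composition estimate applied to $\Phi_j-\Phi_{j-1}=\int_0^1 D\Phi_{j-1}|_{\id+t(\phi_j-\id)}(\phi_j-\id)\,\mathrm{d}t$ bounds $\norm{\Phi_j-\Phi_{j-1}}_{n,r}$ by a combination of $\norm{\phi_j-\id}_{n,r_{j+1}}$ and $\norm{\phi_j-\id}_{0,r_{j+1}}$ (with coefficients bounded in $j$). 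Since $\sum_i\norm{\phi_i-\id}_{n,r_{i+1}}$ and $\sum_i\norm{\phi_i-\id}_{0,r_{i+1}}$ both converge, the sequence $\{\Phi_j\}$ is Cauchy in $C^n(\overline{B}_r)$ for every $n$. Passing to the limit in the above bound gives $\norm{\Phi-\id}_{n,r}\le C\sigma_n e^{C\sigma_n}$.

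\textbf{Main obstacle.} The only delicate point is keeping the domain-shrinking bookkeeping consistent: each $\Phi_{j-1}$ lives on $\overline{B}_{r_j}$ and must be composed with $\phi_j$ defined on $\overline{B}_{r_{j+1}}$ with values in $B_{r_j}$. This is exactly guaranteed by the hypothesis $\phi_i\in C^\infty(\overline{B}_{r_{i+1}},B_{r_i})$, together with the smallness of $\sigma_0$ (via Lemma \ref{lemma: diffeo}) which ensures the iterated image stays inside the next ball. Once this is set up, the argument is a purely analytic manipulation of the composition inequality, and the exponential factor $e^{C\sigma_n}$ appears naturally by iterating the multiplicative term $(1+\norm{\phi_i-\id}_{n})$.
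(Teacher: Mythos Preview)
The paper does not prove this lemma: it is quoted verbatim from \cite[Section~3.2]{Marcut}, so there is no proof in the paper to compare against. Your sketch is the standard argument and is essentially correct; the tame composition estimate plus telescoping and induction is exactly how this kind of statement is proved in the cited reference.

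Two small points worth cleaning up. First, you do not need Lemma~\ref{lemma: diffeo} for the recursion to be well-defined, nor for the image to stay in the right ball: the hypothesis already gives $\phi_i\in C^{\infty}(\overline{B}_{r_{i+1}},B_{r_i})$, so $\Phi_{j-1}$ is defined on $\overline{B}_{r_j}$ and $\phi_j$ maps $\overline{B}_{r_{j+1}}$ into $B_{r_j}$; the composition is automatic. Lemma~\ref{lemma: diffeo} is only used after the limit is taken, to conclude that $\Phi$ is a diffeomorphism onto its image. Second, your Step~2 yields the bound $e^{\sigma_1}\sigma_0$, which is not of the form $C_0\sigma_0 e^{C_0\sigma_0}$ for a universal $C_0$. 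The direct telescoping identity $\Phi_j-\id=(\Phi_{j-1}-\id)\circ\phi_j+(\phi_j-\id)$ gives $\norm{\Phi_j-\id}_{0,r}\le\sum_{i\le j}\norm{\phi_i-\id}_{0,r_{i+1}}\le\sigma_0$ immediately, without any $C^1$ input.
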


Lemma \ref{lemma: flow} and 
Lemma \ref{lemma: sigman}
imply that our vector fields satisfy:
\begin{align*}
\sum_{i\geq 0}\norm{\phi_{X_i}-\mathrm{Id}}_{1,r_{i+1}}&\leq\, C
\sum_{i\geq 0}\norm{X_i}_{1,r_{i+1}}\leq C\cdot t^{-1}_0\\
\sum_{i\geq 0}\norm{\phi_{X_i}-\mathrm{Id}}_{n,r_{i+1}}&\leq\, C
\sum_{i\geq 0}\norm{X_i}_{n,r_{i+1}}< \infty.
\end{align*}
Note that the constant in the first inequality is independent of $\pi$, as it comes from  \eqref{eq: xo} and Lemma \ref{lemma: flow}. Therefore, we can make $t_0$ large enough, so that also the first condition of Lemma \ref{lemma: converging diffeo} holds: $\sigma_0<\theta$. Hence \[ \Phi_i:=\phi_{X_0} \circ \dots \circ \phi_{X_i}|_{\overline{B}_r}:\overline{B}_r\to B_R\]
converges uniformly in all $C^n$-norms to a map $\Phi:\overline{B}_r \to B_R$ which satisfies 
\[ \norm{\Phi-\id}_{1,r}\le Ct_0^{-1}.\]
Again, this constant is independent of $\pi$, so by possibly increasing $t_0$, Lemma \ref{lemma: diffeo} implies that $\Phi$ is a diffeomorphism onto its image.

The convergence of $\Phi_i$ to $\Phi$ in the $C^1$-topology implies that $\Phi^*_i(\pi)$ convergence in the $C^0$-topology to $\Phi^*(\pi)$. As
\[Z_i|_{\overline{B}_r}=\Phi^*_i(\pi)-\pi_{\g}|_{\overline{B}_r}\xrightarrow{i\to \infty}0\] in the $C^0$-topology we conclude that $\Phi^*(\pi)=\pi_{\g}|_{\overline{B}_r}$. Hence $\Phi$ is a Poisson map and a diffeomorphism onto its image: 
\[ \Phi:(\overline{B}_r,\pi_{\g})\to (B_R,\pi)\]
This concludes the proof of Theorem \ref{theorem: rigidity}.

\appendix
\setcounter{section}{2}
\section{Smoothing operators for flat functions}\label{section: functional analysis}

In this appendix we collect general properties on the space of flat functions on a closed ball $\overline{B}_r$, for which we develop the necessary tools to apply the Nash-Moser method in the flat framework. Most importantly, we obtain smoothing operators for this space with respect to the norms $\fnorm{\cdot}_{n,k,r}$. These operators depend on two parameters $t,s$, corresponding to the two indexes $n,k$ of the norms. For their construction, we observe that flatness at the corresponds under inversion in the sphere to Schwartz-like behavior at infinity. This allows us to use the original construction due to Nash \cite{Nash} of smoothing operators on the space of Schwartz functions. Finally, we prove interpolation inequalities for the $\fnorm{\cdot}_{n,k,r}$ norms.

\subsection{The space of flat functions}\label{subsec: equivalent seminorms}
Let $B_r\subset \R^m$ be the open ball of radius $0<r$ centered at the origin. On the space of smooth functions $C^{\infty}(\overline{B}_r)$ we have the usual $C^n$-norms:
\[     \norm{f}_{n,r}:= \sup_{x\in \overline{B}_r}\ \sup_{a\in \N_0^m:\ |a|\le n}|D^af(x)|.\]
Denote the space of functions on $\overline{B}_r$ that are flat at the origin by:
\[C^{\infty}_0(\overline{B}_r):=\{f\in C^{\infty}(\overline{B}_r)\, |\,  \forall\, a \in \N_0^m : D^{a}f(0)=0\}.\]
Flat functions have the following behavior at the origin:
\[\forall\, a\in \N^m_0,\, k\in \N_0\ :\ \lim_{|x| \to 0}|x|^{-k}|D^{a}f(x)|=0.\]
Therefore we may define for flat functions the norms:
\begin{align}\label{defintion:norms:appendix:flat}
     \fnorm{f}_{n,k,r}:= \sup_{x\in \overline{B}_r}\ \sup_{a\in \N_0^m:\ |a|\le n}|x|^{-k}|D^af(x)|
\end{align}
for $k,n\in \N_0$ and $f\in C^{\infty}_0(\overline{B}_r)$. We obtain the Fr\'echet space: 
\[\big(C^{\infty}_0(\overline{B}_r),\{ \fnorm{\cdot}_{n,k,r}\}_{k,n\in \N_0}\big).\]
This space carries also the norms introduced in \eqref{eq: flat norms 2}: 
\begin{align*}
    |f|_{n,k,r}:=\norm{\frac{1}{|x|^{k}}f}_{n,\overline{B}_r}=\sup_{x\in \overline{B}_r}\sup_{a\in \N_0^m : |a|\le n}|D^a|x|^{-k}f(x)| 
\end{align*}
The following result is very useful:
\begin{lemma}\label{lemma: equivalence}
The norms $|\cdot |_{n,k,r}$ and $\fnorm{\cdot}_{n,k,r}$ are equivalent:
\[ C^{-1}|f |_{n,k,r} \le \fnorm{f}_{n,k,r}\le C|f |_{n,k,r}\]
for a constant $0<C=C(k,n)$.
\end{lemma}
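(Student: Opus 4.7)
My plan is to establish both inequalities by a Leibniz expansion followed by a Taylor remainder estimate that absorbs the resulting negative powers of $|x|$ using the flatness of $f$. Two preliminary facts will be used throughout. First, for $x\ne 0$ and any multi-index $c$, induction from $\partial_i|x|^s=sx_i|x|^{s-2}$ gives the pointwise bound
\[|D^c(|x|^{\pm k})|\le C_{k,c}\,|x|^{\pm k-|c|}.\]
Second, if $h\in C^\infty_0(\overline{B}_r)$ then the integral form of Taylor's formula at $0$, combined with flatness, yields for every $N\in\N_0$
\[|h(x)|\le C_N\,|x|^N\max_{|\alpha|=N}\sup_{|y|\le|x|}|D^\alpha h(y)|.\]

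For the direction $|f|_{n,k,r}\le C\fnorm{f}_{n,k,r}$ I would fix $|a|\le n$ and expand
\[D^a\bigl(|x|^{-k}f\bigr)=\sum_{c\le a}\binom{a}{c}D^{a-c}(|x|^{-k})\,D^c f.\]
I bound $|D^{a-c}(|x|^{-k})|$ by $C\,|x|^{-k-|a|+|c|}$ using the first preliminary, and apply the Taylor remainder bound to the flat function $D^c f$ with $N=|a|-|c|$ to get $|D^c f(x)|\le C\,|x|^{|a|-|c|+k}\,\fnorm{f}_{n,k,r}$. The powers of $|x|$ cancel term by term, leaving $|D^a(|x|^{-k}f)(x)|\le C\,\fnorm{f}_{n,k,r}$.

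For the reverse inequality $\fnorm{f}_{n,k,r}\le C|f|_{n,k,r}$ I would first verify that $g:=|x|^{-k}f$ extends to an element of $C^\infty_0(\overline{B}_r)$, even when $k$ is odd. On $B_r\setminus\{0\}$, Leibniz together with the preliminary bound gives $|D^a g(x)|\le C\sum_{b\le a}|x|^{-k-|a|+|b|}|D^b f(x)|$, and the flatness of $f$ (used via the Taylor remainder for a sufficiently large $N$) forces $|D^a g(x)|=O(|x|^M)$ for every $M$; a standard continuity argument then shows $g$ extends smoothly by $0$ and is itself flat at the origin. Once this is in hand, writing $f=|x|^k g$ and expanding by Leibniz yields
\[|x|^{-k}|D^a f(x)|\le \sum_{e\le a} C\,|x|^{|e|-|a|}\,|D^e g(x)|,\]
and the Taylor remainder bound applied to the flat function $D^e g$ with $N=|a|-|e|$ produces $|D^e g(x)|\le C\,|x|^{|a|-|e|}\,|f|_{n,k,r}$. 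Summing and taking suprema completes the argument.

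The step I expect to be the main obstacle is precisely the smooth flat extension of $g=|x|^{-k}f$ when $k$ is odd: the function $|x|^{-k}$ is then not smooth at $0$, so the regularity of $g$ must be extracted entirely from the flatness of $f$ cancelling the singularity. Once that technical point is secured, the two directions become symmetric instances of the same Leibniz-plus-Taylor scheme.
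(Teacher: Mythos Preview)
Your proof is correct and uses the same core ingredients as the paper: the Leibniz expansion, the Taylor integral-remainder estimate for flat functions, and the elementary bounds $|D^c(|x|^{\pm k})|\le C|x|^{\pm k-|c|}$.

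The only organizational difference is in the reverse inequality $\fnorm{f}_{n,k,r}\le C|f|_{n,k,r}$. The paper rearranges the same Leibniz identity to isolate $|x|^{-k}D^af$, proves the auxiliary inequality $|f|_{n,k+l,r}\le C|f|_{n+l,k,r}$, and then argues by induction on $n$. You instead first establish that $g=|x|^{-k}f$ extends to a flat smooth function and then run the exact mirror of your first-direction argument with the roles of $f$ and $g$ swapped. This is a slightly cleaner, more symmetric presentation; the paper in fact needs (and tacitly uses) the flatness of $|x|^{-k}f$ when it applies the Taylor remainder to $D^{a^2}(|x|^{-k}f)$ in the proof of its auxiliary inequality, so your explicit verification of that point makes the argument more self-contained. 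Substantively the two proofs are the same.
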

\begin{proof}
Using Taylor's formula with integral remainder of order $d$ for $f\in C^{\infty}_0(\overline{B}_r)$ and that $f$ is flat, we obtain:
\begin{align}\label{eq: taylor}
    |f(x)|\le &\ C \sum_{b\in \N_0^m:\, |b|=d}|x|^{d}\int_{0}^1(1-s)^{d-1}|D^{b} f(sx)|\dif s.
\end{align}
Using this we obtain, for all $k,l,n\in \N_0$, the inequality
\begin{align}\label{ineq:stupid}
    \fnorm{f}_{n,k+l,r}\le C\fnorm{f}_{n+l,k,r}.
\end{align}
Using the Leibniz rule, we may write for $a\in \N_0^m$ with $|a|=n$
\begin{align}\label{ineq:intelligent}
    D^{a}(|x|^{-k}f(x))=&\ |x|^{-k}D^{a}(f)(x)+\sum_{|b| <n }P_b(x)|x|^{-k-2(n-|b|)}D^b(f)(x) 
\end{align}
for homogeneous polynomials $P_b$ of degree $n-|b|$. This and \eqref{ineq:stupid} yield:
\[  | D^{a}(|x|^{-k}f(x))|\le C \sum_{0\le i\le n}\fnorm{f}_{i,k+n-i,r}\le C \fnorm{f}_{n,k,r},\]
for any $a\in \N_0^m$ with $|a|=n$. This proves the first part of the statement. 

For the other inequality, we first note that, by the Leibniz rule, we have:
\begin{align*}
    D^{a}(|x|^{-k-l}f(x))= &\ \sum_{a^1+a^2=a}D^{a^1}(|x|^{-l})D^{a^2}(|x|^{-k}f(x)).
    \end{align*}
The first term in the product can be estimated by:
\[ |D^{a^1}(|x|^{-l})|\le C |x|^{-l-|a^1|}.\]
By applying \eqref{eq: taylor} the term $D^{a^2}(|x|^{-k}f(x))$ can be bounded by:
\begin{align*}
    C \sum_{ |b|=l+|a^1|}|x|^{l+|a^1|}
     \int_{0}^1(1-s)^{l+|a^1|-1}|D^{a^2+b}(|\cdot|^{-k} f)(sx)|\dif s.
\end{align*}
Combining these estimates yields
\begin{align}\label{eq: strange taylor}
    |f|_{n,k+l,r}\le &\ C |f|_{n+l,k,r}
\end{align}

We prove the second inequality from the statement by induction on $n\in \N_0$. For $n=0$ and any $k\in \N_0$, we clearly have that
\[ \fnorm{f}_{0,k,r}=|f|_{0,k,r}.\]
Let us assume that the second inequality holds for all $k\in \N_0$ and all $n'<n$. 
Using \eqref{ineq:intelligent} and \eqref{eq: strange taylor}, we obtain that it holds also for $n$:
\begin{align*}
    \fnorm{f}_{n,k,r}&\leq |f|_{n,k,r} +C \sum_{0\le i<n}\fnorm{f}_{i,k+n-i,r}\\
    & \leq  |f|_{n,k,r} +C\sum_{0\le i<n}|f|_{i,k+n-i,r}\le\, C |f|_{n,k,r}.\qedhere
\end{align*}
\end{proof}
The lemma together with \eqref{eq: strange taylor} imply:
\begin{corollary}\label{corollary: degree shift}
For all $k,l,n\in \N_0$ and $f\in C^{\infty}_0(\overline{B}_r)$ we have
    \begin{align*}
    \fnorm{f}_{n,k+l,r}\le &\ C \fnorm{f}_{n+l,k,r}
\end{align*}
for a constant $0<C=C(k,l,n)$.
\end{corollary}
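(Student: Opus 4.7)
The plan is to observe that this corollary is an immediate consequence of material already assembled in the proof of Lemma \ref{lemma: equivalence}. Two routes are available, and I would present the most direct one.

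The first route: the inequality labeled \eqref{ineq:stupid} inside the proof of Lemma \ref{lemma: equivalence} asserts exactly $\fnorm{f}_{n,k+l,r}\le C\fnorm{f}_{n+l,k,r}$, which is the statement of the corollary. Its derivation uses only Taylor's formula with integral remainder \eqref{eq: taylor} applied to $D^a f$, which is itself flat at $0$ because $f$ is. Concretely, for $|a|\le n$,
\[ D^a f(x)=\sum_{|b|=l}\frac{x^b}{b!}\, l\int_0^1 (1-s)^{l-1} D^{a+b}f(sx)\,ds, \]
so $|D^a f(x)|\le C|x|^l\sup_{|b|=l,\,s\in[0,1]}|D^{a+b}f(sx)|$. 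Dividing by $|x|^{k+l}$ and bounding $|D^{a+b}f(sx)|\le (s|x|)^k\fnorm{f}_{n+l,k,r}$ yields
\[ |x|^{-(k+l)}|D^a f(x)|\le C\,\fnorm{f}_{n+l,k,r}\int_0^1 s^k\,ds\le C\,\fnorm{f}_{n+l,k,r}, \]
and taking the supremum over $x\in \overline{B}_r$ and $|a|\le n$ gives the corollary.

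A second (equivalent) route goes through the equivalence of norms, as suggested by the sentence preceding the statement: by Lemma \ref{lemma: equivalence}, $\fnorm{f}_{n,k+l,r}\le C|f|_{n,k+l,r}$; by \eqref{eq: strange taylor}, $|f|_{n,k+l,r}\le C|f|_{n+l,k,r}$; and once more by Lemma \ref{lemma: equivalence}, $|f|_{n+l,k,r}\le C\fnorm{f}_{n+l,k,r}$. Composing these three inequalities yields the result. There is no substantive obstacle: all the analytic work sits in the Taylor remainder argument carried out for Lemma \ref{lemma: equivalence}. The conceptual content is simply that, for flat functions, each unit of vanishing at the origin built into the weight $|x|^{-1}$ can be exchanged, up to a constant depending on $k,l,n$, for one additional $C^n$-derivative.
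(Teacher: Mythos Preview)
Your proposal is correct and matches the paper's own argument. The paper's one-line justification (``The lemma together with \eqref{eq: strange taylor} imply'') is exactly your second route, while your first route simply observes that \eqref{ineq:stupid}, already established inside the proof of Lemma~\ref{lemma: equivalence}, is literally the desired inequality; both are valid and rest on the same Taylor-remainder computation.
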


\subsection{Schwartz functions vs.\ flat functions}\label{section: Schwartz flat iso}
Recall that the space of Schwartz functions is defined as 
\[\mathcal{S}(\R^m):=\{f\in C^{\infty}(\R^m;\C)\, |\, \forall\,  k,n\in \N_0:\norm{f}_{n,k}<\infty\},\]
where we denote
\begin{align}\label{eq: Schwartz norm}
    \norm{f}_{n,k}:=\sup_{ x\in \R^m}\sup_{\substack{a\in \N_0^m:\ |a|\le n\\ l\in \N_0:\ l\le k}}
    \lvert x\rvert^{l}\lvert D^{a}f(x)\rvert \ \in \ [0,\infty].
\end{align}
With these norms, the space of Schwartz functions becomes a Fr\'echet space 
\[\big(\mathcal{S}(\R^m),\{\norm{\cdot}_{n,k}\}_{k,n\in \N_0}\big).\] 
The defining property of Schwartz functions is their behavior at infinity:
\begin{equation}\label{eq: behavior at infinity}
    \forall\, a\in \N^m_0,\, k\in \N_0 :\ 
\lim_{|x| \to \infty}|x|^{k}|D^{a}f(x)|=0.
\end{equation}
We refer to functions with this property as Schwartz-like at infinity, although they might not be defined on all of $\R^m$. 

We define norms on $C^{\infty}_0(\R^m;\C)$ similarly to \eqref{defintion:norms:appendix:flat}:
\[ \fnorm{f}_{n,k}:= \sup_{x\in \R^m\setminus \{0\}}\ \sup_{\substack{a\in \N_0^m:\ |a|\le n\\ l\in \N_0:\ l\le k}}|x|^{-l}|D^af(x)| \ \in \ [0,\infty]\]
for $k,n\in \N_0$. We denote the space of functions that are Schwartz-like at infinity and flat at the origin by $\mathcal{S}_0(\R^m)=\mathcal{S}(\R^m)\cap C^{\infty}_0(\R^m;\C)$. Then
\[(\mathcal{S}_0(\R^m),\{\norm{\cdot}_{n,k},\fnorm{\cdot}_{n,k}\}_{k,n\in \N_0})\] 
is a Fr\'echet space.

In order to compare the property of being Schwartz-like at infinity with that of being flat at the origin, we use the inversion $\rho$ on $\R^m \setminus\{0\}$ in the sphere of radius one, i.e., the map
\begin{equation}\label{eq: inversion}
    \begin{array}{cccc}
         \rho:& \R^m \setminus\{0\}&\to&\R^m \setminus\{0\} \\
         & x&\mapsto& \frac{x}{|x|^2}
    \end{array}
\end{equation}
Note that $\rho^2 =\id$. This map satisfies the following: 

\begin{proposition}\label{proposition: inversion}
The map $\rho$ induces a continuous linear involution
\[ \rho^* : \ \mathcal{S}_0(\R^m)  \to  \mathcal{S}_0(\R^m) \]
which satisfies the following estimates
\[ \fnorm{\rho^*(f)}_{n,k} \le C \norm{f}_{n,k+2n} \qquad \text{ and }\qquad \norm{\rho^*(f)}_{n,k+2n} \le C \fnorm{f}_{n,k}\]
where $0<C=C(k,n)$ for $k,n\in \N_0$.
\end{proposition}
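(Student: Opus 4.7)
The plan is to prove the two estimates by a direct expansion of $D^a(f\circ\rho)$ via the multivariate Fa\`a di Bruno formula, combined with a Taylor expansion that exploits the flatness of $f$ at the origin. Since each component $\rho_j(x)=x_j/|x|^2$ is rational and homogeneous of degree $-1$, every partial derivative satisfies $|D^b\rho_j(x)|\leq C_b|x|^{-1-|b|}$ for all multi-indices $b$. The chain rule then yields, for $|a|=n\geq 1$, an expansion
\[D^a\rho^*(f)(x)=\sum_{q=1}^n\sum_{\substack{|\alpha|=q,\ \sum|b_i|=n\\ |b_i|\geq 1}}c_{a;\alpha,b}\,(D^\alpha f)(\rho(x))\prod_{i=1}^qD^{b_i}\rho(x),\]
each of whose summands is bounded in absolute value by $C|x|^{-n-q}\,|(D^\alpha f)(\rho(x))|$; the case $|a|=0$ is the trivial identity $\rho^*(f)(x)=f(\rho(x))$. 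The involution property $(\rho^*)^2=\id$ follows immediately from $\rho^2=\id$, so only the two seminorm estimates remain, from which continuity and the fact that $\rho^*$ preserves $\mathcal{S}_0(\R^m)$ are consequences.

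For the first inequality, I would substitute $y=\rho(x)$, using $|y|=1/|x|$, to rewrite
\[|x|^{-l}|D^a\rho^*(f)(x)|\leq C\sum_{q=1}^n|y|^{l+n+q}|D^\alpha f(y)|.\]
Since $|\alpha|=q\leq n$ and $l+n+q\leq k+2n$ whenever $l\leq k$, each factor is dominated by the Schwartz norm $\norm{f}_{n,k+2n}$. Taking the supremum over $x$ and over admissible $l,|a|$ yields $\fnorm{\rho^*(f)}_{n,k}\leq C\norm{f}_{n,k+2n}$.

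For the second inequality, the key step is to convert the flatness of the $n$-th derivatives of $f$ (of order $k$, as encoded in $\fnorm{f}_{n,k}$) into enhanced flatness of every lower-order derivative, via iterated Taylor expansion. Since all partial derivatives of $f$ vanish at the origin, the remainder formula applied to $D^\alpha f$ gives
\[|D^\alpha f(y)|\leq C|y|^{n-|\alpha|}\sup_{|z|\leq|y|,\,|b|=n-|\alpha|}|D^{\alpha+b}f(z)|\leq C\fnorm{f}_{n,k}|y|^{n-|\alpha|+k}\]
for small $|y|$. Combined with the Fa\`a di Bruno bound and $y=\rho(x)$ for large $|x|$, this produces
\[|x|^l|D^a\rho^*(f)(x)|\leq C\fnorm{f}_{n,k}\sum_{q=1}^n|x|^{l-n-q-(n-q+k)}=C\fnorm{f}_{n,k}|x|^{l-2n-k},\]
which is uniformly bounded for $l\leq k+2n$. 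The complementary region $|x|\leq 1$ is handled directly by the boundedness of $D^\alpha f$ on $\R^m$ already captured by $\fnorm{f}_{n,k}$ at $l=0$. The main obstacle is the exact matching in this final computation: the $q$-dependence cancels precisely between the homogeneity $|x|^{-n-q}$ coming from the derivatives of $\rho$ and the enhanced flatness exponent $n-q+k$ coming from Taylor, and it is this cancellation that produces the factor $2n$ relating the flatness order $k$ on the right to the Schwartz decay order $k+2n$ on the left.
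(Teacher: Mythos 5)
Your proof follows essentially the same route as the paper's: bound all partials of $\rho$ by $|D^b\rho(x)|\leq C|x|^{-1-|b|}$, expand $D^a\rho^*(f)$ via the multivariate chain rule to get summands of size $|x|^{-n-|\alpha|}|D^\alpha f(\rho(x))|$, substitute $y=\rho(x)$ for the first estimate, and use Taylor expansion together with the flatness of $f$ at the origin for the second estimate. The paper uses the integral form of the Taylor remainder where you use the Lagrange form; these are interchangeable here.

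One remark on the end of your argument for the second inequality. You state the Taylor bound $|D^\alpha f(y)|\leq C\fnorm{f}_{n,k}|y|^{n-|\alpha|+k}$ ``for small $|y|$'', then apply the Fa\`a di Bruno estimate ``for large $|x|$'', and propose to handle the complementary region $|x|\leq 1$ by the bound $|D^\alpha f(y)|\leq\fnorm{f}_{n,k}$ (the $l=0$ part of the flat norm). That last step does not close the gap: for $|x|\leq 1$ the Fa\`a di Bruno factors $|x|^{-n-q}$ blow up as $|x|\to 0$, and the bare supremum bound on $D^\alpha f$ does nothing to compensate, since it carries no decay in $|y|$. In fact the bifurcation is unnecessary. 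Since $f$ is flat at the origin, the Taylor remainder estimate $|D^\alpha f(y)|\leq C\fnorm{f}_{n,k}\,|y|^{n-|\alpha|+k}$ is valid for \emph{all} $y\neq 0$ (the derivatives $D^{\alpha+b}f$, with $|\alpha+b|=n$, are bounded everywhere by $\fnorm{f}_{n,k}|z|^k$). Using it globally gives, with $|a|=n$,
\[
|x|^{2n+k}\,|D^a\rho^*(f)(x)|\;\leq\; C\,\fnorm{f}_{n,k}
\]
uniformly in $x$, which is exactly the display the paper derives, with the crucial cancellation in the exponents that you correctly identified. So the underlying computation is right; just drop the restriction to large $|x|$ and the stopgap argument for $|x|\leq 1$, and your proof coincides with the paper's.
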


\begin{proof}
In the proof we will use the inequalities:
\begin{equation}\label{eq: size of rho}
\forall\, x\neq 0 :\, |D^a\rho(x)|\leq C |x|^{-|a|-1},    
\end{equation}
for $a\in \N^m_0$ with $C=C(a)>0$.

Let $f\in \mathcal{S}_0(\R^m)$. The fact that $\rho^*(f)$ is well-defined, i.e., that it extends flatly over the origin and is Schwartz like at infinity, will follow once we prove the inequalities from the statement. 

The inequalities become equalities for $n=0$ and $k\in \N_0$:
\begin{align*}
    \fnorm{\rho^*(f)}_{0,k} =&\ \sup_{x\in \R^m\setminus \{0\}}\ \sup_{l\le k}|x|^{-l}|f(\rho(x))| \\
    =&\ \sup_{y\in \R^m\setminus \{0\}}\  \sup_{l\le k} |y|^{l}|f(y)|= \norm{f}_{0,k},
\end{align*} 
where we used $y=\rho(x)\in \R^m\setminus \{0\}$. 

For every $a \in \N_0^m$ with $|a|=n>0$, by using the general chain rule and \eqref{eq: size of rho}, we have for all $x\in \R^m\setminus \{0\}$:
\begin{equation}\label{eq: flat norm estimate}
 \begin{aligned}
    |D^{a}\rho^*(f)(x)|=&\ |\sum_{1\le|b|\le n}D^{b}(f)(\rho(x)){\sum}'\prod_{i=1}^{m}\prod_{j=1}^{b_i}D^{a_{ij}}(\rho)(x)|\\
    \le&\ C \sum_{1\le|b|\le n}\frac{|D^{b}(f)(\rho(x))|}{|x|^{n+|b|}}
\end{aligned}
\end{equation}
where $C =C(a)$ and the sum ${\sum}'$ runs over all $a_{ij}\in \N_0^m$ with
\[ |a_{ij}|\ne 0 \ \ \ \ \ \text{ and }\ \ \ \ \ \ \sum a_{ij}=a.\] 
Hence, by multiplying with $|x|^l$, for $l\leq k$, and setting $y=\rho(x)$, we obtain:
\begin{align*}
    \fnorm{D^{a}(\rho^*(f))}_{0,k} \le&\ C  \norm{f}_{n,k+2n}
\end{align*} 
which implies the first inequality. 

We prove now the second inequality. Let $a\in \N^m_0$ with $|a|=n>0$ and $k\in \N_0$. We use \eqref{eq: flat norm estimate} and the Taylor formula with integral remainder in \eqref{eq: taylor} for $d=n-|b|$ to obtain, for any $x\in \R^m\setminus \{0\}$ and $y=\rho(x)$, the following:
\begin{equation*}
 \begin{aligned}
    |x|^{2n+k}|D^{a}\rho^*(f)(x)| \le&\ C \sum_{1\le|b|\le n}|D^{b}(f)(\rho(x))||x|^{k+n-|b|}\\
   = &\ C \sum_{1\le|b|\le n}|D^{b}(f)(y)||y|^{|b|-k-n}\\
    \le &\ C (\sum_{1\le|b|< n}\sum_{|\tilde{b}|=n-|b|}|y|^{-k} \int_{0}^1(1-s)^{|\tilde{b}|-1}|D^{b+\tilde{b}} f(sy)|\dif s\\
    &\ \ \ \ \ +\fnorm{f}_{n,k})\\
    \le &\ C \fnorm{f}_{n,k}(1+\sum_{1\le|b|< n}\int_{0}^1s^k(1-s)^{n-|b|-1}\dif s)\\
    \le &\ C\fnorm{f}_{n,k}.
\end{aligned}
\end{equation*}
This implies the second inequality from the statement. 
\end{proof}

Let $C_r\subset \R^m$ denote the complement of the ball of radius $r>0$, i.e., 
\[C_r:=\R^m\setminus B_r\] 
and consider smooth functions on $C_r$ which are Schwartz-like at infinity: 
\[ \mathcal{S}(C_r ):=\{f\in C^{\infty}(C_r;\C)\, |\, \forall\,  k,n\in \N_0:\norm{f}_{n,k,r}<\infty\}.\]
Here the norms are defined for $k,n\in \N_0$ as in \eqref{eq: Schwartz norm}:
\[\norm{f}_{n,k,r}:=\sup_{ x\in C_r}\sup_{a\in \N^m_0: \, |a|\le n}\lvert x\rvert^k \lvert D^{a}f(x)\rvert.\]
For every $0<r$, we obtain the Fr\'echet space:
\[\big(\mathcal{S}(C_r ),\{\norm{\cdot}_{n,k,r}\}_{k,n\in \N_0}\big).\]

\begin{corollary}\label{corollary: inversion r}
The map $\rho $ induces a continuous linear isomorphism 
\[\rho_{B}^*:= (\rho |_{\overline{B}_r})^* : \mathcal{S}(C_{1/r})\to C^{\infty}_0(\overline{B}_r,\C)\]
with continuous inverse $\rho_C^*:= (\rho|_{C_{{1/r}}})^*$. Moreover, the map $\rho_B^* $ satisfies
\begin{align*}
    \fnorm{\rho_B^*(f)}_{n,k,r} \le &\ C 
    \norm{f}_{n,k+2n,1/r}
\end{align*}
for $k,n\in\N_0$ and $f\in \mathcal{S}(C_{1/r})$ and similarly $\rho_C^*$ satisfies the estimate
\begin{align*}
    \norm{\rho_C^*(g)}_{n,k+2n,1/r} \le&\ C
    \fnorm{g}_{n,k,r} 
\end{align*} 
for $k,n \in\N_0$, $g\in C^{\infty}_0(\overline{B}_r,\C)$ and a constant $C=C(k,n)$. 
\end{corollary}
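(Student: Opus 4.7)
The plan is to deduce the corollary directly from Proposition \ref{proposition: inversion} by observing that the inversion $\rho$ maps $\overline{B}_r \setminus \{0\}$ bijectively onto $C_{1/r}$, since $|\rho(x)| = 1/|x|$ for $x \neq 0$. Consequently, for $f \in \mathcal{S}(C_{1/r})$, the composition $f \circ \rho$ is well defined and smooth on $\overline{B}_r \setminus \{0\}$. To define $\rho_B^*(f)$ on the whole closed ball $\overline{B}_r$, I would extend it by $0$ at the origin, and show the extension lies in $C^{\infty}_0(\overline{B}_r, \C)$; analogously, $\rho_C^*(g) := g\circ \rho|_{C_{1/r}}$ for $g\in C^{\infty}_0(\overline{B}_r, \C)$.

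The two estimates stated in the corollary are the localized versions of those in Proposition \ref{proposition: inversion}. First I would observe that the proof of that proposition is entirely pointwise: the inequality
\[|D^a \rho^*(f)(x)| \le C\sum_{1 \le |b| \le |a|}\frac{|D^b f(\rho(x))|}{|x|^{|a|+|b|}}\]
from \eqref{eq: flat norm estimate} holds for every $x \neq 0$, and the Taylor-remainder bound \eqref{eq: taylor} used in the reverse direction holds for any flat function on a neighborhood of the origin. Restricting to $x \in \overline{B}_r$ (resp.\ $y = \rho(x) \in C_{1/r}$) and taking suprema over the appropriate domains yields
\[\fnorm{\rho_B^*(f)}_{n,k,r} \le C\norm{f}_{n,k+2n,1/r} \quad \text{and} \quad \norm{\rho_C^*(g)}_{n,k+2n,1/r} \le C\fnorm{g}_{n,k,r},\]
with the same constants. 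These estimates simultaneously verify that $\rho_B^*(f)$ is genuinely flat at $0$ (so extends smoothly by zero) and that $\rho_C^*(g)$ is Schwartz-like at infinity, so the maps land in the claimed function spaces and are continuous with respect to the Fr\'echet topologies.

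Finally, since $\rho \circ \rho = \id$ on $\R^m \setminus \{0\}$, the compositions $\rho_B^* \circ \rho_C^*$ and $\rho_C^* \circ \rho_B^*$ are the identity on $\overline{B}_r \setminus \{0\}$ and $C_{1/r}$ respectively, and hence on the whole domains after taking the flat extension at $0$. Thus $\rho_B^*$ and $\rho_C^*$ are mutually inverse continuous linear isomorphisms, completing the proof.

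The only minor subtlety — which is not really an obstacle — is to justify that defining $\rho_B^*(f)$ to be $0$ at the origin indeed produces a smooth function on $\overline{B}_r$; but this follows immediately from the inequality $\fnorm{\rho_B^*(f)}_{n,k,r} < \infty$ for all $n,k$, which guarantees flatness at $0$ in the sense of all derivatives vanishing there, together with smoothness away from $0$ that is automatic from the composition.
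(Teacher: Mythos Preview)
Your proposal is correct and follows essentially the same approach as the paper, which simply states that the proof of Proposition~\ref{proposition: inversion} applies verbatim upon restricting to $\overline{B}_r$ and $C_{1/r}$. You have spelled out more of the details (the bijection $\overline{B}_r\setminus\{0\}\leftrightarrow C_{1/r}$, the pointwise nature of the estimates, the smooth flat extension at the origin, and the mutual inverse relation), but the underlying argument is identical.
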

\begin{proof}
The proof of the previous proposition applies by working with $\overline{B}_r$ and $C_{1/r}$ instead of the whole space.
\end{proof}

We will need the following version of Whitney's Extension Theorem: 

\begin{lemma}\label{lemma: extension}
There exists a linear map 
\[\epsilon : C^{\infty}_0(\overline{B}_1,\C)\hookrightarrow \mathcal{S}_0(\R^m)\]
such that, for each $f\in C_0^{\infty}(\overline{B}_1;\C)$, the function $\epsilon(f)$ is a compactly supported extension of $f$: 
\[ \epsilon(f)|_{\overline{B}_1}=f.\]
Moreover, the extension operator satisfies:
\begin{align*}
      \fnorm{\epsilon(f)}_{n,k}\le &\ C 
     \fnorm{f}_{n,k, 1} 
\end{align*} 
for all $n,k\in \N_0$ and constants $C=C(k,n)$.
\end{lemma}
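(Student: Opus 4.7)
The plan is to combine a classical smooth extension operator with a fixed cutoff function. First I would fix once and for all a cutoff function $\chi \in C^\infty_c(\R^m)$ satisfying $\chi \equiv 1$ on $\overline{B}_1$ and $\mathrm{supp}(\chi) \subset \overline{B}_2$, together with a continuous linear extension operator $E \colon C^\infty(\overline{B}_1) \to C^\infty(\overline{B}_2)$ in the sense of Seeley/Whitney, which satisfies bounds of the form $\norm{E(f)}_{n,2} \leq C_n \norm{f}_{n,1}$ for every $n \in \N_0$. Such an operator exists because $\overline{B}_1$ is a smooth compact domain with boundary; one standard construction uses Seeley's reflection across the boundary sphere in the outward normal direction. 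I would then define
\[ \epsilon(f) := \chi \cdot E(f),\]
which is linear in $f$, agrees with $f$ on $\overline{B}_1$ (since $\chi \equiv 1$ there), and is compactly supported in $\overline{B}_2$.

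Next I would verify that $\epsilon(f) \in \mathcal{S}_0(\R^m)$. Since $f$ is flat at the origin and $\epsilon(f) = f$ on a neighborhood of $0$, the extension is also flat at $0$. Since $\epsilon(f)$ has compact support in $\overline{B}_2$, it is trivially Schwartz-like at infinity. Together these place $\epsilon(f)$ in $\mathcal{S}_0(\R^m)$.

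For the estimate $\fnorm{\epsilon(f)}_{n,k} \leq C \fnorm{f}_{n,k,1}$, the plan is to split $\R^m \setminus \{0\}$ into three regions. On $\overline{B}_1 \setminus \{0\}$ we have $\epsilon(f) = f$, so $|x|^{-l}|D^a \epsilon(f)(x)| \leq \fnorm{f}_{n,k,1}$ directly for all $|a| \leq n$ and $l \leq k$. On the annulus $\overline{B}_2 \setminus \overline{B}_1$ the weight satisfies $|x|^{-l} \leq 1$, so it suffices to bound $|D^a \epsilon(f)(x)|$ uniformly; by the Leibniz rule together with the boundedness of all derivatives of $\chi$ and Seeley's estimate we get $|D^a \epsilon(f)(x)| \leq C \norm{E(f)}_{n,2} \leq C \norm{f}_{n,1}$, and since $\norm{f}_{n,1} \leq \fnorm{f}_{n,k,1}$ (by taking $l = 0$ in the definition of the flat norm) this closes the estimate. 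On $\R^m \setminus \overline{B}_2$ the function $\epsilon(f)$ vanishes identically.

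I do not expect any serious obstacle: the only nontrivial input is the existence of a smooth extension operator with $C^n$-bounds depending only on $n$, for which any standard reference (Seeley, Stein, Whitney) suffices. All flatness and weighted-norm issues collapse into a trivial case analysis, because the weight $|x|^{-k}$ is only dangerous near the origin, a region in which $\epsilon(f)$ coincides with $f$ by construction.
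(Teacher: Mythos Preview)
Your proposal is correct and follows the same strategy as the paper: invoke a Seeley/Whitney-type extension operator with $C^n$-bounds (the paper cites Hamilton and writes out an explicit radial formula, you cite Seeley abstractly and multiply by a separate cutoff), then observe that the weighted estimate is immediate because the weight $|x|^{-l}$ is dangerous only near $0$, where $\epsilon(f)=f$. One small slip: the ball norm $\fnorm{f}_{n,k,1}$ is defined with the \emph{fixed} weight $|x|^{-k}$ (no supremum over $l\le k$), so your justification ``by taking $l=0$'' is not quite right; the inequality $\norm{f}_{n,1}\le\fnorm{f}_{n,k,1}$ holds instead simply because $|x|^{-k}\ge 1$ on $\overline{B}_1$ and $D^af(0)=0$.
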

\begin{proof}
Such extension results go back to Whitney's Extension Theorem \cite{WY}; for manifolds with boundary see \cite{Seeley}, and for the state of the art results see \cite{Baldi}. 

Our statement for the usual $C^n$-norms follows from the proof of the more general statement by Hamilton in \cite[Corollary II.1.3.7]{Ham}. By using the methods there, we can derive an explicit formula for the map $\epsilon$:
\[ \epsilon(f)(x):=\begin{cases}
f(x) &\text{ if } |x|\le 1\\
\chi(\frac{1}{|x|})\int_0^{\infty}\phi(t)(\chi f)(\frac{x}{|x|^{1+t}})\dif t&\text{ if } 1\le |x|
\end{cases}\]
for $f\in C^{\infty}_0(\overline{B}_1)$. Here $\chi\in C^{\infty}([0,\infty))$ denotes a function which satisfies
\[ \chi(u)=\begin{cases}
0&\text{ if } u\le \frac{1}{4}\\
1&\text{ if } \frac{1}{2}\le u
\end{cases}\]
and $\phi\in C^{\infty}([0,\infty))$ satisfies
\[ \int_0^{\infty}\phi(t)t^n\dif t=(-1)^n \qquad \forall \ n\in\N_0.\]
Since by \cite{Ham} the operator $\epsilon$ satisfies the estimates with respect to the $C^n$-norms, it can be easily shown that it satisfies also the estimates from the statement.
\end{proof}

\subsection{Convolution, Fourier theory and regularization}\label{section: recall ft}

In this subsection we recall some basics of Fourier theory which will be used to construct the smoothing operators. 

An important operation on the Schwartz space is the \textbf{convolution} of two functions $f,g\in \mathcal{S}(\R^m)$, defined by:
\[(f*g)(x):=\int_{\R^m}f(x-y)g(y)\dif y \]
We have that $f*g\in \mathcal{S}(\R^m)$ and for any $a\in \N_0^m$ we have
\[ D^a(f*g)=(D^af)*g=f*(D^a g).\]
Recall also the \textbf{Fourier transform}
\[ \hat{f}(\xi):= \Big(\frac{1}{\sqrt{2\pi}}\Big)^{m} \int_{\R^m} f(x) e^{-i x\cdot \xi  }\dif x,\]
which is an automorphism of $\mathcal{S}(\R^m)$ with inverse:
\[ f(x)=\Big(\frac{1}{\sqrt{2\pi}}\Big)^{m} \int_{\R^m} \hat{f}(\xi) e^{i  x\cdot\xi }\dif \xi . \]
The Fourier transform satisfies the following properties:
\begin{equation}\label{eq: fundamental prop fourier}
    \begin{array}{rclcrcl}
        \hat{\hat{f}}(x)&=&f(-x) &\text{ and }&\widehat{f_{\lambda}}(\xi) &=& \hat{f}(\frac{\xi}{\lambda}), \\
        \hat{f}(\xi)&=&\bar{\hat{f}}(-\xi) \ \ \ & \Leftrightarrow& \ \ \ f&\in& \mathcal{S}(\R^m;\R),\\
        \widehat{D^a f}&=&(i\xi)^a\hat{f}& \text{ and }& \widehat{x^a f} &=&(-i)^{|a|}D^a\hat{f},\\
        \widehat{f*g} &=& (\sqrt{2\pi})^m \hat{f}\cdot \hat{g}&  \text{ and }& \widehat{f\cdot g}& =&  (\sqrt{2\pi})^m\hat{f}*\hat{g}
    \end{array}
\end{equation} 
for $f\in \mathcal{S}(\R^m)$, $\lambda \in \C^{\times}$, $a\in \N_0^m$ and $f_{\lambda}$ is defined by $f_{\lambda}(x):= \lambda^m f(\lambda \cdot x)$.


\subsection{Smoothing operators on the Schwartz space}\label{section: smoothing nash}

In this subsection we follow Nash \cite{Nash} to construct smoothing operators $\tilde{S}_t$ on the Schwartz space $\mathcal{S}(\R^m)$ for $0<t$. Nash proves estimates of $\tilde{S}_t$ for the standard norms $\norm{\cdot}_n$. Our goal is to provide estimates of $\tilde{S}_t$ with respect to the norms $\norm{\cdot}_{n,k}$ for all $k,n\in \N_0$. 

Fix a function $\chi \in C^{\infty}(\R)$ such that
\begin{align}\label{eq: bump}
    \chi(u)=\begin{cases}
		1&\text{ if }  u\le 1\\
		\text{monotone decreasing } &\text{ if } u\in (1,2)\\
		0&\text{ if }2\le u
	\end{cases}
\end{align} 
We define the function $K\in \mathcal{S}(\R^m)$ by its Fourier transform
\begin{align*}
    \hat{K}(\xi):=\Big(\frac{1}{\sqrt{2\pi}}\Big)^m \prod_{i=1}^m\chi(|\xi_i|).
\end{align*}

In the definition of $K$, we deviate slightly from Nash's construction, in an unessential way. The constant in the definition of $K$ ensures that:
\[ \int_{\R^m}K(x)=1.\]
For $t>0$, set $K_t(x):=t^mK(t\cdot x)$. Define the \textbf{smoothing operator} by
\begin{equation*}
	\begin{array}{cccc}
		\tilde{S}_t:&\mathcal{S}(\R^m)&\to &\mathcal{S}(\R^m)\\
		&f&\mapsto& K_t * f
	\end{array}
\end{equation*}
\begin{remark}
Note that $\tilde{S}_t$ preserves the $\R$-valued functions by \eqref{eq: fundamental prop fourier}. Therefore, so will all the other smoothing operators which we build below. 
\end{remark}

We show that these operators are indeed smoothings:

\begin{lemma}\label{eq: first smoothing original}
The operators $\tilde{S}_t$ satisfy:
\begin{align*}
	\norm{\tilde{S}_t(f)}_{n+l,k}\le C t^{l}\norm{f}_{n,k}
\end{align*}
for all $f\in \mathcal{S}(\R^m)$, all $k,l,n\in \N_0$ and all $1\leq t$, with $C=C(k,l,n)$.
\end{lemma}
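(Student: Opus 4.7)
The plan is to estimate $|x|^j\,|D^a(\tilde{S}_t(f))(x)|$ uniformly over $x\in \R^m$, multi-indices $a$ with $|a|\le n+l$, and integers $0\le j\le k$, and then take the supremum to recover $\norm{\tilde{S}_t(f)}_{n+l,k}$. The guiding idea is that all the required $l$ extra derivatives can be pushed onto the smoothing kernel $K_t$, while the $|x|^j$ weight is transferred to $K_t$ via the triangle inequality.

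First, I would split the multi-index as $a=b+c$ with $|b|=\min(|a|,l)\le l$ and $|c|=|a|-|b|\le n$ (possible since $|a|\le n+l$), and use the standard convolution rule $D^a(K_t*f)=(D^b K_t)*(D^c f)$, together with $|x|^j\le C(|x-y|^j+|y|^j)$, to obtain
\begin{align*}
|x|^j\bigl|D^a(\tilde{S}_t f)(x)\bigr|\ \le\ C\int_{\R^m}\bigl(|x-y|^j+|y|^j\bigr)\,|D^b K_t(x-y)|\,|D^c f(y)|\dif y.
\end{align*}

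I would then split the right-hand side into two pieces. In the ``$|y|^j$'' piece, the factor $|y|^j|D^c f(y)|$ is bounded by $\norm{f}_{n,k}$ since $|c|\le n$ and $j\le k$, reducing the estimate to $\norm{f}_{n,k}\int|D^b K_t(z)|\dif z$. In the ``$|x-y|^j$'' piece, I pull out $\|D^c f\|_\infty \le \norm{f}_{n,k}$, leaving $\int|z|^j|D^b K_t(z)|\dif z$. The scaling $D^b K_t(z)=t^{m+|b|}(D^b K)(tz)$ together with a change of variable $w=tz$ gives
\begin{align*}
\int |D^b K_t(z)|\dif z = t^{|b|}\int|D^b K(w)|\dif w,\qquad
\int|z|^j|D^b K_t(z)|\dif z = t^{|b|-j}\int|w|^j|D^b K(w)|\dif w,
\end{align*}
and both integrals on the right are finite because $K\in \mathcal{S}(\R^m)$.

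Finally, using the hypothesis $t\ge 1$ and $|b|\le l$, we have $t^{|b|}\le t^l$ and $t^{|b|-j}\le t^{|b|}\le t^l$ (this is the only place where $t\ge 1$ is used, and it is essential to absorb the potentially negative exponent $|b|-j$). Summing the two pieces over all the ways of splitting $a=b+c$ yields $|x|^j|D^a(\tilde{S}_t f)(x)|\le C\,t^l\norm{f}_{n,k}$ with a constant $C=C(k,l,n)$ that collects the binomial factor from the triangle inequality and the $L^1$-norms of $|w|^j D^b K(w)$ for the finitely many admissible $(b,j)$. There is no serious obstacle; the argument is essentially bookkeeping, and the only delicate point is exploiting $t\ge 1$ so that the scaling gain $t^{|b|}$ dominates the scaling loss $t^{-j}$.
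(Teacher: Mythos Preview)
Your proof is correct and follows essentially the same approach as the paper: split the derivative $a=b+c$ so that at most $l$ derivatives fall on $K_t$ and at most $n$ on $f$, use the triangle inequality $|x|^j\le C(|x-y|^j+|y|^j)$ to distribute the weight, and then evaluate the resulting integrals via the scaling $D^bK_t(z)=t^{m+|b|}(D^bK)(tz)$ and the substitution $w=tz$, invoking $t\ge 1$ to absorb the factor $t^{-j}$. The only cosmetic difference is that the paper fixes $|a|=n+l$ and $|b|=l$ exactly, whereas you also cover $|a|<n+l$ by taking $|b|=\min(|a|,l)$; also, your final sentence about ``summing over all the ways of splitting $a=b+c$'' should simply read ``summing the two pieces,'' since you already fixed one splitting.
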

\begin{proof}
For the proof note that for any $a\in \N_0^m$ we have
\[ D^aK_t(x)=t^{m+|a|}D^a(K)(tx).\]
Let $k, l, n\in \N_0$, and let $a\in \N^m_0$ with $|a|=n+l$. Decompose $a=a^1+a^2$, with $|a^1|=l$ and $|a^2|=n$. Using \eqref{eq: fundamental prop fourier} we obtain for any $f\in \mathcal{S}(\R^m)$:
\begin{align*}
	|x|^k|D^{a}(K_t&\, *f)(x)| = |x|^k||(D^{a^1}(K_t) *D^{a^2}(f))(x)|\\
	=& \ t^{l+m}|x|^{k}|\int_{\R^m}D^{a^1}(K)(t(x-y))D^{a^2}(f)(y)\dif y|\\
	\le & \ C t^{l+m}\Big(\int_{\R^m}|D^{a^1}(K)(t(x-y))||y|^{k}|D^{a^2}(f)(y)|\dif y\\
	& \ +t^{-k}\int_{\R^m}(t|x-y|)^{k}|D^{a^1}(K)(t(x-y))||D^{a^2}(f)(y)|\dif y\Big)
\end{align*}
where we used the estimate
\begin{align}\label{eq: triangle}
    |x|^k=|x-y+y|^k\le C(|x-y|^k + |y|^k)
\end{align}
for some constant $0<C(k)$. We estimate the first term by
\begin{align*}
    t^{l+m}\int_{\R^m}|D^{a^1}(K)&\,(t(x-y))||y|^{k}|D^{a^2}(f)(y)|\dif y\\
    \le &\ t^l \norm{f}_{n,k} \int_{\R^m}|D^{a^1}(K)(z)|\dif z\
    \le  C t^l \norm{f}_{n,k}
\end{align*}  
where we used the change of coordinates given by $z=t(x-y)$. The second term can similarly be estimated by using also that $t^{-k}\leq 1$:
\begin{align*}
    t^{l+m}\int_{\R^m}(t|x-y|)^{k}&\,|D^{a^1}(K)(t(x-y))||D^{a^2}(f)(y)|\dif y\\
    \le &\ t^{l} \norm{f}_{n,0} \int_{\R^m}|z|^k|D^{a^1}(K)(z)|\dif z \ \le C  t^{l} \norm{f}_{n,k}.
\end{align*}
This proves the statement.
\end{proof}

Next we show that the family $\tilde{S}_t$ approximates the identity as $t\to \infty$.

\begin{lemma}\label{eq: second smoothing original}
The operators $\tilde{S}_t$ satisfy:
\begin{align*}
	\norm{(\id - \tilde{S}_t)(f)}_{n,k}\le C t^{-l} \norm{f}_{n+l,k}
\end{align*}for all $f\in \mathcal{S}(\R^m)$, all $k,l,n\in \N_0$ and all $t\geq 1$, with $C=C(k,l,n)$.
\end{lemma}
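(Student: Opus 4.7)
The cornerstone of the argument is an observation about $K$: since $\chi\equiv 1$ on $[0,1]$, the Fourier transform $\hat K(\xi)=(2\pi)^{-m/2}\prod_i\chi(|\xi_i|)$ is \emph{constant} on $[-1,1]^m$, so $D^a\hat K(0)=0$ for every multi-index $a$ with $|a|\ge 1$. The duality between multiplication by $x^a$ and differentiation in $\xi$ (cf.\ \eqref{eq: fundamental prop fourier}) then gives the vanishing-moments property
\[\int_{\R^m}K(y)\,y^a\,\dif y=0\ \text{ for }|a|\ge 1,\qquad \int_{\R^m}K(y)\,\dif y=1.\]
Rescaling $z=ty$ yields $\int K_t(y)\,y^a\,\dif y=t^{-|a|}\int K(z)\,z^a\,\dif z$, so in particular $\int|y|^j|K_t(y)|\,\dif y\le C_j\,t^{-j}$ for every $j\ge 0$, while the higher moments remain zero.

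Using $\int K_t=1$, I would next rewrite
\[(\id-\tilde S_t)(f)(x)=\int_{\R^m}K_t(y)\bigl(f(x)-f(x-y)\bigr)\,\dif y\]
and Taylor-expand $f(x-y)$ around $x$ to order $l-1$ with integral remainder. By the vanishing moments above, every polynomial-in-$y$ term of degree $1,\dots,l-1$ integrates to zero against $K_t$, leaving
\[(\id-\tilde S_t)(f)(x)=c_l\sum_{|a|=l}\frac{1}{a!}\int K_t(y)\,y^a\int_0^1(1-s)^{l-1}(D^af)(x-sy)\,\dif s\,\dif y\]
for a combinatorial constant $c_l$. Differentiating this identity with a multi-index $b$, $|b|\le n$, simply shifts $D^b$ onto $f$, producing an integrand in $D^{a+b}f(x-sy)$ with $|a+b|=n+l$.

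To estimate the norm, fix $|b|\le n$ and $j\le k$, multiply the differentiated identity by $|x|^j$, and split $|x|^j\le C(|x-sy|^j+|y|^j)$ as in \eqref{eq: triangle}. The piece with $|x-sy|^j$ is bounded pointwise by $\norm{f}_{n+l,k}$ and contributes a factor $\int|y|^l|K_t(y)|\,\dif y\le Ct^{-l}$. The piece with $|y|^j$ is bounded by $\norm{f}_{n+l,0}\int|y|^{l+j}|K_t(y)|\,\dif y\le Ct^{-l-j}\norm{f}_{n+l,0}$, which is absorbed using $t\ge 1$ and $\norm{f}_{n+l,0}\le\norm{f}_{n+l,k}$. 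Taking the supremum over $x$, $b$, and $j$ gives the required bound. The trivial case $l=0$ follows from Lemma \ref{eq: first smoothing original} and the triangle inequality.

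The only non-routine ingredient is the first paragraph: the estimate relies critically on $\hat K$ being \emph{flat}, not merely smooth, at the origin, so that a single fixed kernel automatically furnishes arbitrarily many orders of approximation. Everything else — Taylor with integral remainder, the moment bounds for $K_t$, and the $|x|^j$ splitting — is standard convolution bookkeeping of the sort already carried out in Lemma \ref{eq: first smoothing original}.
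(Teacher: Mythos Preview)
Your proof is correct and takes a genuinely different route from the paper's. The paper proceeds via the fundamental theorem of calculus in $t$: it first proves separately that $\tilde S_t(f)\to f$ in every $\norm{\cdot}_{n,k}$, then writes $(\id-\tilde S_t)(f)=\int_t^\infty \partial_\tau(K_\tau*f)\,\dif\tau$ and analyzes $\partial_t K_t$ on the Fourier side. Because $\chi'$ is supported in $[1,2]$, the Fourier transform of $\partial_t K_t$ splits into pieces $\hat L_j$ that vanish for $|\xi_j|\le 1$; dividing by $(i\xi_j)^l$ and integrating by parts transfers $l$ derivatives onto $f$ and produces the factor $t^{-l-1}$, which integrates to $t^{-l}$.

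Your argument exploits the same structural feature of the kernel --- that $\chi$ is constant near $0$ --- but in its dual guise: $\hat K$ is constant on $[-1,1]^m$, hence $K$ has \emph{all} moments $\int y^aK(y)\,\dif y$ vanishing for $|a|\ge 1$. This lets you kill the Taylor polynomial of $f(x-y)$ outright and read off the $t^{-l}$ decay directly from the remainder, without any Fourier-side decomposition, without integration by parts, and without a separate convergence argument. Your approach is shorter and more elementary; the paper's approach, on the other hand, makes the mechanism behind the decay (support of $\widehat{\partial_t K_t}$ away from the origin) more transparent and generalizes readily to kernels for which only finitely many moments vanish.
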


\begin{proof}
First we claim that, for any $f\in \mathcal{S}(\R^m)$: 
\begin{align*}
\norm{f-\tilde{S}_t(f)}_{n,k}\xrightarrow{t\to \infty}0 \qquad \forall \ k,n\in\N_0.
\end{align*}
Note that, since $K$ is Schwartz function, for every $\eta>0$ we have
\begin{align*}
    \int_{|y|\geq \eta}|K_t(y)|\dif y
    =\int_{|x|\geq t \eta}|K(x)|\dif x
    \xrightarrow{t\to \infty} 0
\end{align*}
Let $\varepsilon>0$. For $R:= \norm{f}_{0,k+1}/\varepsilon$, we have that: \begin{align*}
    \sup_{x\in\R^m\setminus B_{R}} |x|^k|f(x)|\le \varepsilon
\end{align*}
For $x\in \R^m\setminus B_{2R}$, by using \eqref{eq: triangle}, we obtain
\begin{align*}
    |x|^k |(f-K_t*f)(x)|&\ = |x|^k|\int_{\R^m}K_t(y)(f(x-y)-f(x))\dif y|\\
    &\ \leq C\int_{B_{R}}(1+|y|^k|x-y|^{-k})|x-y|^k|K_t(y)f(x-y)|\dif y\\
    &\ \quad + |x|^k|f(x)|\int_{B_{R}}|K_t(y)|\dif y\\
    &\ \quad +  |x|^k|\int_{\R^m\backslash B_{R}}K_t(y)(f(x-y)-f(x))\dif y|\\
    &\ \le C\varepsilon +2\norm{f}_{0,0}|x|^k\int_{\R^m\setminus B_{R}}|K_t(y)|\dif y
    \xrightarrow{t\to \infty} \varepsilon C.
\end{align*}

Choose $\delta>0$ such that for all $x\in \overline{B}_{2R}$ and $y\in \overline{B}_{\delta}$:
\begin{align*}
(2R)^k|f(x)-f(x-y)|\le \varepsilon
\end{align*}
Then, for $x\in \overline{B}_{2R}$, we obtain:
\begin{align*}
    |x|^k |(f-K_t*f)(x)|&\le |x|^k|\int_{\R^m}K_t(y)(f(x-y)-f(x))\dif y|\\
    &\le \varepsilon +2\norm{f}_{0,0}|x|^k\int_{\R^m\setminus B_{\delta}}|K_t(y)|\dif y \xrightarrow{t\to \infty}\varepsilon
\end{align*}
This implies the claim made at the beginning.

In order to obtain the desired estimate we use the estimate
\begin{align*}
    \norm{\tilde{S}_{t_2}(f)-\tilde{S}_{t_1}(f)}_{n,k}\le \int_{t_1}^{t_2} \norm{\partial_t\tilde{S}_t(f)}_{n,k}\dif t
\end{align*}
for the limit $t_2\to \infty$. Taking the derivative with respect to $t$ of $\tilde{S}_t(f)$ gives
\[ \partial_t(K_t*f)=(\partial_tK_t)*f.\]
Note that the Fourier transform of $\partial_t K_t$ is given by
 \begin{equation*}
          \widehat{\partial_tK_t}(\xi)=\partial_t \hat{K_t}(\xi)=\partial_t \hat{K}(\xi/t)=
          -t^{-2}\sum_{j=1}^m|\xi_j|\chi^{(1)}(\frac{|\xi_j|}{t})\prod_{i\ne j}^m\chi(\frac{|\xi_i|}{t})
 \end{equation*}
We set $L:=\partial_tK_t|_{t=1}$. Similar as for $K_t$ we have 
\[ \partial_tK_t(x)=t^{m-1}L(t\cdot x)\]
We decompose $L$ in the following way: define the functions $\hat{L}_j$ by
\begin{equation*}
         \hat{L}_j(\xi):=|\xi_j|\chi^{(1)}(|\xi_j|)\prod_{i\ne j}^m\chi(|\xi_i|)\ \ \ \ \ \ \text{ for }\ j\in \{1,\dots,m\}. 
\end{equation*} 
Note that the $L_j$ satisfy the following properties
\begin{align*}
         L=\sum_{j=1}^mL_j, \qquad \hat{L}_j(\xi)=\bar{\hat{L}}_j(-\xi), 
\end{align*}
and that $\hat{L}_j$ is supported in $[-2,2]^m$ and vanishes for $|\xi_j|\leq 1$. Therefore we may define for $l\geq 0$ the functions $L_j^l$ via their Fourier transforms:
\[ \hat{L}_j^l := \big(\frac{1}{i\cdot \xi_j}\big) ^l \cdot \hat{L}_j\ \in \mathcal{S}(\R^m). \]
By \eqref{eq: fundamental prop fourier} the kernels $L_j^l$ satisfy for $k<l\in \N_0$ the following properties:
\begin{equation*}
    \begin{array}{c}
         \partial_j^kL_j^l=L_j^{l-k}\ \ \ \ \ \ \ \ \ \hat{L}_j^l(\xi)=\bar{\hat{L}}_j^l(-\xi) 
    \end{array} 
\end{equation*}
Hence we obtain for all $l, n\in \N_0$, all $f\in \mathcal{S}(\R^m)$ and any $a\in \N_0^m$ with $|a|=n$ the estimate
\begin{alignat*}{3}
	& |D^{a}(\partial_t K_t *f)(x)|&&\ =&&\ |(\partial_t K_t *D^a(f))(x)|\\
	& &&\ =&&\ t^{m-1}|\int_{\R^m}L(t(x- y))D^a(f)(y)\dif y|\\
	& && \ =&& \ t^{-1}|\int_{\R^m}\sum_{j=1}^mL_j(z)D^{a}(f)(x-\frac{z}{t})\dif z|\\
	& && \ =&& \ t^{-1}|\int_{\R^m}\sum_{j=1}^m\partial_{j}^l(L_j^l)(z)D^{a}(f)(x-\frac{z}{t}))\dif z|\\
	& && \ =&& \ t^{-1}|\int_{\R^m}\sum_{j=1}^mL_j^l(z)\partial_{j}^l(D^{a}(f)(x-\frac{z}{t}))\dif z|\\
	& && \ =&& \ t^{-l-1}|\int_{\R^m}\sum_{j=1}^mL_j^l(z)D^{a+l\cdot e_j}(f)(x-\frac{z}{t})\dif z|\\
	& && \leq && t^{m-l-1}\sum_{j=1}^m \int_{\R^m}|L_{j}^l(t(x-y))||D^{a+l\cdot e_j}(f)(y)|\dif y
\end{alignat*}
where we used the change of coordinates given by $z=t(x-y)$ and $e_j $ denotes the $j$-th standard vector in $\R^m$. Using \eqref{eq: triangle} as in the proof of the previous lemma we obtain, for $k\geq 0$:
\begin{align*}
	|x|^k|D^{a}(\partial_t K_t &*f)(x)|  \leq  t^{m-l-1}|x|^{k}\sum_{j=1}^m \int_{\R^m}|L_{j}^l(t(x-y))||D^{a+l\cdot e_j}(f)(y)|\dif y\\
 \leq\ &\ Ct^{m-l-1}\sum_{j=1}^m \int_{\R^m}|x-y|^{k}|L_{j}^l(t(x-y))||D^{a+l\cdot e_j}(f)(y)|\dif y\\
	 +\ &\  Ct^{m-l-1}\sum_{j=1}^m \int_{\R^m}|L_{j}^l(t(x-y))||y|^{k}|D^{a+l\cdot e_j}(f)(y)|\dif y\\
	 \leq \ &\  C\cdot t^{-l-1}(\norm{f}_{n+l,k}+t^{-k}\norm{f}_{n+l,0})
\end{align*}
which, using $t\geq 1$, leads to the estimate
\begin{align*}
	\norm{\partial_t K_t*f}_{n,k}\le C\cdot t^{-l-1}\norm{f}_{n+l,k}.
\end{align*}

Hence integration from $t$ to $\infty$ yields the statement.
\end{proof}

\subsection{Smoothing operators for flat functions}\label{section: smoothing flat}

In this section we use the smoothing operators on the Schwartz space from the previous section to construct smoothing operators $S_{t,r}$ on the space $C^{\infty}_0(\overline{B}_r)$. We define also a second family of operators $T_{s,r}$ on $ C^{\infty}_0(\overline{B}_r)$, which can be thought of as smoothing operators with respect to the $k$-index of the norms $\fnorm{\cdot}_{n,k,r}$. Finally, by combining these two, we build smoothing operators $S_{s,t,r}$ on $ C^{\infty}_0(\overline{B}_r)$ for $s,t>0$.

\subsubsection{Smoothing in the derivatives}
We first define $S_{t,1}:C^{\infty}_0(\overline{B}_1)\to C^{\infty}_0(\overline{B}_1)$ as the composition of the maps
\begin{equation*}
\begin{tikzpicture}[baseline= (a).base]
\node[scale=.85] (a) at (0,0){
\begin{tikzcd}
C^{\infty}_0(\overline{B}_1)\arrow{r}{\epsilon}& \mathcal{S}_0(\R^m)\arrow{r}{\rho^*}&  \mathcal{S}_0(\R^m)\arrow{r}{\tilde{S}_{t}}& \mathcal{S}(\R^m)\arrow{r}{\iota^*}& \mathcal{S}(C_1)\arrow{r}{\rho_{B}^*}& C^{\infty}_0(\overline{B}_1)
\end{tikzcd}
};
\end{tikzpicture}
\end{equation*}
where $\iota: C_1 \hookrightarrow \R^m$ is the inclusion and $\rho_B^*$ was defined in Corollary \ref{corollary: inversion r}. For $0<r $ denote by $m_r :\R^m\to\R^m $ the multiplication by $r$, i.e.
\[m_r(x) := \ r\cdot x\] 
Then we define the operators $S_{t,r}$ by
\begin{equation*}
\begin{array}{cccc}
S_{t,r}: & C^{\infty}_0(\overline{B}_r)&\to & C^{\infty}_0(\overline{B}_r)\\
&f &\mapsto& m_{\frac{1}{r}}^* \circ S_{t,1} \circ m_r^*(f)
\end{array}
\end{equation*}

\begin{proposition}\label{proposition: derivatiave smoothing}
The operators $S_{t,r}$ satisfy:
\begin{align*}
    \fnorm{S_{t,r}(f)}_{n+l,k,r} \le &\ C  t^{l}\fnorm{f}_{n,k+2 l,r}\\
    \fnorm{(S_{t,r}-\id)(f)}_{n,k+2l,r} \le &\  C t^{-l}\fnorm{f}_{n+l,k,r}
\end{align*}
for all $f\in C^{\infty}_0(\overline{B}_r)$, all $k,l,n\in \N_0$ and all $t\geq 1$, with $C=C(k,l,n,r)$ depending continuously on $r>0$.
\end{proposition}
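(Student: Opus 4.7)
The plan is to first reduce to the case $r=1$ by the scaling map $m_r$, and then prove the two estimates by chasing a function through the composition defining $S_{t,1}$. For the scaling reduction, the change of variables shows that $\fnorm{m_r^* f}_{n,k,1}$ and $\fnorm{f}_{n,k,r}$ differ only by a factor of the form $r^k\max(1,r)^n$, depending continuously on $r$, and the same for the inverse scaling. Thus it suffices to check both inequalities for $S_{t,1}$, and then push the constants through the dilations.

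For the first estimate, I would follow $f \in C^\infty_0(\overline{B}_1)$ along the arrows $\epsilon \to \rho^* \to \tilde{S}_t \to \iota^* \to \rho_B^*$, applying at each stage the bound proven earlier in this appendix. Starting from $\fnorm{f}_{n,k+2l,1}$, Lemma on the extension gives $\fnorm{\epsilon(f)}_{n,k+2l} \le C\fnorm{f}_{n,k+2l,1}$. Applying the inversion estimate from Proposition~\ref{proposition: inversion} yields
\[\norm{\rho^*(\epsilon(f))}_{n,k+2l+2n} \le C\fnorm{\epsilon(f)}_{n,k+2l},\]
and then Lemma~\ref{eq: first smoothing original} for the Nash smoothing produces
\[\norm{\tilde{S}_t \rho^*\epsilon(f)}_{n+l,k+2l+2n} \le C t^l \norm{\rho^*\epsilon(f)}_{n,k+2l+2n}.\]
The crucial point is that $k+2l+2n = k+2(n+l)$ exactly matches what Corollary~\ref{corollary: inversion r} needs to convert back to the flat norm: after restricting along $\iota^*$ and applying $\rho_B^*$, one obtains $\fnorm{S_{t,1}(f)}_{n+l,k,1} \le C\norm{\tilde{S}_t\rho^*\epsilon(f)}_{n+l,k+2(n+l)}$, and composing all of these gives the desired estimate.

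For the second estimate, the key observation is that $\rho$ is an involution and $\epsilon$ is an extension, so $\rho_B^* \circ \iota^* \circ \rho^* \circ \epsilon = \id$ on $C^\infty_0(\overline{B}_1)$. Hence
\[(S_{t,1}-\id)(f) = \rho_B^* \circ \iota^* \circ (\tilde{S}_t - \id) \circ \rho^* \circ \epsilon(f),\]
so one performs exactly the same chase but uses Lemma~\ref{eq: second smoothing original} in the middle. Starting from $\fnorm{f}_{n+l,k,1}$, the extension and inversion bounds give $\norm{\rho^*\epsilon(f)}_{n+l,k+2(n+l)} \le C\fnorm{f}_{n+l,k,1}$, then Lemma~\ref{eq: second smoothing original} yields $\norm{(\tilde{S}_t-\id)\rho^*\epsilon(f)}_{n,k+2(n+l)}\le Ct^{-l}\norm{\rho^*\epsilon(f)}_{n+l,k+2(n+l)}$, and finally $\rho_B^*\circ\iota^*$ converts the Schwartz norm of index $(n,k+2(n+l)) = (n,(k+2l)+2n)$ back into the flat norm of index $(n,k+2l)$, which is precisely what we want.

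The only step requiring care is the bookkeeping of indices through $\rho^*$: every application of inversion shifts the weight by $2\cdot(\text{number of derivatives})$, so one must set up the chain so that the weight after the last inversion lands on $k$ (first estimate) or $k+2l$ (second estimate). Once one recognizes that the $l$-fold loss of derivatives of $\tilde{S}_t - \id$ becomes, after the two inversions surrounding it, a $2l$-fold loss of weight, the rest of the argument is purely formal composition of the estimates already established.
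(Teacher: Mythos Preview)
Your proof is correct and follows essentially the same route as the paper: reduce to $r=1$ via the scaling $m_r$, then chase $f$ through the composition $\rho_B^*\circ\iota^*\circ\tilde{S}_t\circ\rho^*\circ\epsilon$ using, in turn, Lemma~\ref{lemma: extension}, Proposition~\ref{proposition: inversion}, Lemma~\ref{eq: first smoothing original} (resp.\ Lemma~\ref{eq: second smoothing original}), and Corollary~\ref{corollary: inversion r}, with exactly the same index bookkeeping $k+2l+2n=k+2(n+l)$. Your justification of the identity $(S_{t,1}-\id)=\rho_B^*\circ\iota^*\circ(\tilde{S}_t-\id)\circ\rho^*\circ\epsilon$ via $\rho^2=\id$ and $\epsilon(f)|_{\overline{B}_1}=f$ is precisely what the paper uses implicitly.
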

\begin{proof}
By applying Corollary \ref{corollary: inversion r},  Lemma \ref{eq: first smoothing original}, Proposition \ref{proposition: inversion} and Lemma \ref{lemma: extension}, we obtain the following estimate for $S_{t,1}$:
\begin{align*}
    \fnorm{S_{t,1}(f)}_{n+l,k,1} = &\   \fnorm{\rho_B^* \circ \iota^* \circ \tilde{S}_{t}\circ \rho^*\circ  \epsilon (f)}_{n+l,k,1}\\
    \le &\  C \norm{\tilde{S}_{t}\circ \rho^*\circ \epsilon (f)}_{n+l,k+2(n+l)}\\
    \le &\  C  t^{l}\norm{\rho^*\circ \epsilon (f)}_{n,k+2(n+l)}\\
    \le&\ C  t^{l}\fnorm{\epsilon(f)}_{n,k+2l}\le C  t^{l}\fnorm{f}_{n,k+2l,1} 
\end{align*}
Similarly, by using Lemma \ref{eq: second smoothing original} instead of Lemma \ref{eq: first smoothing original}, we obtain that
\begin{align*}
    \fnorm{(S_{t,1}-\id)(f)}_{n,k+2l,1}= &\   \fnorm{\rho_B^* \circ \iota^* \circ (\tilde{S}_{t}-\id) \circ \rho^* \circ \epsilon (f)}_{n,k+2l,1}\\
    \le &\  C  \norm{(\tilde{S}_{t}-\id)\circ \rho^*\circ \epsilon (f)}_{n,k+2(n+l)}\\
    \le &\  C t^{-l}\norm{\rho^*\circ \epsilon (f)}_{n+l,k+2(n+l)}\\
    \le &\  C t^{-l}\fnorm{\epsilon(f)}_{n+l,k}\le C t^{-l}\fnorm{f}_{n+l,k,1}
\end{align*}
In order to obtain the bounds for $S_{t,r}$ note that for $f\in C^{\infty}_0(\overline{B}_R)$:
\[ \fnorm{f\circ m_r}_{n,k,R/r}\le \mathrm{max}(1,r^n) r^{k}\fnorm{f}_{n,k,R}\]
and we have a similar inequality for $m_{1/r}$. This shows that $C=C(k,l,n, r)$ can be taken to depend continuously on $0<r$.
\end{proof}

\subsubsection{Smoothing in the weights}
To define the operators $T_{s,r}:C^{\infty}_0(\overline{B}_r)\to C^{\infty}_0(\overline{B}_r)$ we proceed similarly as above. First we define $T_{s,1}$, for $s>0$, by
\begin{equation*}
    \begin{array}{cccc}
        T_{s,1}:&C^{\infty}_0(\overline{B}_1)&\to & C^{\infty}_0(\overline{B}_1)  \\
         & f&\mapsto &\chi( s |x|)f(x) 
    \end{array}
\end{equation*}
where $\chi\in C^{\infty}(\R)$ is such that
\begin{align*}
    \chi(u)=\begin{cases}
    0&\text{ if } u\le 1\\
    1&\text{ if } 2\le u
    \end{cases}
\end{align*}
The operators $T_{s,r}$ are then given by
\begin{equation*}
\begin{array}{cccc}
T_{s,r}: & C^{\infty}_0(\overline{B}_r)&\to & C^{\infty}_0(\overline{B}_r)\\
&f &\mapsto& m_{\frac{1}{r}}^*\circ T_{s,1} \circ m_r^*(f)
\end{array}
\end{equation*}
For these operators, we obtain the following estimates.
\begin{proposition}\label{proposition: weight smoothing}
The operators $T_{s,r}$ satisfy:
\begin{align*}
    \fnorm{T_{s,r}(f)}_{n,k+j,r} \le &\ C  s^{j}\fnorm{f}_{n,k,r}\\
    \fnorm{(T_{s,r}-\id)(f)}_{n,k,r} \le &\  C  s^{-j}\fnorm{f}_{n,k+j,r}
\end{align*}
for all $f\in C^{\infty}_0(\overline{B}_r)$, all $j,k,n\in \N_0$ and all $s>0$ with $C=C(j,k,n,r)$ depending continuously on $r>0$.
\end{proposition}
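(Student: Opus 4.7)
First I would reduce to the case $r=1$. Since $T_{s,r}=m_{1/r}^{*}\circ T_{s,1}\circ m_{r}^{*}$, the scaling behavior of the norms $\fnorm{\cdot}_{n,k,r}$ under $m_r$ (as already used in the proof of Proposition \ref{proposition: derivatiave smoothing}) reduces both estimates on $\overline{B}_r$ to the corresponding estimates for $T_{s,1}$ on $\overline{B}_1$, with a constant depending continuously on $r>0$. For $s\leq 1$ we have $T_{s,1}\equiv 0$ on $\overline{B}_1$ (the bump $\chi(s|x|)$ vanishes on $\overline{B}_1$), so both estimates are trivial; hence I may assume $s>1$.

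The key geometric facts I would record are: the function $\chi(s|x|)$ is supported in $\{|x|\geq 1/s\}$ and the function $\chi(s|x|)-1$ is supported in $\{|x|\leq 2/s\}$; moreover, by induction using the chain rule one checks that for every multi-index $b$ with $|b|\geq 1$
\[
|D^{b}\chi(s|x|)|\leq C_{b}\, s^{|b|},\qquad \supp D^{b}\chi(s|\cdot|)\subset \{1/s\leq |x|\leq 2/s\},
\]
because the only derivatives of $|x|$ that enter carry negative powers of $|x|$, but on the support $|x|\sim 1/s$.

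For the first estimate, I would apply the Leibniz rule to $D^{a}(\chi(s|x|)f(x))$ with $|a|\leq n$ and split into the $b=0$ term and the $b\neq 0$ terms. On the support $|x|\geq 1/s$, the $b=0$ term gives
\[
|x|^{-(k+j)}|\chi(s|x|)||D^{c}f(x)|\leq |x|^{-j}\cdot|x|^{-k}|D^{c}f(x)|\leq s^{j}\fnorm{f}_{n,k,1}.
\]
For a term with $|b|\geq 1$, the support gives $|x|\leq 2/s$, hence $s^{|b|}\leq 2^{|b|}|x|^{-|b|}$; combining with flatness via $|D^{c}f(x)|\leq |x|^{k+|b|+j}\fnorm{f}_{|c|,k+|b|+j,1}$ and then Corollary \ref{corollary: degree shift} yields
\[
s^{|b|}|x|^{-(k+j)}|D^{c}f(x)|\leq 2^{|b|}|x|^{-(k+|b|+j)}\cdot|x|^{k+|b|+j}\fnorm{f}_{|c|,k+|b|+j,1}\leq C\,s^{j}\fnorm{f}_{n,k,1},
\]
after using $|x|^{j}\leq (2/s)^{j}$ on the support. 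Summing over $b+c=a$ gives estimate~1.

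For the second estimate, $(T_{s,1}-\id)f=(\chi(s|x|)-1)f$ is supported in $\{|x|\leq 2/s\}$. The $b=0$ Leibniz term is bounded by
\[
|x|^{-k}|\chi(s|x|)-1||D^{c}f(x)|\leq |x|^{j}\cdot |x|^{-(k+j)}|D^{c}f(x)|\leq (2/s)^{j}\fnorm{f}_{n,k+j,1},
\]
while for $|b|\geq 1$, using the tighter support $|x|\in[1/s,2/s]$, I would estimate
\[
s^{|b|}|x|^{j}\leq s^{|b|}(2/s)^{j}=2^{j}s^{|b|-j},
\]
which by itself is too large when $|b|>j$. I would then trade additional derivatives for additional weights in $f$: using $|D^{c}f(x)|\leq |x|^{k+|b|+j}\fnorm{f}_{|c|,k+|b|+j,1}$ together with $s^{|b|}|x|^{|b|}\leq 2^{|b|}$ on the support yields
\[
s^{|b|}|x|^{-k}|D^{c}f(x)|\leq 2^{|b|+j}s^{-j}\fnorm{f}_{|c|,k+|b|+j,1}\leq C\,s^{-j}\fnorm{f}_{n,k+j,1},
\]
again by Corollary \ref{corollary: degree shift}. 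Summing over $b+c=a$ completes estimate~2.

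The main obstacle is the handling of the higher Leibniz terms with $|b|\geq 1$: the bump derivatives contribute a factor $s^{|b|}$ which is not, by itself, compatible with either of the target exponents $s^{j}$ or $s^{-j}$. The trick is to use \emph{both} pieces of the support information on $D^{b}\chi(s|\cdot|)$ together with flatness of $f$, via Corollary \ref{corollary: degree shift}, so that the unwanted factor $s^{|b|}$ is compensated by a matching negative power of $|x|^{|b|}$ coming from the Taylor-like bound on $|D^{c}f(x)|$.
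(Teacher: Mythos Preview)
Your overall strategy is right and essentially matches the paper's: reduce to $r=1$, apply the Leibniz rule to $\chi(s|x|)f(x)$ (resp.\ $(\chi(s|x|)-1)f(x)$), and exploit the support of $\chi$ and its derivatives together with flatness of $f$. Your treatment of the second inequality is correct, as is the $b=0$ term of the first.

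There is, however, a genuine gap in the first inequality for the terms with $|b|\geq 1$. Your displayed chain
\[
s^{|b|}|x|^{-(k+j)}|D^{c}f(x)|\leq 2^{|b|}|x|^{-(k+|b|+j)}\cdot|x|^{k+|b|+j}\fnorm{f}_{|c|,k+|b|+j,1}\leq C\,s^{j}\fnorm{f}_{n,k,1}
\]
collapses in the middle to $2^{|b|}\fnorm{f}_{|c|,k+|b|+j,1}$, a quantity that carries no $s$ at all; so the last step is simply false, and the remark ``after using $|x|^{j}\leq (2/s)^{j}$'' cannot help (there is no $|x|^{j}$ left, and in any case that bound produces $s^{-j}$, not $s^{j}$). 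The issue is that you only invoked the \emph{upper} support bound $|x|\leq 2/s$; the $s^{j}$ factor must come from the \emph{lower} bound $|x|\geq 1/s$ on $\supp D^{b}\chi(s|\cdot|)$. The easy fix is to take the flatness bound with weight $k+|b|$ instead of $k+|b|+j$:
\[
s^{|b|}|x|^{-(k+j)}|D^{c}f(x)|\leq 2^{|b|}|x|^{-|b|}\,|x|^{-(k+j)}\,|x|^{k+|b|}\fnorm{f}_{|c|,k+|b|,1}=2^{|b|}|x|^{-j}\fnorm{f}_{|c|,k+|b|,1}\leq 2^{|b|}s^{j}\fnorm{f}_{|c|,k+|b|,1},
\]
and then Corollary~\ref{corollary: degree shift} gives $\fnorm{f}_{|c|,k+|b|,1}\leq C\fnorm{f}_{n,k,1}$.

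For comparison, the paper packages both support bounds into the single inequality $(s|x|)^{l}|D^{a}(\chi(s|x|))|\leq C|x|^{-|a|}$ (valid for $|a|>0$ and all $l\in\mathbb{Z}$, and for $a=0$ when $l\leq 0$), then factors out $s^{j}$ at the outset via $|x|^{-(k+j)}=s^{j}|x|^{-k}(s|x|)^{-j}$. This avoids the case split you do and treats all Leibniz terms uniformly; your approach is equivalent once the above correction is made.
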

\begin{proof}
We proof the statement for $T_{s,1}$, from which the general case $T_{s,r}$ follows as in the proof of the previous proposition. 

For $a\in \N_0^m$ with $|a|=n>0$, and $l\in \mathbb{Z}$ we have:
\begin{align}\label{eq: boundedness}
    (s|x|)^{l}|D^{a}(\chi(s|x|))| \le &\ C|x|^{-n}\sum_{1\le |b|\le n}(s|x|)^{|b|+l}|(D^{b}\chi)(s|x|)|\leq C |x|^{-n}, 
\end{align}
where we use that, for all $i\in \Z$, $b \in \N_0^m$ with $|b|>0$: 
\[ \sup_{y\in \R }|y|^i|D^{b}\chi(y)| <\infty.\]
Note that \eqref{eq: boundedness} holds also for $a=0$ if $l\leq 0$.


Let $j,k,n\in \N_0 $ and $a\in \N_0^m$ with $|a|=n$. By using the above inequality and 
the Taylor formula with integral remainder from \eqref{eq: taylor}, we obtain:
\begin{align*}
    |x|&\,^{-k-j}|D^aT_{s,1}(f)(x)|\le |x|^{-k-j} \sum_{a^1+a^2=a}|D^{a^1}(\chi(s|x|))||D^{a^2}f(x)|\\
    = &\ s^{j}|x|^{-k}\sum_{a^1+a^2=a}(s|x|)^{-j}|D^{a^1}(\chi(s|x|))||D^{a^2}f(x)|\\
    \le &\ C s^{j}|x|^{-k}\sum_{a^1+a^2=a}|x|^{-|a^1|}|D^{a^2}f(x)|\\    
    \le &\ Cs^{j} \Big(\fnorm{f}_{n,k,1}+\sum_{\substack{a^1+a^2=a\\0<|b|=|a^1|}}\int_0^1(u|x|)^{-k}u^{k}(1-u)^{|a^1|-1}|D^{a^2+b}(f)(ux)|\dif u \Big)\\
    \le &\ C s^{j}\fnorm{f}_{n,k,1},
\end{align*}
for all $f\in C^{\infty}_0(\overline{B}_1)$. This implies the first inequality. 

For the second inequality, we write 
\[(T_{s,1}f-f)(x)=\kappa(s|x|) f(x),\quad \textrm{where}\quad \kappa(u):=1- \chi(u).\] 
Note that \eqref{eq: boundedness} holds with $\chi$ replaced by $\kappa$, for all $a$ with $|a|=n>0$ and all $l\in \mathbb{Z}$, and for $a=0$ it holds for $l\geq 0$. Therefore we can apply the same steps as in the proof of the first part, but with $-j\leq 0$ instead of $j\geq 0$, and we obtain the second inequality from the statement. 
\end{proof}

\subsubsection{Combined smoothing operators}\label{section:combined:smoothing}

By composing the operators $S_{t,r}$ and $T_{s,r}$, we obtain the smoothing operators $S_{t,s,r}$ for $s,t,r>0$ via: 
\[S_{t,s,r}:=T_{s,r} \circ S_{t,r} :  C^{\infty}_0(\overline{B}_r)\to C^{\infty}_0(\overline{B}_r).\]
For these operators we obtain the estimates
\begin{corollary}\label{corollary: smoothing flat functions}
The operators $S_{t,s,r}$ satisfy
\begin{align*}
    \fnorm{S_{t,s,r}(f)}_{n+l,k+j,r} \le &\ C  s^jt^{l}\fnorm{f}_{n,k+2 l,r}\\
    \fnorm{(S_{t,s,r}-\id)(f)}_{n,k+2l,r} \le &\  C  (t^{-l}\fnorm{f}_{n+l,k,r} +s^{-j}\fnorm{f}_{n,k+2l+j,r})
\end{align*}
for all $f\in C^{\infty}_0(\overline{B}_r)$, all $j,k,l,n\in \N_0$, all $s>0$ and $t\geq 1$, with $C=C(j,k,n,l,r)$ depending continuously on $r>0$.
\end{corollary}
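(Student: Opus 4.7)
The plan is to prove the corollary by a direct composition argument, using the two propositions on $S_{t,r}$ and $T_{s,r}$ established immediately before. Since $S_{t,s,r}=T_{s,r}\circ S_{t,r}$, the first inequality is a straightforward two-step estimate, while the second requires a standard ``add and subtract the identity'' trick.

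For the first estimate, I would simply chain the bounds. Apply Proposition \ref{proposition: weight smoothing} with the pair $(n+l,k,j)$ to the function $S_{t,r}(f)$, obtaining
\[\fnorm{S_{t,s,r}(f)}_{n+l,k+j,r}=\fnorm{T_{s,r}(S_{t,r}(f))}_{n+l,k+j,r}\le C\, s^{j}\fnorm{S_{t,r}(f)}_{n+l,k,r},\]
and then invoke Proposition \ref{proposition: derivatiave smoothing} to bound $\fnorm{S_{t,r}(f)}_{n+l,k,r}\le C\,t^{l}\fnorm{f}_{n,k+2l,r}$. Composing the two constants (which depend continuously on $r$) yields exactly the stated inequality.

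For the second estimate, I would decompose
\[S_{t,s,r}-\id=T_{s,r}\circ S_{t,r}-\id=T_{s,r}\circ(S_{t,r}-\id)+(T_{s,r}-\id),\]
apply the triangle inequality for $\fnorm{\cdot}_{n,k+2l,r}$, and estimate each piece separately. For the first piece, use Proposition \ref{proposition: weight smoothing} with $j=0$ (so $T_{s,r}$ is just bounded on $\fnorm{\cdot}_{n,k+2l,r}$ by a constant) and then Proposition \ref{proposition: derivatiave smoothing}:
\[\fnorm{T_{s,r}(S_{t,r}-\id)(f)}_{n,k+2l,r}\le C\fnorm{(S_{t,r}-\id)(f)}_{n,k+2l,r}\le C\, t^{-l}\fnorm{f}_{n+l,k,r}.\]
For the second piece, apply the approximation estimate of Proposition \ref{proposition: weight smoothing} directly:
\[\fnorm{(T_{s,r}-\id)(f)}_{n,k+2l,r}\le C\, s^{-j}\fnorm{f}_{n,k+2l+j,r}.\]
Summing these two bounds gives the claim.

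There is no real obstacle here: this is a clean corollary of the two preceding propositions, and the only mild care needed is to track that the new constant still depends continuously on $r>0$ (which follows since both constituent constants do, and they enter only multiplicatively or additively). The decomposition $T_{s,r}\circ S_{t,r}-\id=T_{s,r}\circ(S_{t,r}-\id)+(T_{s,r}-\id)$ is the only non-mechanical step, and it is exactly the standard way to combine two independent smoothing families when one wants the approximation error to decouple into one term per parameter.
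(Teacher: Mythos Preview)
Your proof is correct and follows exactly the paper's approach: the first inequality is obtained by chaining Propositions \ref{proposition: weight smoothing} and \ref{proposition: derivatiave smoothing}, and the second by the same decomposition $S_{t,s,r}-\id=T_{s,r}\circ(S_{t,r}-\id)+(T_{s,r}-\id)$ followed by the triangle inequality and the two propositions. You have in fact spelled out more detail than the paper does.
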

\begin{proof}
For the first inequality we may directly apply Proposition \ref{proposition: derivatiave smoothing} and Proposition \ref{proposition: weight smoothing}. To prove the second inequality we use that
\begin{align*}
    \fnorm{(S_{t,s,r}-\id)(f)}_{n,k+2l,r} \le &\ \fnorm{(T_{s,r}\circ (S_{t,r}-\id)(f)}_{n,k+2l,r}\\
    &\ +\fnorm{(T_{s,r}-\id)(f)}_{n,k+2l,r}.
\end{align*} 
Now applying again Proposition \ref{proposition: derivatiave smoothing} and \ref{proposition: weight smoothing} yields the result.
\end{proof}

\subsection{Interpolation inequalities}\label{section: interpolation}

First recall the interpolation inequalities with respect to the standard $C^n$ norms $\norm{\cdot}_{n,r}$ for functions on $\overline{B}_r$ (see e.g., \cite{Ham}):
\begin{lemma}
We have that
\begin{align}\label{eq: standard interpolation}
\norm{f}_{n,r}\le &\  C \norm{f}^{\frac{l_2}{l_1+l_2}}_{n-l_1,r}\norm{f}^{\frac{l_1}{l_1+l_2}}_{n+l_2,r}
\end{align}
for all $f\in C^{\infty}_0(\overline{B}_r)$, all $l_1, l_2, n\in \N_0$, with $l_1\leq n$, with $C=C(n,l_1,l_2,r)$ depending continuously on $r>0$.
\end{lemma}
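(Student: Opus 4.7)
I would prove the inequality by first establishing the consecutive case $l_1=l_2=1$ and then iterating via discrete log-convexity; the cases $l_1=0$ or $l_2=0$ are trivial. The consecutive case asserts that there exists $C=C(n,r)$, continuous in $r>0$, with
\[
\norm{f}_{n,r}^{\,2} \le C\,\norm{f}_{n-1,r}\,\norm{f}_{n+1,r}, \qquad n\ge 1.
\]

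To prove it I would first extend $f$ to the whole space in order to bypass the obstruction that moving in a fixed coordinate direction inside $\overline{B}_r$ can be blocked by the boundary. The standard $C^n$-analogue of Lemma \ref{lemma: extension} (a Whitney--Seeley extension, see \cite{Ham}) supplies $\tilde f \in C^{\infty}_c(\R^m)$ extending $f$ and satisfying $\norm{\tilde f}_{k,\R^m}\le C_k(r)\,\norm{f}_{k,r}$ for every $k$, with constants continuous in $r$. For any multi-index $a\in\N_0^m$ with $|a|=n-1$, any index $i$, and $g:=D^a \tilde f$, Taylor's formula yields, for every $h>0$,
\[
h\,\partial_i g(x) \;=\; g(x+he_i)-g(x)-\int_0^h (h-t)\,\partial_i^2 g(x+te_i)\,dt,
\]
whence $h\,|\partial_i g(x)|\le 2\norm{g}_{0,\R^m}+(h^2/2)\,\norm{\partial_i^2 g}_{0,\R^m}$. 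Optimizing over $h>0$ produces the Kolmogorov--Hadamard bound $|\partial_i g(x)|\le 2\sqrt{\norm{g}_{0,\R^m}\,\norm{\partial_i^2 g}_{0,\R^m}}$. Taking suprema over $x,a,i$ and using monotonicity of $\norm{\cdot}_{k,\R^m}$ in $k$ gives $\norm{\tilde f}_{n,\R^m}^{\,2} \le 4\,\norm{\tilde f}_{n-1,\R^m}\,\norm{\tilde f}_{n+1,\R^m}$; combining with the extension estimate and the trivial $\norm{f}_{n,r}\le \norm{\tilde f}_{n,\R^m}$ delivers the consecutive inequality.

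For the iteration, I would set $A_k:=\log\norm{f}_{k,r}$. The consecutive case reads $2A_k\le A_{k-1}+A_{k+1}+B$ for $k$ in the finite range $[n-l_1,\,n+l_2-1]$, where $B$ is the maximum of $\log C(k,r)$ over that range. The modified sequence $\tilde A_k := A_k+(B/2)\,k^2$ then satisfies $2\tilde A_k\le \tilde A_{k-1}+\tilde A_{k+1}$, i.e.\ is discretely convex, and hence $\tilde A_n$ lies below the chord joining $\tilde A_{n-l_1}$ and $\tilde A_{n+l_2}$:
\[
\tilde A_n \;\le\; \frac{l_2}{l_1+l_2}\,\tilde A_{n-l_1}+\frac{l_1}{l_1+l_2}\,\tilde A_{n+l_2}.
\]
Unwinding the substitution and exponentiating recovers the lemma, with the additive correction $(B/2)[\tfrac{l_2}{l_1+l_2}(n-l_1)^2+\tfrac{l_1}{l_1+l_2}(n+l_2)^2-n^2]$ absorbed into a new multiplicative constant $C(n,l_1,l_2,r)$. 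The only real obstacle is tracking that this constant stays finite and continuous in $r>0$, which is routine: the Whitney--Seeley extension can be made $r$-continuous by composing a fixed extension on $\overline{B}_1$ with the rescaling $m_r$ (as in Proposition \ref{proposition: derivatiave smoothing}), and the $\R^m$-step is entirely $r$-independent.
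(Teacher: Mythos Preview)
Your proof is correct. The paper does not actually prove this lemma: it is stated as a known interpolation inequality and attributed to Hamilton \cite{Ham} without argument. So there is no ``paper's own proof'' to compare against in detail.

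That said, it is worth noting that the approach implicit in the citation differs from yours. Hamilton's proof (and the one the paper adopts verbatim for the companion weight-index interpolation \eqref{eq: interpolation s}) goes through smoothing operators: one writes $f=S_t f+(\id-S_t)f$, uses the two smoothing estimates to bound $\norm{f}_{n,r}\le C(t^{l_1}\norm{f}_{n-l_1,r}+t^{-l_2}\norm{f}_{n+l_2,r})$, and then optimizes over $t$. Your route via Kolmogorov--Landau on the whole line plus discrete log-convexity is the other classical proof; it is more elementary in that it needs no smoothing machinery, but it does require an extension operator to escape the boundary, which you handle correctly. Both approaches give constants continuous in $r$ for the same reason (rescaling to the unit ball).

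Two small cosmetic remarks: the convexity condition $2A_k\le A_{k-1}+A_{k+1}+B$ is needed only at the interior points $k\in\{n-l_1+1,\dots,n+l_2-1\}$, not on $[n-l_1,n+l_2-1]$ as written; and to make the argument literally complete you should dispose of the case $\norm{f}_{n-l_1,r}=0$ (which forces $f\equiv 0$) before taking logarithms. Neither affects the substance.
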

The equivalence of norms from Lemma \ref{lemma: equivalence} implies
\begin{corollary}
We have that
\begin{align}\label{eq: interpolation t}
    \fnorm{f}_{n,k,r}\le \ C \fnorm{f}^{\frac{l_2}{l_1+l_2}}_{n-l_1,k,r}\fnorm{f}^{\frac{l_1}{l_1+l_2}}_{n+l_2,k,r}
\end{align}
for all $f\in C^{\infty}_0(\overline{B}_r)$, all $k,l_1, l_2, n\in \N_0$, with $l_1\leq n$, with $C=C(n,k,l_1,l_2,r)$ depending continuously on $r>0$.
\end{corollary}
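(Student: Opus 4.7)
My plan is to deduce this interpolation inequality for the weighted flat norms $\fnorm{\cdot}_{n,k,r}$ directly from the corresponding inequality \eqref{eq: standard interpolation} for the unweighted $C^n$-norms, by routing everything through the equivalent norms $|\cdot|_{n,k,r}$ of Lemma \ref{lemma: equivalence}. The whole point of that equivalence is precisely to reduce questions about the norms $\fnorm{\cdot}_{n,k,r}$ to questions about ordinary $C^n$-norms applied to the auxiliary function $|x|^{-k}f$, which is smooth (and in fact flat) on $\overline{B}_r$ whenever $f \in C^{\infty}_0(\overline{B}_r)$.

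First I would fix $f \in C^{\infty}_0(\overline{B}_r)$ and $k \in \N_0$, and consider $g := |x|^{-k}f$. By the definition \eqref{eq: flat norms 2} of $|\cdot|_{n,k,r}$ we have $|f|_{n,k,r} = \norm{g}_{n,r}$ for every $n$. I then apply the standard interpolation inequality \eqref{eq: standard interpolation} to $g$ with parameters $l_1 \le n$ and $l_2$:
\begin{equation*}
\norm{g}_{n,r} \le C\, \norm{g}_{n-l_1,r}^{\frac{l_2}{l_1+l_2}} \norm{g}_{n+l_2,r}^{\frac{l_1}{l_1+l_2}},
\end{equation*}
which immediately translates into the same inequality for $|\cdot|_{n,k,r}$:
\begin{equation*}
|f|_{n,k,r} \le C\, |f|_{n-l_1,k,r}^{\frac{l_2}{l_1+l_2}} |f|_{n+l_2,k,r}^{\frac{l_1}{l_1+l_2}}.
\end{equation*}

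Finally I would apply Lemma \ref{lemma: equivalence} three times: the upper bound $\fnorm{f}_{n,k,r} \le C |f|_{n,k,r}$ on the left, and the lower bounds $|f|_{n-l_1,k,r} \le C \fnorm{f}_{n-l_1,k,r}$ and $|f|_{n+l_2,k,r} \le C \fnorm{f}_{n+l_2,k,r}$ on the right. The resulting composite constant still depends only on $n,k,l_1,l_2,r$, and continuously on $r$, inheriting these dependencies from Lemma \ref{lemma: equivalence} and from the standard interpolation inequality. There is no real obstacle: the only thing worth double-checking is that the constants supplied by Lemma \ref{lemma: equivalence} depend continuously on $r$ in the regimes we need, which is clear from the Taylor-with-remainder and Leibniz-rule arguments used in its proof.
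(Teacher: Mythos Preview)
Your proposal is correct and is exactly the argument the paper gives: apply the standard interpolation inequality to $g=|x|^{-k}f$ and then use the norm equivalence of Lemma~\ref{lemma: equivalence}. One very minor remark: the constants in Lemma~\ref{lemma: equivalence} are in fact independent of $r$ (they are stated as $C=C(k,n)$), so your worry about continuous dependence on $r$ for those constants is moot; the $r$-dependence in the final constant comes solely from the standard interpolation inequality \eqref{eq: standard interpolation}.
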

\begin{proof}
The result follows by applying the previous lemma to the function $|x|^{-k}f(x)$ and Lemma \ref{lemma: equivalence}.
\end{proof}

We also have the interpolation inequalities in the weight index, which we prove using the corresponding smoothing operators:
\begin{lemma}
We have that
\begin{align}\label{eq: interpolation s}
    \fnorm{f}_{n,k,r}\le &\ C \fnorm{f}^{\frac{j_2}{j_1+j_2}}_{n,k-j_1,r}\fnorm{f}^{\frac{j_1}{j_1+j_2}}_{n,k+j_2,r}
\end{align}
for all $f\in C^{\infty}_0(\overline{B}_r)$, all $k,j_1, j_2, n\in \N_0$, with $j_1\leq k$, where the constants $C=C(n,k,j_1,j_2,r)$ depend continuously on $r>0$.
\end{lemma}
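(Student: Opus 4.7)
The plan is to prove this interpolation inequality using the smoothing operators $T_{s,r}$ from Proposition \ref{proposition: weight smoothing}, following the same pattern as Nash-Moser-style interpolation arguments (cf.\ also how \eqref{eq: standard interpolation} yields \eqref{eq: interpolation t}). The strategy exploits the two estimates of $T_{s,r}$, which shift weights up at a controllable cost and approximate the identity with a controlled error in the opposite direction.

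More concretely, I would first write the decomposition
\[
    f = T_{s,r}(f) + (\id - T_{s,r})(f)
\]
for any parameter $s>0$, and apply the triangle inequality to obtain
\[
    \fnorm{f}_{n,k,r} \le \fnorm{T_{s,r}(f)}_{n,k,r} + \fnorm{(\id-T_{s,r})(f)}_{n,k,r}.
\]
Then I invoke Proposition \ref{proposition: weight smoothing}: the first inequality there, applied with $k$ replaced by $k-j_1$ and shift $j_1$, gives
\[
    \fnorm{T_{s,r}(f)}_{n,k,r} \le C\, s^{j_1}\, \fnorm{f}_{n,k-j_1,r};
\]
the second inequality, applied with shift $j_2$, gives
\[
    \fnorm{(\id-T_{s,r})(f)}_{n,k,r} \le C\, s^{-j_2}\, \fnorm{f}_{n,k+j_2,r}.
\]
Here the hypothesis $j_1 \le k$ is exactly what is needed so that $k-j_1 \in \N_0$ and the norm $\fnorm{f}_{n,k-j_1,r}$ makes sense.

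The final step is to optimize over $s>0$. Denoting $A := \fnorm{f}_{n,k-j_1,r}$ and $B := \fnorm{f}_{n,k+j_2,r}$, I have $\fnorm{f}_{n,k,r} \le C(s^{j_1}A + s^{-j_2}B)$, and choosing $s$ such that $s^{j_1+j_2} = B/A$ (when $j_1+j_2 > 0$ and both $A, B$ are positive and finite) balances the two terms, yielding
\[
    \fnorm{f}_{n,k,r} \le C\, A^{j_2/(j_1+j_2)}\, B^{j_1/(j_1+j_2)},
\]
which is the desired bound. The degenerate cases are either trivial (if $j_1=0$ or $j_2=0$ the inequality reduces to $\fnorm{f}_{n,k,r}\le C\fnorm{f}_{n,k,r}$) or immediate by taking $s \to 0$ or $s \to \infty$ (if $A=0$ or $B=0$ one forces $f \equiv 0$ on the relevant region; if one side is infinite the bound holds vacuously). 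There is no substantive obstacle here — once the smoothing inequalities from Proposition \ref{proposition: weight smoothing} are in hand, the argument is a textbook optimization and the only care needed is in handling the boundary cases to ensure the inequality is stated cleanly for all admissible integer parameters.
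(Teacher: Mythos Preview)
Your proposal is correct and matches the paper's proof essentially line for line: both split $f = T_{s,r}(f) + (\id - T_{s,r})(f)$, apply the two estimates of Proposition~\ref{proposition: weight smoothing}, and then choose $s = (\fnorm{f}_{n,k-j_1,r}^{-1}\fnorm{f}_{n,k+j_2,r})^{1/(j_1+j_2)}$ to balance the terms. Your treatment of the degenerate cases is in fact more explicit than the paper's, which simply states the optimal $s$ without comment.
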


\begin{proof}
The argument is standard (see e.g.\ \cite[Corollary III.1.4.2]{Ham}). By Proposition \ref{proposition: weight smoothing}:
\begin{align*}
    \fnorm{f}_{n,k,r}\le &\ \fnorm{T_{s,r}(f)}_{n,k,r} +\fnorm{(T_{s,r}-\id)(f)}_{n,k,r}\\
    \le &\   C ( s^{j_{1}}\fnorm{f}_{n,k-j_1,r} +s^{-j_2}\fnorm{f}_{n,k+j_2,r}). 
\end{align*}
The result follows by setting 
\[ s:= (\fnorm{f}_{n,k-j_1,r}^{-1}\fnorm{f}_{n,k+j_2,r})^{\frac{1}{j_1+j_2}}.\qedhere\]
\end{proof}

\bibliographystyle{alpha}
\bibliography{Bibtex}

\Addresses

\end{document}